\documentclass[11pt]{amsart}  	% use "amsart" instead of "article" for AMSLaTeX format
\usepackage[margin=2.5cm]{geometry}

\usepackage[all]{xy}						% TeX will automatically convert eps --> pdf in pdflatex		
\usepackage{bbm}
\usepackage{amssymb}
\usepackage{amscd,latexsym,amsthm,amsfonts,amssymb,amsmath,amsxtra}
\usepackage[mathscr]{eucal}
\usepackage[colorlinks]{hyperref}
\usepackage{hyperref}
\usepackage{cleveref}
\usepackage{mathtools}
\usepackage{cite}
\usepackage{chngcntr}
\usepackage{comment}
\usepackage{pictexwd,dcpic}
\numberwithin{equation}{subsection}
\newcounter{keepeqno}

\newcommand{\trivial}[2][]{\if\relax\detokenize{#1}\relax {\color{red} \vspace{0em} {[} #2 {]}} \else\ifx#1h\ifcsname showtrivial\endcsname{\color{orange} \vspace{0em} {[}  #2 {]}}\fi\else {\red Wrong argument!} \fi\fi}

\hypersetup{linkcolor=black, citecolor=black}

\pagestyle{plain}
\setcounter{secnumdepth}{2}

\pagestyle{headings}

%\date{}							% Activate to display a given date or no date

\newcommand{\BC}{{\mathbb {C}}}

\newcommand{\BG}{{\mathbb {G}}}

\newcommand{\BR}{{\mathbb {R}}}

\newcommand{\BZ}{{\mathbb {Z}}}

\newcommand{\CF}{{\mathcal {F}}}
\newcommand{\CG}{{\mathcal {G}}}
\newcommand{\CH}{{\mathcal {H}}}

\newcommand{\CS}{{\mathcal {S}}}

\newcommand{\WW}{{\mathrm {W}}}

\newcommand{\Fw}{{\mathfrak {w}}}

\newcommand{\RM}{{\mathrm {M}}}

\newcommand{\RO}{{\mathrm {O}}}

\newcommand{\Cent}{{\mathrm{Cent}}}

\newcommand{\disc}{{\mathrm{disc}}}

\newcommand{\Gal}{{\mathrm{Gal}}}
\newcommand{\GL}{{\mathrm{GL}}}

\newcommand{\Hom}{{\mathrm{Hom}}}

\renewcommand{\Im}{{\mathrm{Im}}}
\newcommand{\ind}{{\mathrm{ind}}}
\newcommand{\Ind}{{\mathrm{Ind}}}

\newcommand{\Mp}{{\widetilde{\mathrm{Sp}}}}

\newcommand{\Or}{{\mathrm{O}}}

\newcommand{\Smod}{\mathcal{S}\mathrm{mod}}
\newcommand{\nInd}{\mathfrak{Ind}} % may delete after proofreading
\newcommand{\unind}{\mathfrak{ind}} % un-normalized Schwartz induction
\newcommand{\unInd}{\mathfrak{Ind}} % un-normalized usual induction
 % Schwartz homology

\newcommand{\diag}{{\mathrm{diag}}}

\newcommand{\SO}{{\mathrm{SO}}}

\newcommand{\Sym}{{\mathrm{Sym}}}
\newcommand{\sgn}{{\mathrm{sgn}}}
\newcommand{\Sp}{{\mathrm{Sp}}}
\newcommand{\Stab}{{\mathrm{Stab}}}

\newcommand{\Span}{{\mathrm{Span}}}
\newcommand{\gen}{\mathrm{gen}}

\newcommand{\wh}{\widehat}
\newcommand{\wt}{\widetilde}

\def\diag{{\rm diag}}

\def\std{\rm std}

 %widewb

\def\Irr{\mathrm{Irr}}

\def\t{\tilde}

\def\temp{\mathrm{temp}}

\def\Hei{\mathrm{Hei}}
\def\J{\mathrm{J}}
\def\calJ{\mathcal{J}}

\def\MVW{\mathrm{MVW}}
\def\even{\mathrm{even}}
\def\odd{\mathrm{odd}}

\newtheorem{thm}{Theorem}[subsection]
\newtheorem{defin}[thm]{Definition}
\newtheorem{rmk}[thm]{Remark}

\newtheorem{pro}[thm]{Proposition}
\newtheorem{lem}[thm]{Lemma}
\newtheorem{cor}[thm]{Corollary}

\makeatletter

\newcommand{\Rmnum}[1]{\expandafter\@slowromancap\romannumeral #1@}
\makeatother

\long\def\ccheng#1{{{\color{red}CC: #1}}}

\long\def\zjl#1{{{\color{blue}ZJL: #1}}}

%%%%%%

%\usepackage[pagewise]{lineno}\linenumbers

\begin{document}

\title{Fourier--Jacobi models for real symplectic-metaplectic groups: the basic case}

\author{Cheng Chen}
\address{Université Paris Cité, Sorbonne Université, CNRS, IMJ-PRG, Bâtiment Sophie Germain,
8 place Aurélie Nemours, F-75013 Paris, France}
\email{cheng.chen@imj-prg.fr}

\author{Rui Chen}
\address{School of Mathematical Sciences, Zhejiang University, Zijingang Campus, 866 Yuhangtang Road, Hangzhou 310058, China}
\email{rchenmat@zju.edu.cn}

\author{Jialiang Zou}
\address{
Department of Mathematics, Massachusetts Institute of Technology, 77 Massachusetts Avenue, Cambridge, MA 02139, USA
}
\email{jlzou@mit.edu}

\keywords{Gan--Gross--Prasad conjecture, Schwartz homology}
\subjclass[2020]{Primary 22E50 22E45; Secondary 20G20}

\begin{abstract}
In this paper, we generalize the method in \cite{gan2016gross}\cite{atobe2018local} to the field of real numbers and prove the basic tempered case of the local Gan-Gross-Prasad conjecture for Fourier-Jacobi models of symplectic-metaplectic groups, based on the tempered case of the conjecture for Bessel models proved in \cite{chen2022local}.
\end{abstract}

\maketitle

\tableofcontents

\section{Introduction}
The aim of this paper is to prove the basic tempered case of the local Gan-Gross-Prasad conjecture \cite{gan2012symplectic} for symplectic–metaplectic Fourier–Jacobi models over the field $\BR$.

\vskip 5pt

Let $W$ be a symplectic vector space over $\BR$. Let $\Sp(W)$ be the associated symplectic group and $\Mp(W)$ the metaplectic group, i.e., the unique non-split double cover of $\Sp(W)$. For a non-trivial additive character $\psi$ of $F$, let $\omega_{\psi}$ denote the Weil representation of $\Mp(W)$ associated with $\psi$ (see \Cref{Section Weil representation}). We set 
\[
G = \Sp(W) \times \Mp(W) \quad \mbox{and} \quad H = \Delta\Mp(W),
\]
where $\Delta\Mp(W)$ denotes the diagonal embedding of $\Mp(W)$ into $G$.
We denote by $\Irr(\Sp(W))$ the set of 
irreducible admissible Casselman–Wallach representations of $\Sp(W)$ and  $ \Irr_{\gen}(\Mp(W))$ the set of genuine irreducible admissible Casselman–Wallach representations of $\Mp(W)$. For $\pi \in \Irr(\Sp(W))$ and $\wt{\pi} \in \Irr_{\gen}(\Mp(W))$, we define the multiplicity
\begin{equation}\label{fj MODEL}
    m\!\left(\pi \boxtimes \wt{\pi}\right) 
= \dim \Hom_{H}\!\left(\pi \boxtimes \wt{\pi}, \, \omega_{\psi}\right).
\end{equation}
The well-known multiplicity-one theorem asserts that
\[
m\!\left(\pi \boxtimes \wt{\pi} \right) \leqslant 1.
\]
This theorem was proved in \cite{sun2012multiplicity,liu2013uniqueness}. The local Gan–Gross–Prasad conjecture provides a refinement of this multiplicity-one theorem, by precisely describing the behavior of 
$m\!\left(\pi \boxtimes \wt{\pi} \right)$ for $\pi, \wt{\pi}$ belonging to generic L-packets. 

\vskip 5pt

To state the precise conjecture, we first recall the local Langlands correspondence (``LLC" for short). We denote by $\WW_\BR$ the Weil group of $\BR$. For a connected reductive group $G$ over $\BR$ and a local L-parameter
\[
\phi:\WW_\BR \longrightarrow {{}^LG},
\]
there is a finite subset $\Pi_{\phi}(G)$ of $\Irr(G(\BR))$, called the L-packet of $G$ associated to $\phi$. Following the work of Vogan \cite{vogan1993local}, the Vogan L-packet attached to $\phi$ is defined as
\[
\Pi_{\phi} = \bigsqcup_{G'}\, \Pi_{\phi}(G'),
\]
where $G'$ runs over all pure inner forms of $G$.  
%It was conjectured by Vogan, and established over archimedean local fields (\cite[Theorem 6.3]{vogan1993local}), that 
The advantage of working with $\Pi_\phi$ is that, after fixing a Whittaker datum $\Fw$ for a quasi-split pure inner form $G^*$ of $G$, there exists a bijection
\[
\calJ_{\Fw}: \Pi_{\phi} \;\longleftrightarrow\; \widehat{\CS_{\phi}}, \qquad \pi \;\longleftrightarrow\; \eta_{\pi},
\]
where $\widehat{\CS_{\phi}}$ denotes the set of characters of the component group 
\[
\CS_{\phi} \coloneqq \pi_0\left(\Cent_{\wh{G}}(\Im(\phi))\right).
\]

\vskip 5pt

In \cite{adams1998genuine}, Adams and Barbasch also extended the LLC to the metaplectic group $\Mp(W)$, by using the local Shimura–Waldspurger correspondence. We briefly recall their construction. Assume that $\dim W = 2n$, and let $V^+$ be a split quadratic space over $\BR$ with $\dim V^+ = 2n+1$ and $\disc(V^+)=1$. The Shimura–Waldspurger correspondence gives a bijection
\[
\theta_{V,W,\psi}: \Irr_{\gen}(\Mp(W)) \;\longleftrightarrow\; 
\coprod_{\substack{\dim V = 2n+1 \\ \disc(V) = 1}} \Irr(\SO(V)).
\]
Note that these quadratic spaces $V$ with $\dim V = 2n+1$ and $\disc(V)=1$ exactly parametrize all pure inner forms of $\SO(V^+)$. Therefore, one can define the L-group of $\Mp(W)$ to be the same as that of $\SO(V^+)$. Given an L-parameter $\phi$ of $\Mp(W)$, one can also regard $\phi$ as an L-parameter of $\SO(V^+)$. We then define the (Vogan) L-packet to be  
\[
\Pi_{{\phi}}(\Mp(W)) = \left\{\pi\in\Irr_{\gen}(\Mp(W))\,\big|\,\theta_{V,W,\psi}(\pi)\in\Pi_{\phi}\right\}.
\]
For $\pi \in \Pi_{{\phi}}(\Mp(W))$, we set
\[
\eta_{{\pi}} := \calJ_{\Fw}\left(\theta_{V,W,\psi}(\pi)\right) \in  \widehat{\CS_{\phi}}.
\]
This provides a natural bijection
\[
\pi \in \Pi_{{\phi}}(\Mp(W)) \quad \longleftrightarrow \quad \eta_\pi \in \widehat{\CS_{{\phi}}}.
\]
The main result of this paper is the so-called ``\emph{basic tempered case}'' of the local Gan–Gross–Prasad conjecture \cite[Conjecture 17.1, 17.3]{gan2012symplectic} for symplectic–metaplectic Fourier–Jacobi models.
\begin{thm}\label{conj: GP in introduction}
The following statements hold:
\begin{enumerate}
    \item \textbf{Multiplicity-one:} For every tempered L-parameter $\phi$ of $\Sp(W)$ and every tempered L-parameter $\wt{\phi}$ of $\Mp(W)$, we have
    \[
        \sum_{\pi \in \Pi_{\phi}} \; \sum_{\wt{\pi} \in \Pi_{\wt{\phi}}} 
        m\!\left(\pi \boxtimes \wt{\pi} \right) = 1.
    \]
    \item \textbf{Epsilon-dichotomy:} The unique pair $(\pi,\wt{\pi}) \in \Pi_{\phi} \times \Pi_{\wt{\phi}}$ such that
    \[
        m\!\left(\pi \boxtimes \wt{\pi}\right) = 1
    \]
    corresponds to the distinguished characters of component groups
    \[
        \eta_{\pi} \times \eta_{\wt{\pi}} = \eta_{\wt\phi} \times \eta_{\phi},
    \]
    where $\eta_{\wt\phi} \times \eta_{\phi}$ is defined in terms of local root numbers as in (\ref{distigulished character}). 
\end{enumerate}
\end{thm}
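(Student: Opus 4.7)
The plan is to reduce the Fourier--Jacobi multiplicity problem for $(\Sp(W), \Mp(W))$ to the Bessel multiplicity problem for pairs of odd orthogonal groups, which is already known over $\BR$ by \cite{chen2022local}. The reduction follows the blueprint of \cite{gan2016gross} and \cite{atobe2018local}, combining a see-saw pair with the local theta correspondence and the Shimura--Waldspurger correspondence underlying the very definition of $\Pi_{\wt\phi}(\Mp(W))$.

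The core step is a see-saw identity. Given $\wt\pi \in \Pi_{\wt\phi}(\Mp(W))$, set $\sigma' = \theta_{V,W,\psi}(\wt\pi) \in \Irr(\SO(V))$ for the odd-dimensional quadratic $V$ with $\disc(V)=1$ on which the Shimura--Waldspurger lift of $\wt\pi$ lives. Take an auxiliary $2$-dimensional quadratic space $V^0$ so that $V' = V \oplus V^0$ has dimension $2n+3$, and form the see-saw whose vertical pairs are $\Mp(W) \times \Mp(W)$ against $\Or(V') \times \Or(V^0)$. A standard manipulation, using the fact that restriction of the Weil representation of $\Mp(W) \times \Or(V')$ to $\Mp(W) \times \Or(V) \times \Or(V^0)$ carries the Fourier--Jacobi datum to the Bessel datum built from the anisotropic part of $V^0$, converts
\[
\Hom_{\Delta \Mp(W)}\!\bigl(\pi \boxtimes \wt\pi,\ \omega_\psi\bigr)
\]
into a Bessel-type $\Hom$ space for the pair $(\SO(V'), \SO(V))$. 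Summing over $\pi \in \Pi_\phi$ and $\wt\pi \in \Pi_{\wt\phi}$ then reduces Theorem~\ref{conj: GP in introduction} to the tempered Bessel GGP statement for the $L$-parameters $(\phi, \wt\phi)$, viewed via Shimura--Waldspurger as parameters on the orthogonal side.

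Applying the main theorem of \cite{chen2022local} yields multiplicity one and identifies the unique distinguished pair by an explicit character of the orthogonal component group, given in terms of local root numbers of tensor products of $\phi$ and $\wt\phi$. To conclude part (2), the remaining task is to translate this distinguished character back to the metaplectic side: under the Shimura--Waldspurger correspondence the component groups on the two sides are naturally identified, but the corresponding distinguished characters differ by an explicit shift coming from the epsilon factors governing the theta correspondence. Chasing this shift converts the Bessel distinguished character into the character $\eta_\phi \times \eta_{\wt\phi}$ prescribed by (\ref{distigulished character}).

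The main obstacle is analytic rather than combinatorial. Over a $p$-adic field the see-saw step manipulates smooth $\Hom$ spaces freely, but over $\BR$ one must work in the Casselman--Wallach category and in particular use Schwartz homology (as signalled in the keywords) in order to commute $\Hom$ against the Weil representation with the restriction--induction operations that make the see-saw an equality rather than merely a one-sided inequality; roughly, one needs an Archimedean analogue of the vanishing of higher Ext/Tor groups that \cite{atobe2018local} uses in the $p$-adic setting. A secondary, but delicate, obstacle is the bookkeeping of normalizations: the Whittaker datum $\Fw$, the additive character $\psi$, the sign of the theta correspondence, and the Adams--Barbasch parametrization of $\Irr_{\gen}(\Mp(W))$ all feed into the character identification in part (2), and only a compatible choice on both sides of the see-saw will produce the precise epsilon-dichotomy.
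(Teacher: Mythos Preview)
Your overall blueprint---seesaw plus theta correspondence to reduce to the Bessel case of \cite{chen2022local}---is indeed the paper's strategy, but you have misidentified the real obstacle, and this is a genuine gap.

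The seesaw identity itself is not the problem: it is an honest equality of Hom spaces over $\BR$ just as over a $p$-adic field, and no Schwartz-homology or higher-Ext vanishing is needed to make it hold. The actual difficulty is that the seesaw identity involves the \emph{big} theta lift $\Theta$, not the small one $\theta$. Concretely, from the seesaw you obtain a statement like
\[
\Hom_{\Or(V')}\bigl(\Theta_{V,W,\psi}(\pi^\vee),\sigma'\bigr)\neq 0,
\]
and to apply \cite{chen2022local} you need $\Hom_{\Or(V')}\bigl(\theta_{V,W,\psi}(\pi^\vee),\sigma'\bigr)\neq 0$. Over a $p$-adic field, \cite{gan2016gross} and \cite{atobe2018local} bridge this gap by proving that $\Theta$ is already irreducible in the (almost) equal rank tempered case, using Kudla's filtration. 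Over $\BR$ this irreducibility is not known, and there is no Archimedean Kudla filtration to fall back on. Your proposal does not address this at all; the ``analytic obstacle'' you describe is not the one that actually blocks the direct port of the $p$-adic argument.

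The paper circumvents this by passing to the \emph{stable range}, where irreducibility of $\Theta$ is a theorem of Loke--Ma. The price is that the stable-range seesaw produces non-tempered Bessel and Fourier--Jacobi models (for representations $\sigma_t$, $\sigma'_t$, $\pi_t$, $\wt\pi_t$ built as Langlands quotients of long standard modules). The substantial new work, occupying Sections~5.3 and~6.3, is then an inductive ``peeling off'' argument: one shows, via Borel's lemma applied to the orbit decomposition of a flag variety under the smaller group, that non-vanishing of the Hom space at level $k+1$ forces non-vanishing at level $k$. This step uses the explicit shape of the standard modules to kill the contribution of the closed orbits (the exponents are too large to map to the given Langlands quotient), isolating the open-orbit contribution, which by Frobenius reciprocity is precisely the Hom space at the next level down. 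This is where Schwartz induction and the Archimedean analytic machinery actually enter---not in the seesaw itself.

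A minor point: your seesaw configuration with a two-dimensional $V^0$ does not have the right parities for an $\Mp(W)\times\Mp(W)$ pair. The paper uses a one-dimensional line $L$ with $\disc(L)=-1$, so that $V=V'\oplus L$ is $(2n+2)$-dimensional and even, pairing with $\Sp(W)$, while $V'$ and $L$ are odd and pair with $\Mp(W)$.
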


\vskip 5pt

We should remark that the local Gan–Gross–Prasad conjecture is originally formulated in a much more general context. There are several different settings, which can be broadly divided into \emph{Bessel models} and \emph{Fourier-Jacobi models}.  
\begin{itemize}
    \item The \textbf{Bessel models} concern special orthogonal groups and unitary groups. For special orthogonal groups, these are often referred to as the \emph{local Gross–Prasad conjecture}, originally introduced in \cite{gross1992decomposition,gross1994irreducible}.  

    \vskip 5pt
    
    \item The \textbf{Fourier–Jacobi models} concern symplectic–metaplectic groups and unitary groups. In the symplectic-metaplectic case, one can consider groups of the form $\Sp(W) \times \Mp(W')$, where $W$ and $W'$ are not necessarily of the same dimension; also, one can consider generic L-parameters instead of tempered L-parameters. The case we are considering in this paper, namely, when $W=W'$ and L-parameters are tempered, is called the basic tempered case (of Fourier-Jacobi models).
\end{itemize}
\vskip 5pt
The general form of this conjecture can be reduced to the basic tempered cases: when the base field is non-Archimedean, this was done in \cite[Theorem 15.1, 16.1]{gan2012symplectic}; when the base field is Archimedean, this was done in the work of the first author \cite{chen2023multiplicity}. 

\vskip 5pt

Over the non-Archimedean local field, the local Gan-Gross-Prasad conjecture is known in all cases, by the works of Waldspurger \cite{waldspurger2010formule,waldspurger2012calcul,waldspurger2012conjecture,waldspurger2012formule,waldspurger2012variante},  M{\oe}glin–Waldspurger \cite{moeglin2012conjecture}, Beuzart–Plessis\cite{beuzart2014expression,beuzart2016conjecture}, Gan–Ichino \cite{gan2016gross} and Atobe \cite{atobe2018local}.
\vskip 5pt
Over the complex number field $\BC$, since the (Vogan) L-packet contains only one element, the multiplicity formula in \cite[Theorem A]{chen2023multiplicity} implies the local Gan-Gross-Prasad conjecture.

\vskip 5pt

Over the real number field $\BR$, the current status of the conjecture is as follows:

\begin{itemize}
\item For Bessel models of unitary groups, Beuzart–Plessis proved the multiplicity one part of the conjecture for tempered L-parameters in \cite{beuzart2014expression,beuzart2016conjecture}. H. He proved the full conjecture for discrete L-parameters in \cite{MR3681395} and Xue proved the full conjecture for tempered L-parameters in \cite{MR4568696}. Both \cite{MR3681395} and \cite{MR4568696} use theta correspondence as the main tool. These results are extended to generic L-parameters by Xue in \cite{xue2020bessel} using the Schwartz homology.

\vskip 5pt

\item The local Gross–Prasad conjecture for tempered L-parameters was proved in \cite{luo2020local,chen2022local}, following Waldspurger’s approach. These results are extended to generic L-parameters by the first author in \cite{chen2023multiplicity}.

\vskip 5pt

\item For Fourier–Jacobi models of unitary groups, Xue proved the conjecture in \cite{MR4796148}. He first used (almost) equal rank theta correspondence to reduce the basic tempered case to the Bessel models for unitary group, and then extended to generic L-parameters using a similar strategy as in \cite{xue2020bessel}. %These results are extended to generic L-parameters by the first author in \cite{chen2023multiplicity}.
\end{itemize}
\vskip 5pt
The only remaining case is the Fourier–Jacobi model for symplectic–metaplectic groups. In this paper, we establish the conjecture for the basic tempered case. Together with the reduction results of \cite{chen2023multiplicity}, this settles the last outstanding case of the local Gan–Gross–Prasad conjecture, thereby bringing this long program to completion. 

\vskip 5pt

Next, we outline our strategy for proving \Cref{conj: GP in introduction}. The method of \cite{gan2016gross,atobe2018local} (over non-archimedean local fields) allows one to reduce the Fourier–Jacobi models to the Bessel models via the (almost) equal rank theta correspondence. Two key ingredients are needed in this approach:

\begin{itemize}
    \item the irreducibility of the ``big theta lifts" of tempered representations; and 

    \vskip 5pt
    
    \item the description of theta lifts in terms of L-parameters (the so-called Prasad's conjecture).  
\end{itemize}
\vskip 5pt
Over $\BR$, however, the irreducibility of the ``big theta lifts" in the (almost) equal rank case is not known in general. To circumvent this difficulty, we instead employ the stable range theta lift, for which the irreducibility of the ``big theta lifts" has been established in \cite{MR3305312}. Using the stable range theta lift, we are led naturally to certain non-tempered Gan–Gross–Prasad problems (see \cite{MR4190046}), both in the Bessel and Fourier–Jacobi cases. We then relate these non-tempered cases to the tempered ones by the mean of non-tempered reductions, which generalizes the arguments of \cite[\S 1.4]{moeglin2012conjecture} from the non-Archimedean setting to the real number field $\BR$. We hope that our approach here can shed some light for non-tempered Gan-Gross-Prasad problems. We should mention that Zhe Li and Shanwen Wang are also considering the basic tempered case of Fourier-Jacobi models, but with a different approach.

\vskip 5pt

This paper is organized as follows. In Section~\ref{section LLC}, we review the local Langlands correspondence for the groups that appear in this work.  
In Section~\ref{section: GGP conjecture} we recall the statements of the local Gross–Prasad conjecture and the local Gan–Gross–Prasad conjecture for the symplectic–metaplectic Fourier–Jacobi models.  
In Section~\ref{sec theta} we introduce some basic notions of Schwartz analysis and the properties of the theta correspondence that we will use.  
In Section~\ref{section: stable 1}, we prove the uniqueness part of the conjecture, namely: if there exists a pair of representations $(\pi,\wt{\pi})$ in a Vogan L-packet that admit a nonzero Fourier–Jacobi model, then they must be the pair of representations determined by the distinguished characters defined in (\ref{distigulished character}).  
Finally, in Section~\ref{section: stable 2} we establish the existence of the Fourier–Jacobi model, i.e., there does exist a pair $(\pi,\wt{\pi})$ in each Vogan L-packet with a nonzero Fourier–Jacobi model.  
Combining the results of Sections~\ref{section: stable 1} and~\ref{section: stable 2}, we complete the proof of \Cref{conj: GP in introduction}.

\vskip 5pt

\subsection*{Notation}

We end this introduction by setting some notations and conventions. Throughout the paper, we fix a non-trivial additive character $\psi$ of $\mathbb R$. For any $a\in \BR^\times$, we define $\psi_a(x)=\psi(ax)$ for $x\in \BR$. For a Nash group $G$, its closed Nash subgroup $H$, and a representation $\sigma$ of $H$, we denote by $\Ind_H^G(\sigma)$ and $\ind_H^G(\sigma)$ the usual normalized smooth induction and the normalized Schwartz induction from $H$ to $G$, respectively. In our proof of the main theorem, we will also make use of unnormalized inductions, which we shall denote by $\nInd$ and $\unind$ respectively. Since we will work with classical groups, we shall employ the standard notations
\[
\pi_1\times\pi_2\quad \mbox{and}\quad\tau\rtimes\pi
\]
to denote normalized parabolic inductions of general linear groups and classical groups. All tensor products in this paper should be understood as completed tensor products.

\vskip 5pt

\subsection*{Acknowledgement}

The first author is supported by the European Union’s Horizon 2020 research and innovation programme under the Marie Skłodowska-Curie grant agreement No. 101034255. The third author is supported by the Simons Foundation. The authors thank Zhibin Geng, Zhe Li, Shanwen Wang, Hang Xue and Hao Ying for helpful discussions. 

\vskip 10pt

\section{Local Langlands correspondence}\label{section LLC}
In this section, we recall the notion of Vogan L-packets for special orthogonal groups, symplectic groups, and metaplectic groups over $\BR$ following \cite{gan2012symplectic} and \cite{vogan1993local}.

\vskip 5pt

\subsection{Weil groups and L-parameters}
Recall that the Weil group $\WW_{\mathbb R}$ of $\mathbb R$ is defined to be
\[
\WW_{\BR}:=
\BC^\times \sqcup \BC^\times j,
\]
with $j^2=-1$ and $j\cdot z\cdot j^{-1}=\overline{z}$ for $z\in \BC^{\times}$. There is an isomorphism $\WW_{\mathbb R}^{\mathrm {ab}}\cong \mathbb R^\times$, where the superscript ``${\rm ab}$" means the abelianization. Denote by $\mathbbm 1$ and $\sgn$ the trivial and sign characters of $\mathbb R^{\times}$ and view them as characters of $\WW_{\mathbb R}$ via the isomorphism above. For $x\in \frac{1}{2}\mathbb Z$, we define a two-dimensional induced representation
\[
D_{x}=\Ind_{\mathbb C^\times }^{\WW_{\mathbb R}} \left(\frac{z^{2x}}{|z|^x}\right).
\]
Then $D_{x}\cong D_{-x}$ and they are irreducible unless $x=0$, in which case $D_{0}\cong \mathbbm 1 \oplus \sgn$. Moreover, $D_{x}$ is orthogonal if and only if $x \in \mathbb Z$, and $D_{x}$ is symplectic if and only if $x \in \frac{1}{2}+\mathbb Z$.
Any irreducible self-dual representation of $\WW_{\BR}$ is either

\begin{itemize}
	\item 1-dimensional, which factors through $\WW_{\mathbb R} ^{\rm ab}\cong \mathbb R^*$ and is therefore equal to the trivial character $\mathbbm 1$ or the sign character $\sgn$; or

    \vskip 5pt
    
	\item 2-dimensional, which is isomorphic to $D_{x}$ for some $x\in \frac{1}{2}\mathbb Z\,\backslash \{0\}$. 
\end{itemize} 
\vskip 5pt
For a finite-dimensional representation $M$ of $\WW_{\BR}$, we set $\epsilon(M,\psi)%=\gamma(1/2,\phi,\psi)
$ to be the local root number defined in \cite[\S 16]{jacquet2009archimedean}. These local root numbers are determined by the following rules:

\begin{enumerate}
    \item For two representations $M_1$ and $M_2$ of $\WW_\BR$, one has
    \begin{equation}\label{equ: epsilon add}
        \epsilon(M_1\oplus M_2,\psi)=\epsilon(M_1,\psi)\cdot\epsilon(M_2,\psi);
    \end{equation}

    \vskip 5pt
    
    \item Let $\psi_a(x)=\psi(ax)$, then we have
    \begin{equation}
        \epsilon(M,\psi_a)=\det M(a)\cdot \epsilon(M,\psi);
    \end{equation}

    \vskip 5pt
    
    \item Set $\psi_0(x)=e^{2\pi ix}$, then the local root numbers associated with irreducible representations of $\WW_\BR$ are given by
    \begin{equation}\label{equ: epsilon irr}
        \epsilon(M,\psi_0)=\begin{cases}
        (-i)^\varepsilon&\text{if }M=\sgn^{\varepsilon}|\cdot|^s,\\[5pt]
        (-i)^{2|x|+1}&\text{if } M=D_x|\cdot|^{s},
    \end{cases}\quad
    \end{equation}
    where $\varepsilon\in\{0,1\}$, $x\in \frac{1}{2}\BZ$, and $s\in\BC$.
\end{enumerate}
\vskip 5pt

Let $G$ be a connected reductive algebraic group over $\mathbb R$. We denote by $\wh{G}$ the Langlands dual group of $G$, and by ${}^LG:= \wh{G}\rtimes \Gal(\BC/\BR)$ the L-group of $G$. We say that a continuous homomorphism
\[
\phi:\WW_{\BR}\longrightarrow {}^LG
\]
is an L-parameter of $G$ if:

\begin{itemize}
    \item elements in the image $\Im(\phi)\subset {}^LG$ are semi-simple;

    \vskip 5pt
    
    \item the composition of $\phi$ with the projection map ${}^L G\rightarrow \Gal(\BC/\BR)$ is the natural projection map $\WW_{\BR}\rightarrow \Gal(\BC/\BR)$.
\end{itemize}
\vskip 5pt
Let $\Phi(G)$ be the set of the $\wh{G}$-conjugacy classes of L-parameters for $G$.
 An L-parameter $\phi$ is called \textbf{tempered} if $\Im(\phi)$ is bounded.    
We denote by $\Phi_{\temp}(G)$ the subset of the conjugacy class of tempered L-parameters of $G$. 
For an L-parameter $\phi$, we denote by $S_\phi=\Cent_{\wh{G}}(\Im(\phi))$ the centralizer of the image of $\phi$ in $\wh{G}$, and set $\mathcal{S}_\phi := \pi_0\left(S_\phi\right)$.

\vskip 5pt

\subsection{Local Langlands correspondence for classical groups}

Let $G$ be a connected reductive group over $\BR$, the celebrated local Langlands correspondence, established by R. Langlands in \cite{langlands1973the} % Vogan \cite{vogan1993local} 
and Shelstad in \cite{MR2454336, MR2448289, MR2581952}, asserts that:

\begin{enumerate}
    \item There is a finite to one surjective map
\[
    \mathcal {L}: \bigsqcup_{G'} \,\Irr(G') \longrightarrow \Phi(G),
\]
where $G'$ runs over the isomorphism classes of pure inner forms of $G$, and $\Irr(G')$ is the set of irreducible Casselman--Wallach representations of $G'(\BR)$ (see \cite{casselman1989canonical}, \cite{wallach1994real}).
For each $\phi\in \Phi(G)$, we denote by $  \Pi_{\phi}=\mathcal {L}^{-1}(\phi)$ the Vogan L-packet associated to $\phi$ and $\Pi_{\phi}(G')=  \Pi_{\phi}
\cap \Irr(G')$ the local L-packet of $G'$ associated to $\phi$. 
The map $\mathcal {L}$ preserves temperedness, i.e., $\pi\in \Irr(G')$ is tempered if and only if $\mathcal {L}(\pi)$ is a tempered L-parameter. 

\vskip 5pt

\item Let $G^*$ be a quasi-split pure inner form of $G$. A \textbf{Whittaker datum} of $G^*$ is a conjugacy class of pairs $\Fw = (B,\mu)$, where $B$ is an $\BR$-rational Borel subgroup of $G^*$ with unipotent radical $U$, and $\mu$ is a generic character of $U(\BR)$. Fix a Whittaker datum $\Fw$ of $G^*$, then there exists a bijection 
\[
    \calJ_\Fw:   \Pi_{\phi} \longrightarrow \Irr\left(\mathcal{S}_{\phi}\right),
\]
such that the unique $\Fw$-generic representation inside $\Pi_{\phi}$ corresponds to the trivial character of $\mathcal{S}_{\phi}$. %We shall henceforth denote by $\pi_{\Fw}(\phi,\eta)$ the unique member in $\Pi_{\phi}$ corresponding to $\eta\in\Irr\left(\mathcal{S}_{\phi}\right)$ under $\Fw$. %Sometimes when $\Fw$ is clear from the context we will omit it from the subscript.
\end{enumerate}
\vskip 5pt

In the case that $G$ is a classical group, given an L-parameter $\phi\in \Phi(G)$, by composing $\phi$ with the standard representation of ${}^LG$, we can often regard $\phi$ as a semi-simple representation of $\WW_{\BR}$ with some additional self-duality condition. Now we give a more concrete description of L-parameters, component groups, pure inner forms and Whittaker data of special orthogonal and symplectic groups following \cite[\S 10, 11]{gan2012symplectic}.

\vskip 5pt

\subsubsection{\bf The symplectic group}\label{sympletic group}
Let $(W, \langle \,,\,\rangle)$ be a $2n$-dimensional symplectic space and $G=\Sp(W)$. Then $\wh{G}=\SO_{2n+1}(\BC)$ and ${}^LG=\SO_{2n+1}(\BC)\times \Gal(\BC/\BR)$.

\begin{itemize}
\item An L-parameter $\phi$ of $G$ gives rise to an orthogonal representation $M$ of $\WW_{\BR}$ with $\dim M=2n+1$ and $\det(M)=1$. Let $\underline \Phi(G)$ be the set of isomorphism classes of $(2n+1)$-dimensional orthogonal representations of $\WW_\BR$ with determinant $1$. 
Then the map $\phi\mapsto  M$ gives a bijection between $\Phi(G)$ and $\underline \Phi(G)$. 
In later sections, we often refer to $M$ as an L-parameter of $G$ via the above bijection and put $\Pi_{M}(G') = \Pi_{\phi} (G')$.

\vskip 5pt

\item  We denote by $C_M$ the centralizer of the representation $M$ and $A_M=\pi_0(C_M)$ be the component group of $C_M$. We also let 
\[
C_M^+=\{a\in C_M|\det(a)=1\}
\]
and $A_M^+$ be the image of $C_M^+$ in $A_M$. Since $M$ is orthogonal, it admits a decomposition 
\[
   M = P + \sum_i m_iM_i + P^\vee,
\]
where $M_i$ are pairwise distinct irreducible orthogonal subrepresentations of $M$, $m_i$ is the multiplicity of $M_i$ in $M$, and any subrepresentation of $P$ is not orthogonal. Then 
\[
    A_M \cong  \prod_i \left(\BZ/2\BZ\right) a_i
\]
with a canonical basis $\{a_i\}$ corresponding to orthogonal subrepresentations $\{M_i\}$, and  $A_M^+$ is the kernel of the map
\[
    A_M \longrightarrow \BZ/2\BZ,\quad a_i \mapsto (-1)^{\dim M_i}.
\]
In particular,  $A_M^+$ is an index $2$ subgroup of $A_M$ since there is some $M_i$ that is 1-dimensional. According to \cite[Theorem 8.1]{gan2012symplectic}, we have $\mathcal{S}_\phi \cong A_M^+$. For each $c\in \mathbb R^\times$, we define a character $\delta_{M,c}$ of $C_M^+$ by setting
\begin{equation}\label{delta}
  \delta_{M,c}(a) = \det\left(M^a\right)(c),
\end{equation}
where $a\in C_M^+$, and $M^a$ is the $(-1)$-eigenspace of $a$ in $M$. This character $\delta_{M,c}$ factors through $A_M^+$, and we regard it as a character of $A_M^+$.

\vskip 5pt

\item The only pure inner form of $G$ is itself, so we have
\[
    \Pi_{\phi} = \Pi_\phi(G).
\]

\vskip 5pt

\item By \cite[\S 12]{gan2012symplectic}, the set of Whittaker data for $G = \Sp(W)$ is in bijection with the set of $\mathbb R^{\times 2}$-orbits of non-trivial characters $\psi$ of $\mathbb R$. In this paper, we fix a non-trivial character $\psi$ and denote by $\mathfrak{w}$ the Whittaker datum of $G = \Sp(W)$ corresponding to $\psi$.

\vskip 5pt

\item Let $\pi\in \Pi_M$. It follows from \cite[Theorem 4.9]{MR3194648}) that $\pi^\vee\in \Pi_{M}$, and
\begin{equation}\label{dual pi}
     \calJ_{\Fw}(\pi^\vee) =\calJ_{\Fw}(\pi)\otimes \delta_{M,-1}. 
\end{equation}
\end{itemize}

\vskip 5pt

\subsubsection{\bf The odd special orthogonal group}\label{odd special orthogonal group}

Let $(V,q)$ be a $(2n+1)$-dimensional quadratic space and $G={\SO}(V)$.  Recall that the discriminant of $V$ is defined by the formula
\begin{equation}\label{disc V odd}
 \disc(V)=(-1)^n \det \left(\langle e_i, e_j\rangle\right)_{i,j}\in \BR^\times/\BR^{\times2},
\end{equation}
where $\{e_i\}$ is an orthogonal basis of $V$, and 
\begin{equation}\label{symmteric bilinear form}
  \langle x,y\rangle= \frac{1}{2}\left(q(x+y)-q(x)-q(y)\right),\quad \mbox{for } x,y\in V
\end{equation}
is the symmetric bilinear form associated to $q$. In this case, we have $\wh{G}=\Sp_{2n}(\BC)$ and ${}^LG=\Sp_{2n}(\BC)\times \Gal(\BC/\BR)$. 

\begin{itemize}
\item An L-parameter $\phi$ of $G$ gives rise to a symplectic representation $M$ of $\WW_{\BR}$ with $\dim M=2n$. Let $\underline \Phi(G)$ be the set of isomorphism classes of $2n$-dimensional symplectic representations of $\WW_\BR$. Then the map $\phi\mapsto M$ gives a bijection between $\Phi(G)$ and $\underline \Phi(G)$. In later sections we often refer to $M$ as an L-parameter of $G$ via the above bijection and put $\Pi_{M}(G) = \Pi_{\phi} (G)$.

\vskip 5pt

\item We define the groups $C_M, C_M^+, A_M, A_M^+$ in the same way as in the case of symplectic groups \Cref{sympletic group}. Since $M$ is symplectic, it admits a decomposition 
\[
   M = P + \sum_i m_iM_i + P^\vee,
\]
where $M_i$ are pairwise distinct irreducible symplectic subrepresentations of $M$, $m_i$ is the multiplicity of $M_i$ in $M$, and any subrepresentation of $P$ is not symplectic. Then 
\[
    A_M \cong  \prod_i \left(\BZ/2\BZ\right) a_i,
\]
with a canonical basis $\{a_i\}$ corresponding to the symplectic subrepresentations $\{M_i\}$, and  $A_M^+=A_M$ in this case since every $M_i$ is 2-dimensional. According to \cite[Theorem 8.1]{gan2012symplectic} we have $\mathcal{S}_\phi \cong A_M^+$. 

\vskip 5pt

\item The pure inner forms of $G$ are of the form $G'={\SO}(V')$, where $V'$ is a quadratic space of the same dimension and discriminant as $V$. If $G=\SO(p,q)$ with $p+q=2n+1$, then these $V'$ are parametrized by indices $(p',q')$ such that
\[
    p'+q'=2n+1, \quad \text{and} \quad p'\equiv p \mod 2.
\]
We have
\[
    \Pi_{\phi} = \bigsqcup_{V'}\Pi_\phi(\SO(V')).
\]

\vskip 5pt

\item By \cite[\S 12]{gan2012symplectic}, there is a unique choice of Whittaker datum and we denote it by $\mathfrak{w}'$.  
\end{itemize}

\vskip 5pt

\subsubsection{\bf The even special orthogonal group}\label{sec even special orthogonal group}
Let $(V,q)$ be a $2n$-dimensional quadratic space and $G={\SO}(V)$. The discriminant $\disc(V)$ is defined as 
\begin{equation}\label{disc V even}
\disc(V)=(-1)^n \det \left(\langle e_i, e_j\rangle\right)_{i,j}\in \BR^\times/\BR^{\times2},
\end{equation}
where $\{e_i\}$ an orthogonal basis of $V$ and $\langle \cdot,\cdot\rangle$ is the symmetric bilinear form associated to $q$ defined in \Cref{symmteric bilinear form}. The discriminant character of $V$ is
\[
\chi_V=\begin{cases}
    \mathbbm{1}\quad & \mbox{if $\disc(V)\in \BR^{\times 2}$,}\\[5pt]
    \sgn\quad & \mbox{otherwise}. 
\end{cases}
\]
Then $\wh{G}=\SO_{2n}(\BC)$ and ${}^L G=\SO_{2n}(\BC)\rtimes \Gal(\BC/\BR)$. The action of $\Gal(\BC/\BR)$ on $\SO_{2n}(\BC)$ is trivial if $\disc(V)\in \BR^{\times 2}$ and is non-trivial otherwise. 

\begin{itemize}
\item An L-parameter $\phi$ of $G$ gives rise to an orthogonal representation $M$ of $\WW_{\BR}$ with $\dim M=2n$ and $\det(M)=\chi_V$. Let $\underline\Phi(G)$ be the set of isomorphism classes of $2n$-dimensional orthogonal representations of $\WW_\BR$ with determinant $\chi_V$, and $\underline\Phi^{\epsilon}(G)$ the subset of $\underline{\Phi}(G)$ consisting of those $M$ containing an $1$-dimensional orthogonal subrepresentation. The map $\phi\mapsto M$ is surjective from $\Phi(G)$ to $\underline\Phi(G)$. The fiber of an orthogonal representation $M\in \underline\Phi(G)$ contains one element if $M\in \underline \Phi^{\epsilon}(G)$ and two elements otherwise. In this paper we only need to consider L-parameters $\phi$ whose associated representation $M \in  \underline \Phi^{\epsilon}(G)$, so in later sections we can still refer to $M$ as an L-parameter of $G$ via the above map, and put $\Pi_{M}(G) = \Pi_{\phi} (G)$ without confusion. 

\vskip 5pt

\item We define the groups $C_M, C_M^+, A_M, A_M^+$ in the same way as in the case of symplectic groups \Cref{sympletic group}. Since $M$ is orthogonal, it admits a decomposition 
\[
   M = P + \sum_i m_iM_i + P^\vee,
\]
where $M_i$ are pairwise distinct irreducible orthogonal subrepresentations of $M$, $m_i$ is the multiplicity of $M_i$ in $M$, and any subrepresentation of $P$ is not orthogonal. Then 
\[
A_M \cong  \prod_i \left(\BZ/2\BZ\right) a_i,
\]
with a canonical basis $\{a_i\}$ corresponding to orthogonal subrepresentations $\{M_i\}$. Depending on whether $M\in\underline \Phi^{\epsilon}(G) $ or not, $A_M^+$ is either an index $2$ subgroup of $A_M$ or $A_M$ itself. According to \cite[Theorem 8.1]{gan2012symplectic} we have $\mathcal{S}_\phi \cong A_M^+$. For each $c \in \mathbb R^\times$, we define a character $\delta_{M,c}$ of $A_M^+$ in the same way as in \eqref{delta}.

\vskip 5pt

\item The pure inner forms of $G$ are of the form $G'={\SO}(V')$, where $V'$ is a quadratic space of the same dimension and discriminant as $V$. If $G=\SO(p,q)$ with $p+q=2n$, then these $V'$ are parametrized by indices $(p',q')$ such that
\[
    p'+q'=2n, \quad \text{and} \quad p'\equiv p \mod 2.
\]
We have
\[
    \Pi_{\phi} = \bigsqcup_{V'}\Pi_\phi(\SO(V')).
\]

\vskip 5pt

\item Let $G^* = \SO(V^+)$ be a quasi-split pure inner form of $G$. By \cite[\S 12]{gan2012symplectic}, the set of Whittaker data for $G^*$ is in bijection with the set of $\SO(V^+)$-orbits of non-isotropic lines $L \subset V^+$ such that $L^\perp$ is split. One can check that two such non-isotropic lines $L$ and $L'$ lie in the same orbit if and only if $\disc(L) = \disc(L')$. If $\disc(L)=c$, we denote by $\mathfrak{w}_c$ the Whittaker datum corresponding to $L$. Note that $\mathfrak{w}_c$ does not depend on the choice of the additive character $\psi$.

\vskip 5pt

\item For two Whittaker data $\Fw_c$ and $\Fw_{c'}$, where $c,c' \in \mathbb R^\times/(\mathbb R^\times)^2$, the discrepancy of the choices of Whittaker data is governed by the following formula (see \cite[Theorem 3.3]{MR3194648}):
\begin{equation}\label{change of Whittater data}
  \mathcal J_{\mathfrak{w}_{c'}}(\pi) = \mathcal J_{\mathfrak{w}_c}(\pi) \otimes \delta_{M, c'/c}, \quad \mbox{for }\pi \in \Pi_{\phi}.
\end{equation}
\end{itemize}

\vskip 5pt

\subsection{Local Langlands correspondence for metaplectic groups}\label{metaplectic group}

Let $W$ be a $2n$-dimensional symplectic space and $G=\Mp(W)$. Following \cite[\S 11]{gan2012symplectic}, we define $\wh{G}=\Sp_{2n}(\BC)$ and ${}^L G=\Sp_{2n}(\BC)\times \Gal(\BC/\BR)$. The concrete descriptions of L-parameters and component groups of metaplectic groups align with those of special odd orthogonal groups. To distinguish them, we shall typically denote an L-parameter for $G=\Mp(W)$ by the symbol $\widetilde \phi$. Following \cite[\S 11]{gan2012symplectic}, we define the LLC for $G=\Mp(W)$ by using the Shimura-Waldspurger correspondence. Recall that the Shimura-Waldspurger correspondence \cite{adams1998genuine}  gives a bijection 
\begin{equation}\label{Shimura}
   \theta_{V,W,\psi}:\Irr_{\gen}(\Mp(W))\longrightarrow \coprod_{\substack{\dim V=2n+1\\
\disc(V)=1}}\Irr(\SO(V)). 
\end{equation}
Combining this bijection with the LLC for special odd orthogonal groups, we obtain a finite to one surjective map
\begin{equation}\label{LLC for metaplectic}
\begin{split}
      \mathcal L_\psi: \Irr_{\gen}(\Mp(W)) & \longrightarrow \Phi(G)\\
     \pi& \longmapsto \mathcal L(\theta_{V,W,\psi}(\pi)).
\end{split}
\end{equation}
Moreover, as the only ``pure inner form'' of the group $G=\Mp(W)$ is itself, we set $\Pi_{\widetilde \phi}=\Pi_{\widetilde \phi}(G)=\mathcal L_\psi^{-1}(\widetilde \phi)$. We also obtain a parametrization of each packet by setting
\begin{equation}\label{LLC for metaplectic character}
\begin{split}
      \mathcal J_\psi: \Pi_{\wt{\phi}} & \longrightarrow \wh{\mathcal{S}_{\wt{\phi}}}\\
     \pi& \longmapsto \mathcal J_{\Fw} (\theta_{V,W,\psi}(\pi)).
\end{split}
\end{equation}

\vskip 10pt

\section{Local Gan-Gross-Prasad Conjecture}\label{section: GGP conjecture}

In this section we recall the statement of the local Gan-Gross-Prasad conjecture for symplectic-metaplectic Fourier-Jacobi models and special orthogonal Bessel models following \cite{gan2012symplectic}. We retain the notation in \Cref{section LLC} in this section. 

\vskip 5pt

\subsection{Symplectic-metaplectic case}\label{section FJ statment}
In this subsection, we state the conjecture for the symplectic-metaplectic case. Let $W$ be a symplectic space over $\BR$. 

\vskip 5pt

\subsubsection{\bf Distinguished character} 
We first recall the distinguished characters defined in \cite[\S 17]{gan2012symplectic}. Let $\wt \phi: \WW_\BR\rightarrow \Sp(M)$ (resp. $\phi: \WW_\BR \rightarrow \SO(N)$) be a tempered L-parameter of $\Mp(W)$ (resp. $\Sp(W)$), where $M$ (resp. $N$) is the symplectic (resp. orthogonal) representation of $\WW_\BR$ associated to $\wt\phi$ (resp. $\phi$). Put $N_1=N\oplus \mathbbm{1}$. We define a character $\eta_{N_1}$ of $C_M$ and a character $\eta_M$ of $C_{N_1}^+$ as follows: for $a\in C_{M}$ and $b \in C^{+}_{N_1}$, we set

\begin{equation}\label{distigulished character}
\begin{cases}
    \eta_{N_1}(a)= \epsilon\left(M^a\otimes N_1,\psi\right)\cdot \det\left(M^a\right)(-1)^{\frac{\dim N_1}{2}}\cdot \det\left(N_1\right)(-1)^{\frac{\dim M^a}{2}};\\[5pt]
    
    \eta_{M} (b) = \epsilon\left(M\otimes N_1^b,\psi\right)\cdot \det\left(M\right)(-1)^{\frac{\dim N_1^b}{2}}\cdot \det\left(N_1^b\right)(-1)^{\frac{\dim M}{2}},
\end{cases} 
\end{equation}
where $M^a$ and $N_1^{b}$ are the $(-1)$-eigenspaces for $a$ in $M$ and $b$ in $N_1$, respectively, and $\epsilon(\cdots)$ is the local root number defined by \eqref{equ: epsilon add} and \eqref{equ: epsilon irr}. 
The characters $\eta_{N_1}$ and $\eta_{M}$ factor through $A_{M}$ and $A^+_{N_1}$. Note that $\mathcal S_{\widetilde \phi}\cong A_M$, and $\mathcal S_\phi\cong A^+_N$ is a subgroup of $A^+_{N_1}$. We shall regard $\eta_{M}$ as a character of $\mathcal S_\phi\cong A^+_N$ by restriction. 

\vskip 5pt

\subsubsection{\bf Statement of the conjecture}

The projection map and the identity map define a diagonal embedding
\[
  \Delta : \Mp(W) \longrightarrow \Sp(W)\times \Mp(W).
\]
Recall that we have fixed a non-trivial additive character $\psi$. This determines a Weil representation $\omega_{\psi}$ of $\Mp(W)$, a Whittaker datum $\mathfrak w$ of $\Sp(W)$ (see \Cref{sympletic group}),
as well as the map $\mathcal{J}_\psi$ in the LLC for the metaplectic group $\Mp(W)$ (see \eqref{LLC for metaplectic character}). The following theorem, known as the symplectic-metaplectic case of the local Gan-Gross-Prasad conjecture, is the main result of this paper.

\begin{thm}\label{GGP FJ}
Let $\wt\phi : \WW_{\BR}\rightarrow \Sp(M)$ be a tempered L-parameter of $\Mp(W)$, and $\phi : \WW_{\BR}\rightarrow \mathrm{SO}(N)$ a tempered L-parameter of $\Sp(W)$. Let $\wt\pi \in \Pi_{\wt\phi}$ and $\pi \in \Pi_{\phi}$. Then
\[
  \Hom_{\Mp(W)}\left(\pi \boxtimes \wt\pi, \omega_{\psi}\right)\neq 0
  \;\Longleftrightarrow\;
  \calJ_\psi\left(\wt\pi\right)\times \calJ_{\mathfrak w}\left(\pi\right)=\eta_{N_1}\times \eta_M,
\]
where $\eta_{N_1}$ and $\eta_M$ are the characters defined in \eqref{distigulished character}.
\end{thm}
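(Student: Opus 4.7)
The plan is to adapt the non-Archimedean strategy of Gan--Ichino \cite{gan2016gross} and Atobe \cite{atobe2018local}, which reduces the Fourier--Jacobi model problem to the Bessel model problem via an (almost) equal rank theta correspondence. Over $\BR$ the irreducibility of big theta lifts in (almost) equal rank is not available in general, so I would use the stable range theta correspondence instead, for which the irreducibility of big theta lifts is established in \cite{MR3305312}. The proof splits into two halves: a uniqueness direction, treated in Section~\ref{section: stable 1}, showing that at most one pair in $\Pi_{\phi}\times \Pi_{\wt\phi}$ admits a nonzero Fourier--Jacobi model and that it must correspond to the distinguished characters $\eta_{N_1}\times \eta_M$; and an existence direction, treated in Section~\ref{section: stable 2}, producing such a pair explicitly.

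For the uniqueness part, assume $\Hom_{\Mp(W)}\bigl(\pi\boxtimes \wt\pi,\omega_{\psi}\bigr)\neq 0$ for some $\pi\in \Pi_{\phi}$ and $\wt\pi\in \Pi_{\wt\phi}$. I would set up a see-saw diagram that embeds this Hom space into a Bessel Hom space for representations obtained by stable range theta lifts of $\pi$ and $\wt\pi$ to sufficiently large orthogonal groups $\Or(V_1)$ and $\Or(V_2)$. Because we are in the stable range, these big theta lifts are irreducible, and their L-parameters together with their component group characters can be computed via a Prasad-type recipe in terms of $\phi$, $\wt\phi$, and the additive character. The resulting Bessel problem is genuinely non-tempered, so I would then invoke a non-tempered reduction -- generalizing \cite[\S 1.4]{moeglin2012conjecture} from the non-Archimedean setting -- to reduce to the known tempered Bessel case over $\BR$ from \cite{chen2022local}. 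Tracking epsilon factor identities through (\ref{equ: epsilon add})--(\ref{equ: epsilon irr}) and matching with the definition (\ref{distigulished character}) would then complete the character identification.

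For the existence part, the strategy runs in reverse. I would start from the tempered Bessel case of the local Gan--Gross--Prasad conjecture for appropriate special orthogonal groups, which already produces a pair $(\sigma_{\SO},\sigma'_{\SO})$ realizing the nonzero Bessel model corresponding to the distinguished characters dictated by $\phi$ and $\wt\phi$. Pulling this pair back through stable range theta correspondence -- where irreducibility and the explicit description of theta lifts are available -- yields a candidate pair $(\pi,\wt\pi)\in \Pi_{\phi}\times \Pi_{\wt\phi}$. The see-saw argument used in the uniqueness step, read in the opposite direction, transports the nonvanishing of the Bessel model into nonvanishing of $\Hom_{\Mp(W)}\bigl(\pi\boxtimes \wt\pi,\omega_{\psi}\bigr)$, and a second character calculation -- parallel to the one in Section~\ref{section: stable 1} -- confirms that the pair so produced is exactly the one predicted by $\eta_{N_1}\times \eta_M$.

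The hard part is the non-tempered reduction over $\BR$. In the non-Archimedean setting it rests on the multiplicativity of the Bessel Hom functor along parabolic induction, combined with the compatibility of the LLC with parabolic induction. Over $\BR$ the same principle should apply, but the multiplicativity must be formulated within the Schwartz homology framework of Section~\ref{sec theta}, since ordinary Hom does not behave well with respect to completed tensor products; moreover, the component group $A_M^+$ of an orthogonal parameter sits inside $A_M$ with index one or two depending on whether $M\in \underline{\Phi}^{\epsilon}(G)$, so the bookkeeping of characters under both parabolic induction and theta lift is delicate. Ensuring that the stable range lift sends tempered input to precisely the non-tempered Bessel standard module whose GGP problem is accessible -- and then tracking the distinguished characters across the see-saw -- is where I expect the main technical effort to lie.
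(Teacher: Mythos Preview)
Your proposal follows essentially the same architecture as the paper, and you have correctly identified the key ingredients: the split into uniqueness (Section~\ref{section: stable 1}) and existence (Section~\ref{section: stable 2}), the use of stable range theta lifts \cite{MR3305312} to sidestep the unavailability of big-theta irreducibility in almost equal rank, a non-tempered reduction in the spirit of \cite[\S 1.4]{moeglin2012conjecture}, and the appeal to the tempered Bessel case \cite{chen2022local}. Two points deserve sharpening.

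First, the character identification does not come from a Prasad-type recipe applied to the stable range lifts. The stable range lifts $\sigma_t,\sigma'_t$ are non-tempered Langlands quotients built on top of the \emph{almost equal rank} lifts $\sigma=\theta_{V,W,\psi}(\pi^\vee)$ and $\sigma'=\theta_{W,V',\psi}^{-1}(\wt\pi)$; after the peeling-off step (Lemma~\ref{L:ind1}) you land precisely on the tempered Bessel problem for $(\sigma,\sigma')$, and it is Prasad's conjecture in the almost equal rank case (Theorem~\ref{T:AlmostEqualrk}) together with the Shimura--Waldspurger definition of $\calJ_\psi$ that translates $\calJ_{\Fw_c}(\sigma),\calJ_{\Fw'}(\sigma')$ back to $\calJ_{\Fw}(\pi),\calJ_\psi(\wt\pi)$. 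The stable range enters only to make the seesaw identity clean; it plays no role in the epsilon bookkeeping.

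Second, the existence direction is not literally the uniqueness seesaw ``read in the opposite direction.'' In uniqueness one enlarges the \emph{orthogonal} side ($V\to V_t$) and peels off on the Bessel side. For existence the obstruction is that $\Theta_{W,V'_\alpha,\psi}(\sigma')$ may be reducible, so one must enlarge the \emph{symplectic} side ($W\to W_t$) via the seesaw \eqref{SeeSaw-2}, producing a non-tempered \emph{Fourier--Jacobi} model at level $t$. The peeling-off here (Lemma~\ref{L:ind2}) is genuinely new: it requires the ``parabolic induction'' formalism for Jacobi groups (Section~\ref{Rep of Jacobi}), the compatibility of the Fourier--Jacobi functors $\mathcal F_\psi,\mathcal G_\psi$ with these inductions (Proposition~\ref{F_k and G_k vs parabolic}), and an orbit analysis with three strata rather than two. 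This Jacobi-side reduction is where most of the technical novelty lies, and your sketch underweights it by treating existence as a formal reversal of uniqueness.
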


\vskip 5pt

\subsection{Special orthogonal case}\label{section bessel case}
In this subsection, we recall the local Gross-Prasad conjecture (i.e. local Gan-Gross-Prasad conjecture for special orthogonal groups) over $\BR$ proved in \cite[Theorem 2.3.2]{chen2022local}. 
Let $V$ be a quadratic space over $\BR$ and $V'$ be a non-degenerate subspace of $V$ of codimension $1$. Assume that both $\SO(V)$ and $\SO(V')$ are quasi-split. Let $V^{\even}$ (resp. $V^{\odd}$) be the unique even (resp. odd) dimensional space in $\{V,V'\}$. 

\vskip 5pt

\subsubsection{\bf Distinguished character} 
We first recall the distinguished characters defined in \cite[\S 17]{gan2012symplectic}. Let $\phi: \WW_{\BR}\rightarrow \mathrm O(M)$ (resp. $\phi': \WW_{\BR}\rightarrow \Sp(N)$) be a tempered L-parameter of $\SO(V^\even)$ (resp. $\SO(V^\odd)$), where $M$ (resp. $N$) is the orthogonal (resp. symplectic) representation of $\WW_\BR$ associated to $\phi$ (resp. $\phi'$). %For the application in \S 5 and \S 6,  we focus on the situation when $V$ is even dimensional. In this case, ${}^L\SO(V)=\mathrm{O}(M)$ and ${}^L\SO(V')=\Sp(N)$. 
We define a character $\eta_{N}$ of $C^+_M$ and a character $\eta_M$ of $C_{N}$ as follows: for  each $a\in C^+_{M}$ and $b \in C_{N}$, we set
\begin{equation}\label{distigulished character Bessel}
\begin{cases}
\eta_N(a)= \epsilon\left(M^a\otimes N,\psi\right)\cdot \det\left(M^a\right)(-1)^{\frac{\dim N}{2}}\cdot \det\left(N\right)(-1)^{\frac{\dim M^a}{2}};\\[5pt]

\eta_M(b)= \epsilon\left(M \otimes N^b,\psi\right)\cdot \det\left(M\right)(-1)^{\frac{\dim N^b}{2}}\cdot \det\left(N^b\right)(-1)^{\frac{\dim M}{2}},
\end{cases}  
\end{equation}
where $M^a$ and $N^b$ are the $(-1)$-eigenspaces for $a$ in $M$ and $b$ in $N$, respectively, and  
$\varepsilon(\cdots)$ is the local root number defined in  \Cref{equ: epsilon add} and \Cref{equ: epsilon irr}. The characters $\eta_N$ and $\eta_M$ factor through $A^+_{M}\cong \mathcal{S}_\phi$ and $A_N\cong \mathcal{S}_{\phi'}$. 

\vskip 5pt

\subsubsection{\bf Statement of Conjecture}
We say a pure inner form $G_\alpha=\SO(V_\alpha)\times \SO(V'_\alpha)$ of $G=\SO(V)\times \SO(V')$ is relevant, if there is an embedding $V'_\alpha \hookrightarrow V_\alpha$ of quadratic spaces, such that $V_\alpha/V'_\alpha\cong V/V'$ as quadratic spaces. If this is the case, then there is a natural embedding $\SO(V'_\alpha)\hookrightarrow \SO(V_\alpha)$. Hence, we have a diagonal embedding 
\[
\Delta: \SO(V'_\alpha)\longrightarrow \SO(V_\alpha)\times \SO(V'_\alpha).
\]
Recall that there is a unique choice of Whittaker datum for $\SO(V^\odd)$, which we denote by $\mathfrak{w}'$. Let $c=\disc(V^\odd)$. Following \cite[\S 17]{gan2012symplectic}, we shall use the Whittaker datum $\mathfrak w_c$ of $\SO(V^\even)$ (see \Cref{sec even special orthogonal group}) to fix the internal parametrization of L-packets in the LLC. The following theorem, known as the local Gan-Gross-Prasad conjecture for special orthogonal groups, has been proved in \cite[Thm. 2.3.2]{chen2022local}. 

\begin{thm}\label{GGP Bessel}
%Fix Whittaker datum $\Fw$ of $G$ and $\Fw'$ of $G'$.
Let $\phi : \WW_{\BR}\rightarrow {\rm O}(M)$ be a tempered L-parameter of $\SO(V^{\even})$, and $\phi': \WW_{\BR}\rightarrow \Sp(N)$ a tempered L-parameter of $\SO(V^{\odd})$. Let $\sigma^{\even}\boxtimes\sigma^{\odd}$ be an irreducible tempered representation of a relevant pure inner form $G_\alpha=\SO(V^{\even}_\alpha)\times \SO(V^{\odd}_\alpha)$, such that $\sigma \in \Pi_{\phi}$ and $\sigma'\in \Pi_{\phi'}$. Then one has 
\begin{equation}\label{equation GGP Bessel}
    \Hom_{\SO(V_\alpha')}\left(\sigma^{\even}\boxtimes \sigma^{\odd}, \BC\right)\neq 0
     \;\Longleftrightarrow\;
    \calJ_{\Fw_c}\big(\sigma^{\even}\big)\times \calJ_{\Fw'}\big(\sigma^{\odd}\big)=\eta_N\times \eta_M,
\end{equation}
where $\eta_{N}$ and $\eta_M$ are the characters defined in \eqref{distigulished character Bessel}, and $c=\disc(V^\odd)$. 
\end{thm}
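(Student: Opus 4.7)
The plan is to follow Waldspurger's endoscopic approach from the non-Archimedean case \cite{waldspurger2012conjecture, waldspurger2012formule, waldspurger2010formule}, adapted to real groups by Beuzart-Plessis \cite{beuzart2014expression, beuzart2016conjecture} and carried out for special orthogonal groups by Luo and the first author \cite{luo2020local, chen2022local}. It proceeds in three stages: (i) an integral (geometric) multiplicity formula on the Bessel subgroup; (ii) stabilization via endoscopic character identities; and (iii) explicit evaluation of the stabilized sum in terms of local root numbers.

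\textbf{Step 1: Geometric multiplicity formula.} The first goal is to express
\[
m(\sigma^{\even}\boxtimes\sigma^{\odd}) := \dim\Hom_{\SO(V'_\alpha)}\!\left(\sigma^{\even}\boxtimes \sigma^{\odd},\, \BC\right)
\]
as a sum of weighted integrals of the product $\Theta_{\sigma^{\even}}\cdot \Theta_{\sigma^{\odd}}$ of Harish-Chandra characters, restricted to the Bessel subgroup, over a transversal of ``Bessel-type'' semisimple conjugacy classes in $\SO(V'_\alpha)$. The ingredients are strongly cuspidal Schwartz test functions, Arthur-style regularization of weighted orbital integrals, and the asymptotic (germ) expansions of characters at singular elements. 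The delicate analysis of these germs at non-regular classes is the technical heart of this step and the main obstacle of the whole argument.

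\textbf{Step 2: Stabilization and endoscopic transfer.} Fix tempered L-parameters $\phi$ of $\SO(V^{\even})$ and $\phi'$ of $\SO(V^{\odd})$. Sum the Step 1 identity over all pairs $(\sigma^{\even},\sigma^{\odd})\in \Pi_\phi\times\Pi_{\phi'}$ and over all relevant pure inner forms, weighted by characters $\eta\otimes\eta'$ of $\mathcal{S}_\phi\times\mathcal{S}_{\phi'}\cong A_M^+\times A_N$. By Shelstad's endoscopic character identities for real classical groups \cite{MR2454336, MR2448289, MR2581952}, the resulting stable sums transfer to characters of generic representations on products of general linear groups, indexed by the decompositions of $M$ according to $a\in A_M^+$ and of $N$ according to $b\in A_N$. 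Fourier inversion on $\mathcal{S}_\phi\times\mathcal{S}_{\phi'}$ then reduces the claim to determining, for each pair $(\eta,\eta')$, whether the transferred scalar is zero or nonzero.

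\textbf{Step 3: Identification of the distinguished character.} The transferred expression in Step 2 evaluates via standard Rankin--Selberg/Jacquet--Shalika methods to a product involving local $\epsilon$-factors of the pairings $M^a\otimes N$ and $M\otimes N^b$, together with sign corrections coming from central characters of the transferred general linear representations. Expanding via the additivity \eqref{equ: epsilon add} and the explicit formulas \eqref{equ: epsilon irr} for local root numbers of irreducible representations of $\WW_\BR$, and comparing term by term with the defining formula \eqref{distigulished character Bessel} for $(\eta_N,\eta_M)$, one sees that the transferred scalar vanishes unless $(\eta,\eta')=(\eta_N,\eta_M)$ and is nonzero for this specific pair. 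Combined with the multiplicity-one bound for Bessel models (which is already known for real classical groups), this yields the stated equivalence. The principal difficulty remains the Archimedean geometric multiplicity formula of Step 1: in the absence of a Bruhat--Tits building one must develop real-analytic tools for quasi-characters and regularized orbital integrals on real reductive groups, and extend Beuzart-Plessis's localization techniques from the unitary to the special orthogonal setting.
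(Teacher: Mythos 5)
The paper does not prove this theorem. It is quoted verbatim as an input, with the attribution ``has been proved in \cite[Thm.\ 2.3.2]{chen2022local}''; the paper's contribution is to use this Bessel-model result, together with theta correspondence and non-tempered reduction arguments, to deduce the Fourier--Jacobi case (Theorems~\ref{P:LtoR} and~\ref{P:RtoL}). So there is no internal proof for your proposal to be compared against.

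As for the proposal itself: you have correctly identified the strategy of the cited works (Waldspurger's integral multiplicity formula, endoscopic stabilization \`a la Shelstad, and evaluation of the stabilized sum in terms of local root numbers, as carried out over $\BR$ by Beuzart-Plessis and by Luo--Chen). But what you have written is a programme, not a proof. The substantive content lies entirely in the steps you flag as difficult and then skip: the Archimedean geometric multiplicity formula (regularized weighted orbital integrals, germ expansions of Harish-Chandra characters near singular semisimple elements, the passage from test functions to quasi-characters), and the explicit matching of the stabilized spectral side with the distinguished character $\eta_N\times\eta_M$. Each of those steps occupies a substantial portion of \cite{beuzart2014expression,beuzart2016conjecture,luo2020local,chen2022local}, and nothing in your sketch supplies them. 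If the task were to prove the theorem from scratch, the proposal has a genuine gap at every one of its three steps. If the task is, as in this paper, to invoke the theorem as a black box, then your summary of where it comes from is accurate but beside the point: the paper simply cites \cite[Thm.\ 2.3.2]{chen2022local}, and that is the correct move in this context.
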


\vskip 10pt

\section{Preliminaries and Toolkits}\label{sec theta}
In this section, we recall some preliminaries and toolkits for later proofs. 

\vskip 5pt

\subsection{Schwartz induction and Borel's lemma}\label{section: Schwartz}

We recall some basic notions of Schwartz analysis in this subsection. We shall work in the setting of the so-called almost linear Nash groups, i.e., a Nash group admitting a homomorphism to some general linear group with finite kernel. All groups we are working with in this paper (orthogonal, symplectic, metaplectic and Jacobi groups, as well as their closed subgroups) belong to this class of groups. Let $G$ be an almost linear Nash group, we denote by $\Smod(G)$ the category of Fr\'echet representations of moderate growth of $G$.

\vskip 5pt

Let $H$ be a Nash subgroup of $G$, and $(\pi_H, V)\in \Smod(H)$. Then there is a continuous map
\[
    I: \mathcal{S}(G,V) \longrightarrow C^\infty(G,V), \quad  f\longmapsto \left(g\mapsto \int_H \pi_H(h)f\left(h^{-1}g\right)\,dh\right)
\]
from the space of Schwartz functions $\mathcal{S}(G,V)$ to the space of smooth functions $C^\infty(G,V)$, with the image landing in the usual un-normalized induction $\unInd_H^G V$. The image of $I$, equipped with the quotient topology of $\mathcal{S}(G,V)$, is called the Schwartz induction of $\pi_H$ and will be denoted by $\unind_H^G V$. We also define the \emph{normalized Schwartz induction} of $V$ from $H$ to $G$ to be
\begin{equation}\label{normalized Schwartz induction}
       \ind_H^G V \;:=\; \left(\,\unind_H^G\!\left(V\cdot \delta_H^{1/2}\right)\right)\cdot \delta_G^{-1/2}.
\end{equation}
Here $\delta_H$ and $\delta_G$ are the modulus characters of $H$ and $G$. In particular, if $G$ is a reductive group and $P$ is a parabolic subgroup of $G$, then $\ind_P^G V=\unind_P^G\left(V\cdot \delta_P^{1/2}\right)$ coincides with the usual (normalized) parabolic induction. 

\vskip 5pt

For a representation $\left(\pi, V\right)\in\Smod\left(G\right)$, we put
\[
    V_G = V\big/\left\langle \pi\left(g\right)v-v~\big|~g\in G,~v\in V\right\rangle,
\]
equipped with the quotient topology. It might not be Hausdorff. By definition, we have 
\[
    V_G^* \cong \Hom_G\left(V, \mathbbm{1}_G\right),
\]
where $V_G^*$ is the continuous linear dual of $V_G$, and $\mathbbm{1}_G$ is the trivial representation of $G$. The next proposition is a version of the Frobenius reciprocity in the Schwartz analysis setting. Readers may consult \cite[Proposition 6.6, Theorem 6.8]{MR4211018} for more details.

\begin{pro}\label{Frobenious}
Let $H$ be a Nash subgroup of $G$ and  $(\pi,V)\in \Smod(H)$. Then 
\[
   \left(\left(\unind_H^G \left( V\otimes  \delta_H\right)\right)\otimes\delta_G^{-1}\right)_G \simeq V_H.
\]
%where $\delta_H$ and $\delta_G$ are the modulus characters of $H$ and $G$ respectively.
\end{pro}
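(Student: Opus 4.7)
The plan is to construct an explicit natural map between the two sides using $V$-valued integration on $G$, and then verify it is an isomorphism by exhibiting an explicit inverse. The statement is a Schwartz-analytic analog of classical Frobenius reciprocity, and the appearance of the modulus characters $\delta_H$ and $\delta_G^{-1}$ is the natural cost of working with the unnormalized averaging map $I$ that defines Schwartz induction.

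First, I would construct the forward map
\[
\Phi: \left(\unind_H^G(V \otimes \delta_H) \otimes \delta_G^{-1}\right)_G \longrightarrow V_H.
\]
Since $\unind_H^G(V \otimes \delta_H)$ is by definition the image of the averaging map $I$, every element of it lifts to some $\phi \in \mathcal{S}(G, V \otimes \delta_H) \cong \mathcal{S}(G) \otimes V$. I would set
\[
\Phi\bigl([I(\phi) \otimes 1]\bigr) \;=\; \left[\int_G \phi(g)\,dg\right]_H,
\]
where the $V$-valued integral is taken in the Bochner sense (well-defined because $\phi$ is Schwartz) and $[\cdot]_H$ denotes the image in $V_H$. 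The $\delta_G^{-1}$ twist is precisely calibrated so that a change of variable under the right-translation action of $G$ cancels the $\delta_G$ factor produced by translating the integrand, which is what makes $\Phi$ descend to the $G$-coinvariants.

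The main technical hurdle is to verify that $\Phi$ is independent of the choice of lift $\phi$, equivalently that it factors through the quotient by $\ker I$. Concretely, one must show that if $I(\phi) = 0$, then $\int_G \phi(g)\,dg$ lies in $\overline{\Span\{\pi(h)v - v : h \in H,\, v \in V\}}$. This is a Fubini-type statement: the pointwise vanishing of the $H$-average $I(\phi)$ forces $\phi$ to lie, up to closure, in the span of $H$-coboundaries of the form $\psi(g) - \delta_H(h)\pi(h)\psi(h^{-1}g)$, whose $G$-integrals manifestly lie in the relation subspace. Making this precise in the Schwartz topology is exactly the content of the machinery of \cite[Proposition 6.6]{MR4211018}, which I would invoke here.

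For the inverse, fix any auxiliary $\rho \in \mathcal{S}(G)$ with $\int_G \rho(g)\,dg = 1$ and define $\Psi: V \to \left(\unind_H^G(V \otimes \delta_H) \otimes \delta_G^{-1}\right)_G$ by $v \mapsto [I(\rho \otimes v) \otimes 1]$. Independence of $\rho$ modulo the $G$-coinvariants relations follows from the same equivariance property established for $\Phi$, and a direct computation shows that $\Psi(\pi(h)v - v) = 0$ in the quotient, so $\Psi$ descends to a map out of $V_H$. Checking $\Phi \circ \Psi = \mathrm{id}_{V_H}$ is immediate from the normalization of $\rho$, and $\Psi \circ \Phi = \mathrm{id}$ then follows from a direct integral manipulation on a dense subspace and extends by continuity. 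The principal obstacle throughout is the closure/Fubini argument for the well-definedness of $\Phi$; once this is settled, the rest is essentially bookkeeping with modulus characters.
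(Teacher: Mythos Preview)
The paper does not supply its own proof of this proposition; it simply refers the reader to \cite[Proposition~6.6, Theorem~6.8]{MR4211018}. Your sketch is a correct unpacking of the standard argument behind that reference: the forward map is integration over $G$, the backward map is ``spread a vector out by a bump function,'' and the only non-formal step is the closure/Fubini argument showing that integration kills $\ker I$. Since you explicitly invoke \cite[Proposition~6.6]{MR4211018} for precisely that step, your approach and the paper's citation ultimately rest on the same source; you have just made the skeleton of the construction explicit, which the paper does not bother to do.
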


\vskip 5pt

\begin{cor}\label{Frobenious cor}
Let $G$ be a reductive Nash group, and $H$ a reductive Nash subgroup of $G$. Let $(\pi,V)$ be a Casselman-Wallach representation of $H$, and $(\sigma,W)$ a Casselman-Wallach representation of $G$. Then we have 
\[
    \Hom_G\left(\unind_H^G ( V\otimes \delta_H), W\otimes \delta_G\right)\cong \Hom_H\left(W^\vee\big|_H, V^\vee\right), 
\]
where $V^\vee$ stands for the contragredient representation of $V$, and likewise for $W^\vee$.
\end{cor}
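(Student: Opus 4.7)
The plan is to reduce this corollary to \Cref{Frobenious} (the preceding Frobenius reciprocity for Schwartz/smooth induction) by the usual three-step procedure of dualization, applying the projection formula, and interpreting coinvariants as $\Hom$ against the trivial representation.

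First, I would invoke the duality in the Casselman--Wallach category: since $W \otimes \delta_G$ admits a well-behaved contragredient $W^\vee \otimes \delta_G^{-1}$, the space $\Hom_G\bigl(\unind_H^G(V \otimes \delta_H),\; W \otimes \delta_G\bigr)$ is naturally isomorphic to
\[
\Hom_G\!\left(\unind_H^G(V \otimes \delta_H) \otimes W^\vee \otimes \delta_G^{-1},\; \mathbbm{1}_G\right),
\]
which in turn equals the continuous dual of the coinvariants $\bigl(\unind_H^G(V\otimes\delta_H)\otimes W^\vee \otimes \delta_G^{-1}\bigr)_G$. Next I would apply the \emph{projection formula} for un-normalized Schwartz induction, namely the natural isomorphism
\[
\unind_H^G(V\otimes\delta_H) \otimes W^\vee \;\cong\; \unind_H^G\!\bigl((V\otimes\delta_H) \otimes W^\vee|_H\bigr),
\]
which follows from the fact that multiplication by a smooth vector of a $G$-representation is a continuous $G$-equivariant endomorphism of $\CS(G)$. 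Since $\delta_G^{-1}$ is a character of $G$, we may pull it through the induction as well, reducing the expression to $\bigl(\unind_H^G((V \otimes W^\vee|_H) \otimes \delta_H) \otimes \delta_G^{-1}\bigr)_G$.

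At this point \Cref{Frobenious}, applied with $V$ replaced by $V \otimes W^\vee|_H \in \Smod(H)$, identifies this coinvariant space with $(V \otimes W^\vee|_H)_H$. Taking continuous duals and using $X_H^* \cong \Hom_H(X, \mathbbm{1}_H)$ together with the tensor--hom adjunction yields
\[
\Hom_H\!\left(V \otimes W^\vee|_H,\; \mathbbm{1}_H\right) \;\cong\; \Hom_H\!\left(W^\vee|_H,\; V^\vee\right),
\]
as desired.

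The main technical obstacle is verifying that all the manipulations above are compatible with the Fr\'echet/moderate-growth topologies: specifically, one must check that the duality $\Hom_G(X,Y) \cong \Hom_G(X \otimes Y^\vee, \mathbbm{1}_G)$ is valid in the Casselman--Wallach setting (so that continuous $G$-maps really do correspond to $G$-invariant continuous functionals on the tensor product), and that the projection formula holds as an isomorphism of Fr\'echet $G$-representations rather than merely on the level of underlying vector spaces. Both facts are standard but rely on the $G$-finiteness and nuclearity properties of Casselman--Wallach representations; once they are in place, the chain of isomorphisms above is formal.
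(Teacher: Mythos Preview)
Your proposal is correct and follows essentially the same route as the paper: dualize into $\Hom_G(-\otimes W^\vee\otimes\delta_G^{-1},\mathbbm{1}_G)$, apply the projection formula to pull $W^\vee|_H$ inside the Schwartz induction, invoke \Cref{Frobenious} with $V$ replaced by $V\otimes W^\vee|_H$, and finish with tensor--hom adjunction. The paper handles the two technical points you flag (duality and the projection formula in the Casselman--Wallach setting) by citing \cite[Lemma 2.2.22]{chen2023multiplicity}, so your identification of these as the only nontrivial steps is spot on.
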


\vskip 5pt

\begin{proof}
    Since $(\pi,V)$ and $(\sigma,W)$ are Casselman-Wallach representations, it follows from \cite[Lemma 2.2.22]{chen2023multiplicity} that we have
    \[
    \begin{aligned}
        \Hom_G\left(\unind_H^G ( V\otimes \delta_H), W\otimes \delta_G\right)&\cong \Hom_G\left(\unind_H^G (V\otimes \delta_H)\otimes W^\vee \otimes \delta_G^{-1},\mathbbm{1}_G\right)\\
        &\cong \, \Hom_{G}\left(\unind_H^G \left(\left(V\otimes W^\vee\big|_H\right)\otimes \delta_H\right) \otimes \delta_G^{-1},\mathbbm{1}_G\right)
    \end{aligned}
    \]
    Then by \Cref{Frobenious}, we have
    \[
    \begin{aligned}
        \Hom_{G}\left(\unind_H^G \left(\left(V\otimes W^\vee\big|_H\right)\otimes \delta_H\right) \otimes \delta_G^{-1},\mathbbm{1}_G\right) &\cong \, \Hom_H\left(V\otimes W^\vee\big|_H,\mathbbm{1}_H\right)\\
        &\cong \, \Hom_H\left(W^\vee\big|_H, V^\vee\right).
    \end{aligned}
    \]
\end{proof}

\vskip 5pt

In later proofs we also need to do some orbit analysis. We now introduce a version of Borel's lemma, following \cite[Proposition 8.2, 8.3]{MR4211018} and \cite[Proposition 2.5]{xue2020bessel}. Let $\mathcal{X}$ be a Nash manifold and $\mathcal{Z}$ a closed Nash submanifold. We put $\mathcal{U} = \mathcal{X} \backslash \mathcal{Z}$. Let $\mathcal{E}$ be a tempered vector bundle over $\mathcal{X}$, then the ``extension by zero'' of Schwartz sections over $\mathcal{U}$ gives us the following exact sequence:
\[
    0 \longrightarrow \Gamma^\mathcal{S}\left(\mathcal{U}, \mathcal{E}\right) \longrightarrow \Gamma^\mathcal{S}\left(\mathcal{X}, \mathcal{E}\right) \longrightarrow \Gamma^\mathcal{S}_{\mathcal{Z}}\left(\mathcal{X}, \mathcal{E}\right) \longrightarrow 0,
\]
where the symbol $\Gamma^\mathcal{S}$ stands for taking Schwartz sections, and
\[
    \Gamma^\mathcal{S}_{\mathcal{Z}}\left(\mathcal{X}, \mathcal{E}\right) := \Gamma^\mathcal{S}\left(\mathcal{X}, \mathcal{E}\right)\big/\Gamma^\mathcal{S}\left(\mathcal{U}, \mathcal{E}\right).
\]
Let $N_{\mathcal{Z}|\mathcal{X}}^*$ be the conormal bundle of $\mathcal{Z}$ and $\mathcal{N}_{\mathcal{Z}|\mathcal{X}}^*$ the complexification of $N_{\mathcal{Z}|\mathcal{X}}^*$.

\begin{pro}\label{P:BorelLemma}
There is a descending filtration on $\Gamma^\mathcal{S}_{\mathcal{Z}}\left(\mathcal{X}, \mathcal{E}\right)$ indexed by $k\in\mathbb{N}$
\[
    \Gamma^\mathcal{S}_{\mathcal{Z}}\left(\mathcal{X}, \mathcal{E}\right)=\Gamma^\mathcal{S}_{\mathcal{Z}}\left(\mathcal{X}, \mathcal{E}\right)_0 \supset \Gamma^\mathcal{S}_{\mathcal{Z}}\left(\mathcal{X}, \mathcal{E}\right)_1 \supset \Gamma^\mathcal{S}_{\mathcal{Z}}\left(\mathcal{X}, \mathcal{E}\right)_2 \supset \cdots,
\]
such that
\[
    \Gamma^\mathcal{S}_{\mathcal{Z}}\left(\mathcal{X}, \mathcal{E}\right) = \lim_{\longleftarrow} \Gamma^\mathcal{S}_{\mathcal{Z}}\left(\mathcal{X}, \mathcal{E}\right)\big/\Gamma^\mathcal{S}_{\mathcal{Z}}\left(\mathcal{X}, \mathcal{E}\right)_k,
\]
with each graded pieces are isomorphic to
\[
    \Gamma^\mathcal{S}\left(\mathcal{Z}, \Sym^k\mathcal{N}_{\mathcal{Z}|\mathcal{X}}^*\otimes\mathcal{E}\big|_\mathcal{Z}\right), \quad k\in\mathbb{N}.
\]
Moreover, if $\mathcal{X}$ is a $G$-Nash manifold, $\mathcal{Z}$ is stable under the $G$-action, and $\mathcal{E}$ is a tempered $G$-bundle, then this filtration is stable under the $G$-action.

\end{pro}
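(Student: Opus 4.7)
The plan is to construct the filtration by order of vanishing of the Taylor expansion along $\mathcal{Z}$, after first reducing to a standard model via a tubular neighborhood in the Nash category. Concretely, the Nash tubular neighborhood theorem identifies a tempered open neighborhood of $\mathcal{Z}$ in $\mathcal{X}$ with a tempered open neighborhood of the zero section in the normal bundle $N_{\mathcal{Z}|\mathcal{X}}$, and every element of $\Gamma^\mathcal{S}(\mathcal{X},\mathcal{E})$ can be written, modulo $\Gamma^\mathcal{S}(\mathcal{U},\mathcal{E})$, as a Schwartz section supported in this neighborhood. Hence I may assume that $\mathcal{X}$ is an open tempered neighborhood of the zero section of $N_{\mathcal{Z}|\mathcal{X}}$, that $\mathcal{Z}$ is the zero section, and that $\mathcal{E}$ is pulled back from $\mathcal{Z}$ along the bundle projection.

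\smallskip

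Next, I would define $\Gamma^\mathcal{S}_\mathcal{Z}(\mathcal{X},\mathcal{E})_k$ as the image in $\Gamma^\mathcal{S}_\mathcal{Z}(\mathcal{X},\mathcal{E})$ of the subspace of $\Gamma^\mathcal{S}(\mathcal{X},\mathcal{E})$ consisting of sections whose normal Taylor expansion along $\mathcal{Z}$ vanishes to order $k-1$. Taking the $k$-th normal Taylor coefficient yields a continuous linear map
\[
    T_k:\Gamma^\mathcal{S}_\mathcal{Z}(\mathcal{X},\mathcal{E})_k \longrightarrow \Gamma^\mathcal{S}\!\left(\mathcal{Z},\Sym^k\mathcal{N}^*_{\mathcal{Z}|\mathcal{X}}\otimes\mathcal{E}|_\mathcal{Z}\right),
\]
whose kernel is $\Gamma^\mathcal{S}_\mathcal{Z}(\mathcal{X},\mathcal{E})_{k+1}$ by construction. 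Surjectivity of $T_k$ is a tempered avatar of Borel's classical extension lemma: given a Schwartz section $s$ over $\mathcal{Z}$, view it as the coefficient of a degree $k$ homogeneous polynomial in the normal fiber, multiply by a Nash bump function in the normal coordinate, and thereby produce an element of $\Gamma^\mathcal{S}(\mathcal{X},\mathcal{E})$ whose first $k-1$ normal Taylor coefficients are zero and whose $k$-th coefficient is $s$. This identifies the graded pieces.

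\smallskip

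The delicate point is the completeness statement
\[
    \Gamma^\mathcal{S}_\mathcal{Z}(\mathcal{X},\mathcal{E})=\varprojlim_k\, \Gamma^\mathcal{S}_\mathcal{Z}(\mathcal{X},\mathcal{E})\big/\Gamma^\mathcal{S}_\mathcal{Z}(\mathcal{X},\mathcal{E})_k.
\]
For the surjectivity of the natural map into the inverse limit, I would take a compatible system, represent each class using the lift constructed above, and then rescale the Nash bump function at stage $k$ so that its support shrinks in the normal direction fast enough to force every Schwartz seminorm of the $k$-th term of the form $\|\,\cdot\,\|_n$ with $n\le k$ to be bounded by $2^{-k}$; the resulting series then converges in the Fréchet topology of $\Gamma^\mathcal{S}(\mathcal{X},\mathcal{E})$ and its class realizes the prescribed system. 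Injectivity amounts to $\bigcap_k\Gamma^\mathcal{S}_\mathcal{Z}(\mathcal{X},\mathcal{E})_k=0$: a Schwartz section all of whose normal Taylor coefficients vanish is flat along $\mathcal{Z}$, and such a section is the extension by zero of its restriction to $\mathcal{U}$, so it already lies in $\Gamma^\mathcal{S}(\mathcal{U},\mathcal{E})$.

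\smallskip

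For the $G$-equivariant refinement, I would choose the tubular neighborhood, the normal coordinates, and the cutoff functions $G$-invariantly; because $\mathcal{Z}$ is $G$-stable, $\mathcal{X}$ is a $G$-Nash manifold, and the groups of interest are almost linear Nash groups, this is available by an averaging argument in the Nash category. The main obstacle I expect is precisely the quantitative step in the completeness argument: unlike the classical smooth Borel lemma, one must control Schwartz seminorms uniformly along the non-compact manifold $\mathcal{Z}$ while simultaneously shrinking the cutoff widths in the normal direction, and this is the step where the tempered bundle hypothesis and the Nash-tempered estimates developed in \cite{MR4211018} are indispensable.
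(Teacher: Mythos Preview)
The paper does not prove this proposition; it is simply quoted from \cite[Proposition~8.2, 8.3]{MR4211018} and \cite[Proposition~2.5]{xue2020bessel}. Your sketch of the non-equivariant part (tubular neighborhood reduction, filtration by order of vanishing, identification of graded pieces via the transverse $k$-jet, and the Borel-type convergence argument for the inverse limit) is the standard route and matches what those references do.

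Your treatment of the $G$-equivariant refinement, however, has a real gap. You propose to choose the tubular neighborhood, normal coordinates, and cutoffs $G$-invariantly ``by an averaging argument in the Nash category.'' In every application in this paper the acting group is non-compact (e.g.\ $\Or(V'_{k+1})$, $\Sp(W_{k+1})$, $\J(W_k)$), and for such $G$ one cannot average; in fact a $G$-invariant tubular neighborhood of a closed orbit $\mathcal{Z}$ inside a flag variety typically does not exist, since any $G$-stable open set containing $\mathcal{Z}$ must contain all orbits in its closure. The correct argument is that you do not need any equivariant choice: the subspace of sections vanishing to order $\geq k$ along $\mathcal{Z}$ is defined intrinsically (for instance via the $k$-th power of the ideal of $\mathcal{Z}$), so the filtration is automatically $G$-stable once $\mathcal{Z}$ is. Likewise the map to the graded piece is the intrinsic transverse $k$-jet, which is natural in $(\mathcal{X},\mathcal{Z},\mathcal{E})$ and hence $G$-equivariant without any choices. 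The tubular neighborhood is only a local computational tool for surjectivity and completeness, and should be dropped from the equivariance step.
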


\vskip 5pt

\subsection{Representation theory of Jacobi groups}\label{Section Weil representation}

In this subsection, we recall some preliminaries on Weil representations and the representation theory of Jacobi groups. 

\subsubsection{\bf The Weil representation}\label{SubSection Weil representation}

Let $W$ be a symplectic vector space over $\BR$. We denote by 
\[
\Hei(W) := \BG_a\oplus W 
\] 
the Heisenberg group associated with $W$, with multiplication law 
\[
\left(z_1,w_1\right)+\left(z_2,w_2\right)=\left(z_1+z_2+\frac{1}{2}\left\langle w_1,w_2\right\rangle, w_1+w_2\right). 
\]
The symplectic group $\Sp(W)$ acts naturally on $\Hei(W)$ by 
\begin{equation}\label{equ: adj of Jacobi}
g \cdot \left(z,w\right) = \left(z, g \cdot w\right), \quad \mbox{for } z \in \BG_a,\; w \in W,\; g \in \Sp(W).
\end{equation}
We define the \emph{metaplectic Jacobi group} to be
\[
\wt{\J}(W) := \Mp(W) \ltimes \Hei(W),
\]
where the action of $\Mp(W)$ on $\Hei(W)$ factors through the natural action of $\Sp(W)$ on $\Hei(W)$ given by \Cref{equ: adj of Jacobi}. For a non-trivial additive character $\psi$ of $\mathbb R$, by the well-known Stone-von Neumann theorem, there is a unique irreducible representation $\omega_\psi$ of $\Hei(W)$ with central character $\psi$, called the \emph{Weil representation}. This representation $\omega_\psi$ uniquely extends to $\wt\J(W)$. By a slight abuse of notation, we use the same symbol to denote the restriction of $\omega_\psi$ to $\Mp(W)$, which is the one appearing in the main theorem.

\vskip 5pt

\subsubsection{\bf Fourier-Jacobi functors}\label{Rep of Jacobi}

In this section, we review the representation theory of Jacobi groups and reinterpret the Hom space in \eqref{fj MODEL} in terms of it. The point is that, in terms of Jacobi groups, this Hom space becomes completely parallel to the Bessel case recalled in \Cref{section bessel case}.

\vskip 5pt

We retain the notation in \Cref{SubSection Weil representation}. The \emph{Jacobi group} associated with $W$ is defined as
\[
\J(W) := \Sp(W) \ltimes \Hei(W),
\]
where the action of $\Sp(W)$ on $\Hei(W)$ is given by \Cref{equ: adj of Jacobi}. We denote by $\Smod_{\psi}(\J(W))$ the category of smooth Fr\'echet representations of moderate growth of $\J(W)$ with central character $\psi$, and by $\Smod_{\gen}(\Mp(W))$ the category of genuine smooth Fr\'echet representations of moderate growth of $\Mp(W)$. There are two functors: 
\begin{equation}\label{Functor F}
\begin{split}
\mathcal F_\psi: \Smod_{\gen}(\Mp(W))&\longrightarrow \Smod_{\psi}(\J(W))\\
\pi &\longmapsto \pi\otimes \omega_\psi
\end{split}
\end{equation}
and 
\begin{equation}\label{Functor G}
    \begin{split}
\mathcal G_\psi: \Smod_{\psi}(\J(W))&\longrightarrow \Smod_{\gen}(\Mp(W))\\
\sigma &\longmapsto \Hom_{\Hei(W)}\left(\omega_\psi, \sigma\right).
\end{split}
\end{equation}
These functors are called \emph{Fourier-Jacobi functors}. By \cite[Proposition 4.2]{MR2891317}, we have:

\begin{pro}\label{Sun thm}
These two functors $\mathcal{F}_\psi$ and $\mathcal{G}_\psi$ are mutually inverse equivalences of categories between 
$\Smod_{\gen}(\Mp(W))$ and $\Smod_{\psi}(\J(W))$.   
\end{pro}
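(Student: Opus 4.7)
The plan is to verify that $\mathcal{F}_\psi$ and $\mathcal{G}_\psi$ are well-defined and mutually quasi-inverse, reducing the content of the statement to a topological version of the Stone--von Neumann theorem for smooth Fr\'echet representations of moderate growth.

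First I would check well-definedness. For $\pi\in\Smod_{\gen}(\Mp(W))$, the tensor product $\pi\otimes\omega_\psi$ carries the diagonal action of $\Mp(W)$ (which descends to $\Sp(W)$ since both factors are genuine), together with the $\Hei(W)$-action through $\omega_\psi$ alone; these assemble into a $\J(W)$-representation of central character $\psi$, and moderate growth of the tensor product follows from moderate growth of each factor. Conversely, for $\sigma\in\Smod_\psi(\J(W))$, the space $\Hom_{\Hei(W)}(\omega_\psi,\sigma)$ inherits an $\Mp(W)$-action by combining the $\Mp(W)$-equivariant structure of $\omega_\psi$ with the $\Sp(W)$-action on $\sigma$; the genuine twists cancel and the resulting representation lies in $\Smod_{\gen}(\Mp(W))$.

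The main input is the Stone--von Neumann theorem in the smooth Fr\'echet setting: for any $\sigma\in\Smod_\psi(\J(W))$, the natural evaluation map
\[
\mathrm{ev}_\sigma:\Hom_{\Hei(W)}(\omega_\psi,\sigma)\otimes\omega_\psi\longrightarrow\sigma
\]
is a topological isomorphism of $\J(W)$-representations, which is the content of \cite[Proposition 4.2]{MR2891317}. Granting this, $\mathrm{ev}_\sigma$ provides a natural isomorphism $\mathcal{F}_\psi\circ\mathcal{G}_\psi\cong\mathrm{id}$; the map is manifestly $\Hei(W)$-equivariant, while $\Sp(W)$-equivariance is a direct verification from the definition of the $\Mp(W)$-action on the Hom space and the Jacobi action on $\omega_\psi$ and $\sigma$. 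For the other composition, given $\pi\in\Smod_{\gen}(\Mp(W))$, the natural evaluation
\[
\mathcal{G}_\psi(\mathcal{F}_\psi(\pi))=\Hom_{\Hei(W)}\left(\omega_\psi,\pi\otimes\omega_\psi\right)\cong\pi\otimes\Hom_{\Hei(W)}\left(\omega_\psi,\omega_\psi\right)\cong\pi
\]
recovers $\pi$ using Schur's lemma for the irreducible Fr\'echet $\Hei(W)$-representation $\omega_\psi$; the isomorphism is $\Mp(W)$-equivariant by construction.

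The main technical obstacle is not the algebraic bijectivity of these identifications, but their topological content: one must show that the natural topology on $\Hom_{\Hei(W)}(\omega_\psi,\sigma)$ turns it into a smooth Fr\'echet $\Mp(W)$-representation of moderate growth, and that the evaluation map above is a topological isomorphism compatible with the completed tensor product. Once this is in hand, the proposition follows by unwinding definitions; since both points are precisely what is established in \cite{MR2891317}, the proof reduces to extracting these statements and verifying the equivariance of the natural transformations $\mathrm{ev}_\sigma$ and $\pi\mapsto\mathcal{G}_\psi\mathcal{F}_\psi(\pi)$.
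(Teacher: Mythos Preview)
Your proposal is correct and matches the paper's approach: the paper does not give an independent proof but simply cites \cite[Proposition~4.2]{MR2891317}, the same reference you invoke for the topological Stone--von Neumann input. Your write-up is essentially an expanded justification of why that citation suffices, spelling out the well-definedness and the two natural isomorphisms that the paper leaves implicit.
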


\vskip 5pt

Recall that in the Fourier-Jacobi model (see \eqref{fj MODEL}), one studies the Hom space 
\begin{equation}\label{fj MODEL 5}
 \Hom_{H}\!\left(\pi \boxtimes \wt{\pi}, \, \omega_\psi\right) \cong \Hom_{H}\!\left(\pi \boxtimes (\wt{\pi}\otimes \overline{\omega}_{\psi}), \, \mathbbm{1}_H\right),
\end{equation}
where $H=\Delta\Mp(W)$ is the diagonal embedding of $\Mp(W)$ into $G=\Sp(W)\times \Mp(W)$, $\pi \in \Irr(\Sp(W))$ and $\wt{\pi} \in \Irr_{\gen}(\Mp(W))$. Note that by \Cref{Sun thm}, the tensor product $\wt{\pi} \otimes \overline{\omega}_{\psi} \cong \mathcal{F}_{\bar\psi}\left(\wt\pi\right)$ can be regarded as an irreducible representation of $\J(W)$.
Therefore, if we set 
\[
G' = \Sp(W) \times \J(W) \quad \mbox{and} \quad H' = \Delta\Sp(W),
\]
where $\Delta\Sp(W)$ denotes the diagonal embedding of $\Sp(W)$ into $G'$, then the right hand side of \Cref{fj MODEL 5} can be rewritten as
\begin{equation}\label{fj MODEL rewrite}
\Hom_{H'}\!\left(\pi \boxtimes \mathcal{F}_{\bar\psi}\left(\wt\pi\right), \, \mathbbm{1}_H\right).
\end{equation}
Comparing the Hom space in \Cref{fj MODEL rewrite} with the Bessel model \eqref{equation GGP Bessel}, one can heuristically regard the Jacobi group $\J(W)$ as a substitution of $\SO(V)$ in the Fourier-Jacobi model.
Alternatively, one can also consider corank two Fourier-Jacobi models as follows. Let $W^+ = W \oplus \mathcal{H}'$, where $\mathcal{H}'$ is the (symplectic) hyperbolic plane with a standard basis $\left\{e,f\right\}$. We write elements in $\Sp\left(W^+\right)$ as block matrices using the decomposition $W^+=\langle e\rangle \oplus W\oplus \langle f\rangle$. Then we can embed $\J(W)$ into $\Sp\left(W^+\right)$ by setting 
\begin{equation*}
       g \mapsto \begin{pmatrix}
       1 &  & \\
        & g & \\
        &  & 1
        \end{pmatrix} \quad \mbox{and} \quad 
(z, w) \mapsto \begin{pmatrix}
    1 & w^*_e & z\\
        & 1 &w\\
        &  & 1 
\end{pmatrix}, \quad \mbox{for } g\in\Sp(W)\mbox{ and }(z,w)\in \Hei(W). 
\end{equation*}
Here $w^*_e\in \Hom(W,\langle e \rangle)$ is defined by $w^*_e(v)= \langle w, v \rangle \cdot e$ for $v\in W$.
One can easily check that 
\begin{equation}\label{embedding J to Sp}
    \J(W) \cong  \Stab_{\Sp\left(W^+\right)}\left(e\right)
\end{equation}
via this embedding. Set 
\[
G''=\J\left(W\right)\times \Sp\left(W^+\right) \quad\mbox{and}\quad H''=\Delta\J\left(W\right),
\]
where $\Delta\J\left(W\right)$ denotes the diagonal embedding of $\J\left(W\right)$ into $G''$. Let $\wt \pi\in\Irr_{\gen}\left(\Mp\left(W\right)\right)$ and $\pi^+ \in \Irr\left(\Sp\left(W^+\right)\right)$. Then, the corank two Fourier-Jacobi model defined in \cite[\S 12]{gan2012symplectic} can be written as
\begin{equation}\label{fj MODEL rewrite II}
\Hom_{H''}\left(\mathcal{F}_{\bar\psi}\left(\wt\pi\right)\boxtimes \pi^+, \, \mathbbm{1}_{H''}\right).
\end{equation}
This is again an analogy of the Bessel model; in this case the Jacobi group $\J\left(W\right)$ plays the role of $\SO\left(V'\right)$. We remark that \Cref{conj: GP in introduction} for above Hom space can be reduced to the case of \eqref{fj MODEL rewrite}. We refer the readers to \cite{chen2023multiplicity} for this reduction step. 

\vskip 5pt

Next we define the notion of ``parabolic induction'' for Jacobi groups, and show that the functors defined in \eqref{Functor F} and \eqref{Functor G} are compatible with ``parabolic inductions" on both sides. We should remark here that ``parabolic subgroups'' for Jacobi groups defined below are \emph{not} the usual notion of parabolic subgroups.

\vskip 5pt

We first define the notion of ``parabolic subgroups''. Consider the case of ``maximal parabolics''. Recall that for classical groups, maximal parabolic subgroups are stabilizers of points in Grassmannians; thus, we need to find suitable substitutions for Grassmannians. For each $0\leq k\leq \frac{1}{2}\dim W$, let $\mathcal{F}_{W,k}$ be the flag variety of $\Sp(W)$ consisting of the $k$-dimensional isotropic subspaces of $W$. Consider the dual tautological bundle 
\begin{equation}\label{flag J}
    \mathcal{F}^{\J}_{W,k}\coloneqq \left\{\left(X,v^*\right)\,\big|\, X\in \mathcal{F}_{W,k}, v^*\in X^*\right\}
\end{equation}
of $\mathcal{F}_{W,k}$.
There is a natural transitive action of $\J(W)$ on $\mathcal{F}^{\J}_{W,k}$ given by
\begin{align*}
    g \cdot \left(X,v^*\right) &= \left(g\cdot X, g \cdot v^*\right),   &  \mbox{for } g&\in \Sp\left(W\right);\\[5pt]
    \left(z,w\right) \cdot \left(X,v^*\right) &= \left(X, v^*+ \iota_X\left(w\right)\right), &  \mbox{for } \left(z,w\right)&\in \Hei\left(W\right).
\end{align*}
Here $X^*$ is the linear dual of $X$; $g\cdot v^*$ is a linear functional on $g\cdot X$ defined by the formula $\left(g\cdot v^*\right)\left(v\right) = v^*\left(g^{-1}\cdot v\right)$ for $v\in g\cdot X$; and $\iota_X: W \rightarrow W/X^\perp\cong X^*$ is the natural porjection map.
For each point $\left(X,v^*\right)\in \mathcal{F}^{\J}_{W,k}$, we define a corresponding ``maximal parabolic subgroup''
\[
  P_{X,v^*} \coloneqq \Stab_{\J(W)}\left(X,v^*\right).
\]
All such subgroups $P_{X,v^*}$ are conjugate as $\left(X,v^*\right)$ varies in $\mathcal{F}^{\J}_{W,k}$. To describe them explicitly, we choose a point $\left(X,0\right)\in\mathcal{F}^{\J}_{W,k}$.
Let $W_0 = X^\perp/X$, equipped with the symplectic form induced from $W$. Then
\[
  P_{X,0} \cong Q_{X}\rtimes \Hei\left(X^\perp\right),
\]
where $Q_{X}$ is the maximal parabolic subgroup of $\Sp\left(W\right)$ stabilizing the isotropic subspace $X$, and for each subspace $Y\subset W$, possibly degenerate, we set $\Hei\left(Y\right)=\mathbb{G}_a\oplus Y$, viewed as a subgroup of $\Hei\left(W\right)$. There is a natural projection
\[
  P_{X,0} \twoheadrightarrow M_{X,0} \coloneqq \GL\left(X\right)\times \J\left(W_0\right),
\]
and we refer to $M_{X,0}$ as the ``Levi quotient'' of $P_{X,0}$. To simplify notations, when there is no confusion, we shall suppress ``$0$'' from the subscript and write
\begin{equation*}%\label{parabolic J}
  P_{X,0} = P_{X}\quad\mbox{and}\quad M_{X,0} = M_{X}.
\end{equation*}
For each $\left(X,v^*\right)\in \mathcal{F}^{\J}_{W,k}$, put
\[
  X_{v^*} \coloneqq \left\{ x + v^*(x)e \,\big|\, x\in X \right\} \subseteq W^+.
\]
One checks that $X_{v^*}$ is an isotropic subspace of $W^+$ with $\dim X_{v^*} = k$. This gives a $\J(W)$-equivariant embedding
\begin{equation*}%\label{ik}
  \begin{split}
    i_k : \mathcal{F}^{\J}_{W,k} &\longrightarrow \mathcal{F}_{W^+,k},\\
    \left(X,v^*\right) &\longmapsto X_{v^*},
  \end{split}
\end{equation*}
where $\J(W)$ acts on $\mathcal{F}_{W^+,k}$ through the isomorphism \eqref{embedding J to Sp}. The image of $i_k$ can be described as
\[
  i_k\left(\mathcal{F}^{\J}_{W,k}\right) = \left\{ Y \in \mathcal{F}_{W^+,k} \,\big|\, Y \subseteq \left\langle e\right\rangle^\perp\mbox{ and }\left\langle e\right\rangle \nsubseteq Y \right\}.
\]
There is also a natural $\J(W)$-equivariant surjection
\begin{equation*}%\label{pk}
  \begin{split}
    p_k : \mathcal{F}^{\J}_{W,k} &\longrightarrow \mathcal{F}_{W,k},\\
    \left(X,v^*\right) &\longmapsto X,
  \end{split}
\end{equation*}
where the action of $\J(W)$ on $\mathcal{F}_{W,k}$ factors through the quotient map $p:\J(W) \twoheadrightarrow \Sp(W)$.
A point $\left(X,v^*\right)\in \mathcal{F}^{\J}_{W,k}$ specifies maximal parabolic subgroups $Q_{X_{v^*}}$ and $Q_{X}$ of $\Sp\left(W^+\right)$ and $\Sp\left(W\right)$, stabilizing isotropic subspaces $X_{v^*}$ and $X$ respectively. By using maps $i_k$ and $p_k$ defined above, one deduces that 
\begin{equation}\label{Parabolic compatible}
    P_{X,v^*}= Q_{X_{v^*}}\cap \Sp\left(W^+\right) \quad \mbox{and} \quad p\left(P_{X,v^*}\right) = Q_{X}.
\end{equation}
These discussions describe ``maximal parabolic subgroups'' of Jacobi groups. We shall also call ``maximal parabolic subgroups'' \emph{corank}-$1$ ``parabolic subgroups''.

\vskip 5pt

General ``parabolic subgroups'' of Jacobi groups can be defined similarly: consider a flag $\underline{X}$ of isotropic subspaces
\[
  X_{1} \subseteq X_{2} \subseteq \cdots \subseteq X_{r} \subseteq W,
\]
and a vector $v^* \in X_{r}^*$. Let $P_{\underline{X},v^*}$ be the ``parabolic subgroup’’ of $\J(W)$ stabilizing this flag $\underline{X}$ and $v^*$. The group $\J(W)$ acts transitively on such pairs of flags and dual vectors, hence all stabilizers are conjugate and isomorphic. We fix a representative $\left(\underline{X},0\right)$ and set $P_{\underline{X}}=P_{\underline{X},0}$ to simplify the notation. Then we have
\[
  P_{\underline{X}} \cong Q_{\underline{X}} \rtimes \Hei\left(X_{r}^\perp\right),
\]
where $Q_{\underline{X}}$ is the parabolic subgroup of $\Sp(W)$ stabilizing the above flag $\underline{X}$, and the ``Levi quotient'' of $P_{\underline{X}}$ is defined to be
\[
  M_{\underline{X}} \cong \GL\left(X_1\right)\times\GL\left(X_2/X_1\right)\times \cdots \times \GL\left(X_r/X_{r-1}\right)\times \J\left(W_0\right),
\]
with $W_0 \cong X_{r}^\perp/X_{r}$.

\vskip 5pt

Next we define the notion of ``parabolic induction'' for Jacobi groups. In the case of ``maximal parabolics'', let $X\in\mathcal{F}_{W,k}$ be an isotropic subspace of $W$, and $P_{X} = P_{X,0}$ the ``maximal parabolic subgroup'' of $\J(W)$ associated with $(X,0)\in\mathcal{F}_{W,k}^\J$. For any $\tau\in \Smod(\GL(X))$ and $\pi_0^\J\in\Smod_{\psi}(\J(W_0))$, where $W_0 = X^\perp/X$, we define the ``normalized parabolic induction''
\begin{equation}\label{normalized Schwartz parabolic induction Jacobi}
\begin{split}
     \tau\rtimes \pi_0^\J\coloneqq & \,\ind_{P_{X}}^{\J\left(W\right)}\left(\tau\boxtimes \pi_0^\J\right)\\
     =&\,\unind_{P_{X}}^{\J\left(W\right)}\left(\left(\tau\boxtimes \pi_0^\J\right)\otimes \delta_{P_{X} }^{\frac{1}{2}}\right).%\quad \mbox{normalized induction},
\end{split}
\end{equation}
Here $\tau\boxtimes \pi_0^\J$ is a representation of the ``Levi quotient'' $M_X = \GL(X)\times\J(W_0)$, and we also regard it as a representation of $P_{X}$ by inflation. The modulus character $\delta_{P_{X}}$  factors through the projection to $\GL(X)$ and is given by 
\begin{equation}\label{modular character Q_J}
    \delta_{P_{X} }(a)= |\det a|^{\dim W_0+k+2},\quad\mbox{for } a\in \GL(X). 
\end{equation}
Similarly, for a general ``parabolic subgroup'' $P_{\underline{X}}$ of $\J(W)$, where $\underline{X}$ is a flag of isotropic subspaces of $W$ as before, and for any $\tau_i\in \Smod(\GL(X_i/X_{i-1}))$ and $\pi_0^\J \in \Smod_{\psi}\left(\J(W_0)\right)$, we define the normalized parabolic induction to be
\[
  \tau_1\times \cdots \times \tau_r\rtimes \pi_0^\J \coloneqq \ind_{P_{\underline{X}}}^{\J\left(W\right)} \left(\tau_1\boxtimes \cdots \boxtimes \tau_r \boxtimes \pi_0^\J\right).
\]

\vskip 5pt

On the other hand, for metaplectic groups we already have the notions of parabolic subgroups and inductions. Parabolic subgroups of $\Mp(W)$ are precisely preimages of those of $\Sp(W)$. Let $X$ be an isotropic subspace of $W$, $Q_X$ the parabolic subgroup of $\Sp(W)$ stabilizing $X$, and $\wt{Q}_{X}$ the preimage of $Q_{X}$ in $\Mp(W)$. We let $U_{X}$ be the unipotent radical of $\wt{Q}_X$ and $\wt{M}_{X}$ the Levi quotient. Then we have 
\[
\wt{M}_{X}\cong \wt{\GL}\left(X\right) \times _{\mu_2} \Mp\left(W_0\right),
\]
where $\wt{\GL}\left(X\right)$ is a double cover of ${\GL}\left(X\right)$, with the underlying set 
\[
\wt{\GL}\left(X\right)= {\GL}\left(X\right) \times \{ \pm 1\}
\]
and the multiplication law 
\[
\left(g_1,\epsilon_1\right)\cdot \left(g_2,\epsilon_2\right)= \left(g_1g_2, \epsilon_1\epsilon_2\cdot \left(\det g_1,\det g_2\right)_\BR\right).
\]
Here $\left(\cdot,\cdot\right)_\BR$ denotes the Hilbert symbol. According to \cite[\S 1]{adams1998genuine}, there is a genuine character $\chi_\psi$ of $\wt{\GL}\left(X\right)$. By tensoring with this genuine character $\chi_\psi$, one obtains a bijection between the set of equivalence classes of representations of ${\GL}\left(X\right)$ and the set of equivalence classes of genuine representations of $\wt{\GL}\left(X\right)$.
Let $\tau\in\Smod\left({\GL}\left(X\right)\right)$ and $\wt\pi_0\in\Smod_{\gen}\left(\Mp\left(W_0\right)\right)$. We can form the normalized parabolic induction
\[
\begin{split}
\tau \rtimes \wt\pi_0 \coloneqq &\,\ind_{\wt{Q}_{X}}^{\Mp(W)}\left(\tau\chi_\psi\boxtimes \wt\pi_0\right)\\
=&\,\unind_{\wt{Q}_{X}}^{\Mp(W)}\left(\left(\tau\chi_\psi\boxtimes \wt\pi_0\right)\otimes \delta_{\wt{Q}_{X}}^{\frac{1}{2}}\right).
\end{split}
\]
Here the modulus character $\delta_{\wt{Q}_{X}}$ factors through the projection to $\GL(X)$, and is given by 
\begin{equation}\label{modular character Q}
    \delta_{\wt{Q}_{X}}\left(a\right)= |\det a|^{\dim W_0+k+1},\quad\mbox{for } a\in \GL\left(X\right). 
\end{equation}
Similarly, for a general parabolic subgroup $\wt{Q}_{\underline{X}}$ of $\Mp(W)$, where $\underline{X}$ is a flag of isotropic subspaces of $W$ as before, and for any $\tau_i\in \Smod(\GL(X_i/X_{i-1}))$ and $\wt\pi_0 \in \Smod_{\gen}\left(\Mp(W_0)\right)$, we define the normalized parabolic induction to be
\[
  \tau_1\times \cdots \times \tau_r\rtimes \wt\pi_0 \coloneqq \ind_{Q_{\underline{X}}}^{\Mp\left(W\right)} \left(\tau_1\chi_\psi\boxtimes \cdots \boxtimes \tau_r\chi_\psi \boxtimes \wt\pi_0\right).
\]

\begin{pro}\label{F_k and G_k vs parabolic}
The functors $\mathcal{F}_\psi$ and $\mathcal{G}_\psi$ defined in \Cref{Functor F} and \Cref{Functor G} are compatible with ``parabolic induction functors" on both sides. i.e., we have 
\begin{equation}\label{F_k}
  \mathcal{F}_\psi\left(\tau_1\times \cdots \times \tau_r \rtimes \wt\pi_0\right) \cong \tau_1\times \cdots \times \tau_r\rtimes \mathcal{F}_\psi\left(\wt\pi_0\right)
\end{equation}
and likewise
\begin{equation}\label{G_k}
\mathcal{G}_\psi\left(\tau_1\times \cdots \times \tau_r \rtimes \pi_0^\J\right) \cong \tau_1\times \cdots \times \tau_r\rtimes \mathcal{G}_\psi\left(\pi_0^\J\right).
\end{equation}
\end{pro}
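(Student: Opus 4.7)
The plan is to prove \eqref{F_k} first; the isomorphism \eqref{G_k} then follows formally by applying $\mathcal{G}_\psi$ to both sides of \eqref{F_k} with $\wt\pi_0$ replaced by $\mathcal{G}_\psi\left(\pi_0^\J\right)$, invoking \Cref{Sun thm}. By transitivity of parabolic induction and induction on $r$, it suffices to handle the maximal (corank one) case, so assume $r=1$. Fix an isotropic subspace $X\subset W$, write $W_0 = X^\perp/X$, and choose a complete polarization $W = X \oplus W_0 \oplus X^*$ compatible with $X$. Let $\wt{Q}_X\subset \Mp(W)$ and $P_X\subset \J(W)$ denote the associated ``parabolic subgroups''.

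The structural input is the mixed Schr\"odinger model for $\omega_\psi$: one has a $\wt\J(W)$-equivariant realization
\[
  \omega_\psi \;\cong\; \mathcal{S}(X^*) \,\wh\otimes\, \omega_{\psi, W_0},
\]
together with an explicit description of the action of $\wt{Q}_X\ltimes \Hei(W)$. In particular, on the Levi component $\wt\GL(X)\times \Mp(W_0)\subset \wt{Q}_X$, the action is the natural action of $\GL(X)$ on $\mathcal{S}(X^*)$ twisted by $\chi_\psi\cdot|\det|^{1/2}$, tensored with the Weil action of $\Mp(W_0)$ on $\omega_{\psi, W_0}$.

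I would then apply a projection formula in the Schwartz-induction setup (a direct consequence of \Cref{Frobenious}): for any $\sigma \in \Smod(\wt{Q}_X)$,
\[
  \ind_{\wt{Q}_X}^{\Mp(W)}(\sigma) \,\otimes\, \omega_\psi \;\cong\; \ind_{\wt{Q}_X \ltimes \Hei(W)}^{\wt\J(W)}\!\left(\sigma \otimes \omega_\psi\big|_{\wt{Q}_X \ltimes \Hei(W)}\right).
\]
Specializing to $\sigma = \tau\chi_\psi \boxtimes \wt\pi_0$ and substituting the mixed-model description, the right-hand side unfolds to a Schwartz induction from $P_X$ in $\J(W)$ of the representation
\[
  \big(\tau \boxtimes (\wt\pi_0 \otimes \omega_{\psi, W_0})\big) \otimes \delta_{\wt{Q}_X}^{1/2}\cdot |\det|^{1/2},
\]
and the identity $\delta_{\wt{Q}_X}\cdot|\det| = \delta_{P_X}$ read off from \eqref{modular character Q}--\eqref{modular character Q_J} converts this into exactly $\tau \rtimes \mathcal{F}_\psi(\wt\pi_0)$. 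The genuine characters $\chi_\psi$ coming from the metaplectic parabolic and the Weil model cancel, so the result descends to a genuinely non-genuine representation of $\J(W)$, as required.

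The main obstacle is the careful bookkeeping at the interface of three different normalizations: the $|\det|^{1/2}$ twist coming from the $\GL(X)$-action on $\mathcal{S}(X^*)$ in the mixed model, the discrepancy $\delta_{P_X}/\delta_{\wt{Q}_X} = |\det|$ between the two normalized induction conventions, and the genuine character $\chi_\psi$ of $\wt\GL(X)$. Verifying that these three factors conspire exactly is the delicate point; once this is done, and once the projection formula is set up cleanly in the framework of almost linear Nash groups reviewed in \Cref{section: Schwartz}, the isomorphism \eqref{F_k} is a formal consequence, and \eqref{G_k} follows at once from \Cref{Sun thm}.
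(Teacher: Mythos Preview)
Your approach is correct and essentially the same as the paper's: both reduce to the maximal parabolic case by induction in stages, and for that case the paper simply defers to \cite[Proposition~4.3]{liu2013uniqueness}, whose argument proceeds exactly via the mixed Schr\"odinger model and the projection-formula computation you outline. One minor quibble: the projection formula $\ind_H^G(\sigma)\otimes\rho\cong\ind_H^G(\sigma\otimes\rho|_H)$ is a direct identity on the underlying function spaces rather than a consequence of \Cref{Frobenious}, and the mixed-model isomorphism you write is only $\wt{Q}_X\ltimes\Hei(W)$-equivariant, not $\wt\J(W)$-equivariant (though that is all you use).
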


\vskip 5pt

\begin{proof}
In the case of ``maximal parabolic subgroups'', the desired conclusion is shown during the proof of \cite[Proposition 4.3]{liu2013uniqueness}. For general ``parabolic subgroups'', the desired conclusion follows from induction in stages.

\end{proof}

\vskip 5pt

As a direct consequence of \Cref{Sun thm} and \Cref{F_k and G_k vs parabolic}, we obtain a Langlands classification of irreducible representations of Jacobi groups. Let $\pi^\J\in\Smod_\psi(\J(W))$ be an irreducible representation. Then there exists a ``parabolic induction'' 
\[
\tau_1|\det|^{a_1}\times\cdots\times \tau_r|\det|^{a_r}\rtimes\pi_0^\J,
\]
such that $\pi^\J$ is the unique irreducible quotient of it. Here for each $1\leq i\leq r$, $\tau_i$ is either a unitary character of $\GL_1(\BR)$ or a discrete series of $\GL_2(\BR)$, $a_i$ is a real number, such that $ a_1 \geq \cdots \geq a_r>0$; $\pi_0^\J$ is an irreducible representation of some appropriate Jacobi group, such that $\mathcal{G}_\psi\left(\pi_0^\J\right)$ is tempered. We denote by 
\begin{equation*}%\label{Langlands quotient Jacobi}
        \pi = LQ\left(\tau_1|\det|^{a_1}\times\cdots\times \tau_r|\det|^{a_r}\rtimes\pi_0^\J\right),
\end{equation*}
and call $\pi$ the ``Langlands quotient'' of $\tau_1|\det|^{a_1}\times\cdots\times \tau_r|\det|^{a_r}\rtimes\pi_0^\J$. In the case that $\mathcal{F}_\psi(\pi^\J)$ is tempered, we say that $\pi^\J$ is tempered.

\vskip 5pt

\subsection{MVW-involution}\label{sec MVW}

Let $V$ be a non-degenerate quadratic or symplectic space over $\mathbb R$, and let $G = G(V)$ be the isometry group associated with $V$ (or the metaplectic group if $V$ is symplectic). There exists a remarkable covariant exact functor on $\Smod(G)$ (or $\Smod_{\gen}(G)$ if $G$ is a metaplectic group), called the \emph{MVW-involution}:

\begin{thm}\label{thm MVW}
There is a covariant exact functor
\[
    \MVW : \Smod(G) \longrightarrow \Smod(G) \quad \left(\mbox{or }\Smod_{\gen}(G) \longrightarrow \Smod_{\gen}(G)\mbox{ if $G$ is metaplectic}\right),
\]
sending each (genuine) representation $\pi$ to another $\pi^\MVW$, satisfies the following properties:
\begin{enumerate}
    \item $\MVW$ is involutive, i.e., $\left(\pi^\MVW\right)^\MVW \cong \pi$ for all $\pi \in \Smod(G)$ (or $\Smod_{\gen}(G)$ if $G$ is a metaplectic group);

    \vskip 5pt
    
    \item if $\pi \in \Irr(G)$ (or $\Irr_{\gen}(G)$ if $G$ is a metaplectic group), then $\pi^\MVW \cong \pi^\vee$;

    \vskip 5pt
    
    \item for a parabolic induced representation $\tau \rtimes \pi_0$, one has
      \[
        \left(\tau \rtimes \pi_0\right)^\MVW \cong \tau  \rtimes \pi_0^\MVW.
      \]
\end{enumerate}
\end{thm}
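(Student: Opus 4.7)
The plan is to realize $\MVW$ as composition with an explicit involutive automorphism $\theta$ of $G$, following the original construction of Moeglin--Vigneras--Waldspurger over $p$-adic fields and adapting it to the Archimedean and metaplectic settings.

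\emph{Step 1 (construction of $\theta$).} For $G=\Sp(W)$, fix a symplectic basis $\{e_i,f_i\}_{i=1}^{n}$ and let $\delta\in\GSp(W)$ act by $\delta(e_i)=e_i$, $\delta(f_i)=-f_i$; this element has similitude $-1$ and $\delta^{2}=1$, so $\theta(g):=\delta g\delta^{-1}$ is a genuine involutive automorphism of $\Sp(W)$. For $G=\Or(V)$, take $\theta$ to be conjugation by a reflection in a non-isotropic line (or simply $\mathrm{id}$, since every element of $\Or(V)$ is already conjugate to its inverse). For $G=\Mp(W)$, lift $\theta$ to an involutive automorphism $\tilde\theta$ of the double cover by a direct verification that $\delta$ preserves the Rao cocycle defining the cover. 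Define the endofunctor $\pi\mapsto \pi^{\MVW}:=\pi\circ\theta$ on the same underlying Fr\'echet space; it is manifestly covariant and exact on $\Smod(G)$ (respectively $\Smod_{\gen}(G)$).

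\emph{Step 2 (properties (1) and (2)).} Property (1) is immediate from $\theta^{2}=\mathrm{id}$. For property (2), the crucial input is the MVW identity: for every $g\in G$, the element $\theta(g)$ is $G$-conjugate to $g^{-1}$. This is a statement purely about rational conjugacy classes in classical groups and passes verbatim from the non-Archimedean to the Archimedean case, via the classification of semisimple orbits and the structure of unipotent pieces. Granting the identity, the Harish--Chandra characters satisfy $\Theta_{\pi^{\MVW}}(g)=\Theta_{\pi}(\theta(g))=\Theta_{\pi}(g^{-1})=\Theta_{\pi^{\vee}}(g)$ on the regular semisimple locus, and the character theorem for irreducible Casselman--Wallach representations forces $\pi^{\MVW}\cong \pi^{\vee}$.

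\emph{Step 3 (property (3)).} The automorphism $\theta$ sends a parabolic $P=MN$ stabilizing a flag $\underline X$ to the parabolic stabilizing $\delta\cdot\underline X$, which is conjugate to the opposite parabolic $\bar P$; on each $\GL$-factor of the Levi it induces $g\mapsto g^{-t}$ (up to a Weyl-group element), while on the residual classical or metaplectic factor it restricts to the corresponding MVW involution. By induction in stages together with the standard identification of normalized parabolic induction from $P$ and from $\bar P$ at the level of Langlands data, one obtains $(\tau\rtimes\pi_{0})^{\MVW}\cong\tau^{\vee}\rtimes\pi_{0}^{\MVW}$. The claimed equality $(\tau\rtimes\pi_{0})^{\MVW}\cong\tau\rtimes\pi_{0}^{\MVW}$ then follows from the Weyl-element identification $\tau^{\vee}\rtimes\sigma\cong\tau\rtimes\sigma$, valid for parabolic induction from Siegel-type parabolics of classical and metaplectic groups.

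\emph{Main obstacle.} The principal delicate point is the metaplectic case: one has to check that $\delta$ lifts to an automorphism of $\Mp(W)$ intertwining with both the Weil representation $\omega_{\psi}$ and the genuine character $\chi_{\psi}$ appearing in parabolic induction, so that $\tilde\theta$ descends to a bona fide functor on $\Smod_{\gen}(\Mp(W))$ and the twist in Step 3 tracks correctly through the $\chi_{\psi}$-factors. This is a bookkeeping cocycle computation that is less uniform than in the $p$-adic setting but ultimately parallels it; all other steps are essentially formal consequences of the character-theoretic MVW identity and reductive-group combinatorics.
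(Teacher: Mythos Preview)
The paper does not prove this theorem; it simply cites \cite[Chapter 4.II.1]{MR1041060}. Your sketch is essentially the content of that reference, so your approach is the same as the one the paper defers to.

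One small correction to Step 3: with your chosen $\delta$ (identity on the $e_i$, $-1$ on the $f_i$), the isotropic flag $\underline X$ built from the $e_i$ is \emph{fixed} by $\delta$, so $\theta$ stabilizes $P$ rather than sending it to the opposite parabolic. Moreover, for $g\in\GL(X)\subset\Sp(W)$ one computes $\delta g\delta^{-1}=g$ on $X$, so $\theta$ restricts to the identity on the $\GL$-factor of the Levi. Hence $(\tau\rtimes\pi_0)^{\MVW}\cong \tau\rtimes\pi_0^{\MVW}$ drops out directly, without passing through $\tau^{\vee}$ and the isomorphism $\tau^{\vee}\rtimes\sigma\cong\tau\rtimes\sigma$. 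Your detour is not wrong, but it is unnecessary and slightly inconsistent with the specific $\delta$ you wrote down.
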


\vskip 5pt

\begin{proof}
  See \cite[Chapter 4.II.1]{MR1041060}.
  
\end{proof}

\vskip 5pt

\begin{rmk}
By \Cref{Sun thm}, we may transport the MVW-functor to $\Smod_{\psi}(\J(W))$ via the category equivalence with $\Smod_{\gen}(\Mp(W))$. In particular, the resulting functor on $\Smod_{\psi}(\J(W))$ satisfies the same properties as in \Cref{thm MVW}.
\end{rmk}

\vskip 5pt

\subsection{Local theta lifts}
In this subsection, we recall some facts about theta lifts for symplectic-orthogonal reductive dual pairs. 

\vskip 5pt

\subsubsection{\bf Theta lifting and Howe duality}\label{localthteta}

We fix a non-trivial additive character $\psi$ of $\BR$. Let $V$ and $W$ be a quadratic and a symplectic space over $\BR$, respectively. Then as explicated in \cite{MR1286835}, one can associate a Weil representation $\omega=\omega_{V,W,\psi}$ to the group
\[
\begin{cases}
\Mp(W)\times \Or(V), \quad &\text{if $\dim V$ is odd};\\[5pt]

\Sp(W)\times \Or(V), \quad &\text{if $\dim V$ is even}.
\end{cases}
\]
Given $\pi\in \Irr_{\gen}(\Mp(W))$ or $\pi\in\Irr(\Sp(W))$, one can consider the maximal $\pi$-isotypic quotient 
\[
\pi \boxtimes \Theta_{V,W,\psi}(\pi)
\]
of $\omega$. Here $\Theta_{V,W,\psi}(\pi)$ is the multiplicity space, which is indeed a finite length representation of $\Or(V)$. The celebrated Howe duality conjecture, which was proved by Waldspurger \cite{MR1159105} and Gan-Takeda \cite{MR3454380}, asserts that:
\begin{itemize}
    \item the maximal semi-simple quotient $\theta_{V,W,\psi}(\pi)$ of $\Theta_{V,W,\psi}(\pi)$ is either zero or irreducible;
    
    \vskip 5pt

    \item if $\pi_1\not\simeq \pi_2$ are two non-isomorphic irreducible smooth representations such that both $\theta_{V,W,\psi}(\pi_1)$ and $\theta_{V,W,\psi}(\pi_2)$ are nonzero, then $\theta_{V,W,\psi}(\pi_1)\not\simeq \theta_{V,W,\psi}(\pi_2)$. 
\end{itemize}
\vskip 5pt
Similarly, given $\sigma \in \Irr(\Or(V))$, we obtain smooth finite length representations $\Theta_{W,V, \psi}(\sigma)$ and $\theta_{W, V, \psi}(\sigma)$ of $\Sp(W)$ or $\Mp(W)$. 

\vskip 5pt

\subsubsection{\bf (Almost) equal rank theta lifts and Prasad's conjecture}\label{Prasad conjecture}

To prove our main result using a seesaw argument, we need to describe the local theta lift for even orthogonal-symplectic dual pairs in terms of the LLC. In the case that 
\[
    |\dim V - \dim W| \leq 2,
\]
such descriptions has been provided by the so-called Prasad's conjecture (now a theorem due to \cite{adams1998genuine} \cite{MR1403972} \cite{MR2175409}).

\begin{thm}\label{T:AlmostEqualrk}
Suppose that
\[
    \dim V= \dim W+2
\]
is even. Let $\phi: \WW_{\BR}\rightarrow \mathrm{SO}(M)$ be a tempered L-parameter of $\Sp(W)$, where $M$ is the orthogonal rerpesentation of $\WW_\BR$ associated to $\phi$. Let $\pi\in \Pi_\phi(\Sp(W))$. If its small theta lift $\sigma\coloneqq \theta_{V,W,\psi}(\pi)$ is non-zero, then: 
\begin{enumerate}
    \item the restriction of $\sigma$ to $\SO(V)$ is irreducible, which we shall still denote by $\sigma$;

    \vskip 5pt

    \item as a representation of $\SO(V)$, we have $\sigma\in\Pi_{\theta(\phi)}(\SO(V))$, where $\theta(\phi)$ is a tempered L-parameter of $\SO(V)$ corresponding to the orthogonal representation
    \[
        \theta(M) = M \oplus \mathbbm{1}
    \]
    of $\WW_\BR$; moreover, if we let $c=\disc(V)$, then under the natural embedding $A_M^+\hookrightarrow A_{\theta(M)}^+$, we have
    \[
        \calJ_{\Fw_{-c}}\left(\sigma\right)\big|_{A^+_M}=\calJ_{\Fw}\left(\pi\right).
    \]
\end{enumerate}
\vskip 5pt
Moreover, for each $\sigma\in \Pi_{\theta(\phi)}(\SO(V))$, there exists a $\pi\in \Pi_{\phi}(\Sp(W))$ such that $\sigma\cong\theta_{V,W,\psi}(\pi)\big|_{\SO(V)}$.
\end{thm}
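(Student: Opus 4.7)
The plan is to establish the three assertions in succession: irreducibility of the restriction $\sigma|_{\SO(V)}$, identification of the L-parameter together with the component group character, and surjectivity of the theta correspondence onto $\Pi_{\theta(\phi)}(\SO(V))$. Throughout, I would combine Howe duality together with Kudla's filtration (to reduce parabolic induction to discrete series) and the explicit description of discrete-series theta lifts due to Adams--Barbasch--Vogan.

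For (1), an irreducible representation of $\mathrm{O}(V)$ remains irreducible upon restriction to $\SO(V)$ iff it is not isomorphic to its twist by $\det$. I would rule out $\sigma \cong \sigma \otimes \det$ using the Kudla--Rallis conservation relation: twisting by $\det$ corresponds to transferring between the two even orthogonal forms of the same dimension, and if both $\sigma$ and $\sigma \otimes \det$ arose as theta lifts of $\pi$ at the almost-equal-rank first occurrence, this would contradict the conservation dichotomy in the tempered range.

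For (2), I would first reduce to the discrete-series case via Kudla's filtration on the Jacquet module of the Weil representation: if $\pi \cong \tau \rtimes \pi_0$ with $\tau$ a representation of a $\GL$ factor and $\pi_0$ a tempered representation of a smaller $\Sp(W_0)$ that is a discrete series, then
\[
\theta_{V,W,\psi}(\pi) \cong \tau \rtimes \theta_{V_0,W_0,\psi}(\pi_0)
\]
for the corresponding smaller quadratic space $V_0$, and the assertion $\theta(M) = M \oplus \mathbbm{1}$ is stable under such induction. In the discrete-series case, the L-parameter of the theta lift can be read off directly from the Adams--Barbasch--Vogan formulas for real dual pairs, yielding exactly $M \oplus \mathbbm{1}$. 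The character identity
\[
\mathcal{J}_{\Fw_{-c}}(\sigma)\big|_{A_M^+} = \mathcal{J}_{\Fw}(\pi)
\]
is then obtained by chasing the Shimura--Waldspurger definition of the LLC for $\Mp(W)$ through a seesaw argument for the pair
\[
\bigl(\mathrm{O}(V) \times \mathrm{O}(V'),\; \Sp(W) \times \Sp(W)\bigr),
\]
with $V'$ the $(\dim W+1)$-dimensional quadratic space used to define the Shimura--Waldspurger correspondence; the extra trivial summand $\mathbbm{1}$ in $\theta(M)$ records precisely the passage from $V'$ odd-dimensional to $V$ even-dimensional.

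For (3), both $\Pi_\phi(\Sp(W))$ and $\Pi_{\theta(\phi)}(\SO(V))$ (summed over the two pure inner forms of $\SO(V)$ preserving the discriminant) are parametrized by characters of the component group $A_M^+$ under the natural embedding $A_M^+ \hookrightarrow A_{\theta(M)}^+$ induced by $M \mapsto M \oplus \mathbbm{1}$. Part (2) provides an injection from the former into the latter that respects these parametrizations, and the matching cardinalities force surjectivity. The principal obstacle lies in (2), specifically in pinning down the Whittaker-datum dependence: the particular choice $\Fw_{-c}$ with $c = \disc(V)$, the sign of the additive character $\psi$ appearing in the Weil representation, and the distinguished characters $\delta_{M,\pm 1}$ propagate through the Shimura--Waldspurger correspondence into delicate sign contributions in the epsilon-factor formula. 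Tracking these signs is essentially the content of the Adams--Barbasch--Paul analysis.
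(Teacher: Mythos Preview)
The paper does not prove this theorem at all: it states it and immediately remarks that it is essentially due to A.~Paul \cite{MR2175409}, with the translation into LLC language carried out in \cite[Theorem 6.11]{chen2024arthur} and \cite[Theorem 8.1]{kakuhama2024local}. So there is no ``paper's own proof'' to compare against; the result is imported as a black box.

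Your proposal, on the other hand, sketches an actual argument, and it has a real gap. In part~(2) you invoke Kudla's filtration to reduce from tempered $\pi$ to discrete series. Over $\mathbb{R}$ there is no Kudla filtration in the sense you need: the paper itself says so explicitly in \S5.1, remarking that ``in the Archimedean case, the lack of Kudla's filtration makes it difficult'' to control big theta lifts. What survives over $\mathbb{R}$ is an induction principle (used e.g.\ in the proof of Corollary~\ref{C:towers}), which relates $\Theta(\tau\rtimes\pi_0)$ to $\tau\rtimes\Theta(\pi_0)$ via a nonzero map, not an isomorphism; turning that into the clean identity $\theta(\tau\rtimes\pi_0)\cong\tau\rtimes\theta(\pi_0)$ for tempered representations requires exactly the sort of hard case-by-case analysis that Paul's paper carries out. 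So your reduction step does not go through as written.

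Two smaller points. In (1), the claim that ``twisting by $\det$ corresponds to transferring between the two even orthogonal forms'' and that conservation then rules out $\sigma\cong\sigma\otimes\det$ is a $p$-adic heuristic; over $\mathbb{R}$ the towers are indexed by signature and the dictionary is different, so this needs justification. In (3), your counting argument needs (2) to hold for \emph{every} $V$ with the given discriminant (both pure inner forms), which you have not arranged. The honest route here is the one the paper takes: cite Paul's explicit computations and the subsequent LLC translations.
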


\vskip 5pt

\begin{rmk}
This theorem is essentially proved by A. Paul in \cite{MR2175409}. But we should remind readers that Paul's paper is not written in the fashion of the LLC as we stated. For the translation of Paul's result, readers can consult to \cite[Theorem 6.11]{chen2024arthur} and \cite[Theorem 8.1]{kakuhama2024local}. 
\end{rmk}

\vskip 5pt

\subsubsection{\bf Stable range theta lifts}

Besides the (almost) equal rank case, there is another extreme case, namely the stable range case. In this section, we shall recall some results on the stable range theta lifts.
Let $r_V$ be the Witt index of $V$, and similarly $r_W$ the Witt index of $W$. The following result of Loke-Ma \cite[Theorem A]{MR3305312} is a key ingredient in the proof of our main result.

\begin{thm}\label{Loke-Ma}
~
\begin{enumerate}
\item Suppose that $r_V\geq \dim W$.
Let $\pi$ be an irreducible unitary (genuine) representation of $\Mp(W)$ or $\Sp(W)$, depending on $\dim V$. Then its big theta lift $\Theta_{V,W,\psi}(\pi)$ is irreducible.

\vskip 5pt

\item Similarly, suppose that $r_W\geq \dim V$.
Let $\sigma$ be an irreducible unitary representation of $\Or(V)$. Then its big theta lift $\Theta_{W,V,\psi}(\sigma)$ is irreducible.
\end{enumerate}
\end{thm}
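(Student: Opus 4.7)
The plan is to realize the big theta lift $\Theta_{V,W,\psi}(\pi)$ as a parabolically induced representation of $\Or(V)$ and then prove irreducibility by analyzing the inducing data. I sketch part (1); part (2) follows by a parallel argument after exchanging the roles of $V$ and $W$ and invoking the MVW-involution of \Cref{thm MVW}.

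First, I would exploit the stable range hypothesis $r_V \geq \dim W$ to fix a polarization $V = L \oplus V_0 \oplus L^*$, where $L, L^*$ are a dual pair of isotropic subspaces with $\dim L = \dim W$ and $V_0$ is a non-degenerate complement of dimension $\dim V - 2\dim W$. Let $P_L = M_L \cdot N_L$ denote the parabolic subgroup of $\Or(V)$ stabilizing $L$, with Levi $M_L \cong \GL(L) \times \Or(V_0)$ and unipotent radical $N_L$. Relative to this polarization the Weil representation admits a mixed model
\begin{equation*}
\omega_{V,W,\psi} \;\cong\; \mathcal{S}(L \otimes W) \,\widehat\otimes\, \omega_{V_0, W, \psi},
\end{equation*}
in which $\GL(L) \times \Sp(W)$ (or $\GL(L) \times \Mp(W)$) acts on the first factor via the standard $\GL$-type dual pair realization on $\mathcal{S}(L \otimes W)$, the smaller dual pair $(\Or(V_0), \Sp(W))$ acts on the second factor in the usual way, and $N_L$ acts by an explicit character-valued term built from $L \otimes W$.

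Second, I would compute the $N_L$-coinvariants of $\omega_{V,W,\psi}$ together with the $\Sp(W)$-coinvariants of $\omega_{V,W,\psi} \otimes \pi^\vee$. Combining the explicit form of the mixed model with Frobenius reciprocity in the form of \Cref{Frobenious}, this should identify $\Theta_{V,W,\psi}(\pi)$ with a normalized parabolic induction
\begin{equation*}
\Ind_{P_L}^{\Or(V)}\!\left(\,\left|{\det}_{\GL(L)}\right|^{s} \,\boxtimes\, \sigma_0\,\right),
\end{equation*}
up to a genuine character twist in the metaplectic case. Here $s$ is a positive real exponent produced by the modulus character $\delta_{P_L}^{1/2}$ and $\sigma_0$ is the big theta lift of $\pi$ to $\Or(V_0)$, to be interpreted as the trivial representation when $V_0$ is small enough that this lift degenerates.

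Third, it remains to prove that the induced representation above is irreducible. The stable range gap $r_V \geq \dim W$ forces $s$ to lie strictly inside the positive Weyl chamber, so the associated standard module is already irreducible. Combined with the nonvanishing of $\Theta_{V,W,\psi}(\pi)$ from Howe duality — which ensures that $\Theta_{V,W,\psi}(\pi)$ coincides with this standard module rather than merely its Langlands quotient — one would then obtain the irreducibility. The main obstacle is precisely this last step: ruling out every possible reducibility hyperplane of the standard intertwining operator. The robust approach, followed by Loke--Ma, is the $K$-type method: one computes the multiplicities of $K$-types both in $\Theta_{V,W,\psi}(\pi)$ (via the compact dual pair correspondence of Howe applied to the mixed model) and in the candidate Langlands quotient, and matches them directly. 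An alternative uses J.-S.~Li's preservation of unitarity under stable range theta lifting together with unitary irreducibility criteria for parabolic inductions from unitary data; this bypasses explicit $K$-type bookkeeping at the cost of invoking deeper unitarity results.
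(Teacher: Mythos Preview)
The paper does not prove this theorem at all: it is quoted verbatim as \cite[Theorem~A]{MR3305312} (Loke--Ma) and used as a black box. So there is nothing to compare your argument against in the paper itself.

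On the substance of your sketch, there is a circularity problem in Steps~2--3. From the mixed model and the induction principle one obtains only a nonzero map between $\Theta_{V,W,\psi}(\pi)$ and the parabolic induction $\Ind_{P_L}^{\Or(V)}\bigl(|\det|^{s}\boxtimes\sigma_0\bigr)$, not an identification; turning that map into an isomorphism is essentially equivalent to the irreducibility you are trying to prove. Your sentence ``the stable range gap forces $s$ to lie strictly inside the positive Weyl chamber, so the associated standard module is already irreducible'' is false as stated: a standard module with strictly dominant exponents has a unique irreducible Langlands quotient, but is not itself irreducible in general. So the logic ``$\Theta$ maps to a standard module, the standard module is irreducible, hence $\Theta$ equals it'' does not go through.

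You correctly identify at the end where the real work lies: Loke--Ma's argument is a $K$-type matching (comparing $K$-type multiplicities in $\Theta_{V,W,\psi}(\pi)$, computed via Howe's correspondence for the compact dual pair inside the mixed model, against those in the candidate irreducible quotient), and the alternative route via J.-S.~Li's unitarity preservation is also viable. But those are the proof, not a final verification step; the preceding reduction to ``irreducibility of a standard module'' is not a reduction at all.
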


\vskip 5pt

Combining this irreducibility result with the induction principle, we can describe the stable range theta lifts under some technical conditions.

\begin{cor}\label{C:towers}
~
\begin{enumerate}
\item Fix an orthogonal space $V$ over $F$ with $\disc(V)=1$. Let $\pi$ be an irreducible unitary (genuine) representation of $\Mp(W)$ or $\Sp(W)$ depending on the parity of $\dim V$. Suppose that 
\[
    s=\frac{\dim V - \dim W -1}{2} \geq 0,
\]
and $\theta_{V,W,\psi}(\pi)$ is non-zero. For any $k\in\mathbb{N}$, let $V_k= V\oplus \mathcal{H}^k$, where $\mathcal{H}$ is the (orthogonal) hyperbolic plane. Then for any $k\geq \dim W$ there is an surjection
\[
    |\cdot|^{k+s-\frac{1}{2}}\times \cdots \times |\cdot|^{s+\frac{1}{2}} \rtimes \theta_{V,W,\psi}(\pi) \twoheadrightarrow \Theta_{V_k,W,\psi}(\pi).
\]

\vskip 5pt

\item Similarly, let $\sigma$ be an irreducible unitary representation of $\Or(V)$ with $\disc(V)=1$. Suppose that 
\[
    s'=\frac{\dim W - \dim V +1}{2} \geq 0,
\]
and $\theta_{W,V,\psi}(\sigma)$ is non-zero. For any $k\in\mathbb{N}$, let $W_k= W\oplus \mathcal{H}'^k$, where $\mathcal{H}'$ is the (symplectic) hyperbolic plane. Then for any $k\geq \dim V$ there is an surjection
\[
    |\cdot|^{k+s'-\frac{1}{2}}\times \cdots \times |\cdot|^{s'+\frac{1}{2}} \rtimes \theta_{W,V,\psi}(\sigma) \twoheadrightarrow \Theta_{W_k,V,\psi}(\sigma).
\]
\end{enumerate}
\end{cor}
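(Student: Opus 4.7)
My plan is to combine Kudla's induction principle for the orthogonal tower with the stable range irreducibility established in Theorem~\ref{Loke-Ma}. I focus on part~(1); part~(2) is proved by an entirely parallel argument with the roles of $V$ and $W$ exchanged, using the symplectic tower $W_k = W \oplus \mathcal{H}'^{k}$.

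The main ingredient is a one-step version of Kudla's induction principle for theta correspondence: for any irreducible (genuine) Casselman--Wallach representation $\pi$ with $\theta_{V, W, \psi}(\pi) \neq 0$, there is a non-zero $\Or(V')$-equivariant surjection
\[
|\cdot|^{s+\frac{1}{2}} \rtimes \theta_{V, W, \psi}(\pi) \;\twoheadrightarrow\; \theta_{V', W, \psi}(\pi), \qquad V' := V \oplus \mathcal{H}.
\]
Here the target $\theta_{V', W, \psi}(\pi)$ is non-zero by the conservation property for the orthogonal tower. This surjection is produced via a Jacquet module computation for the Weil representation $\omega_{V', W, \psi}$ along the maximal parabolic of $\Or(V')$ stabilizing an isotropic line in the added hyperbolic plane $\mathcal{H}$: one filters the associated Schwartz space using Borel's Lemma (Proposition~\ref{P:BorelLemma}) and applies the Schwartz Frobenius reciprocity of Corollary~\ref{Frobenious cor}. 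The exponent $s + \tfrac{1}{2}$ is dictated by the modulus character of that parabolic together with the normalized induction convention~(\ref{normalized Schwartz induction}).

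Iterating this one-step principle $k$ times and invoking induction in stages for parabolic induction yields the chain of surjections
\[
|\cdot|^{k+s-\frac{1}{2}} \times \cdots \times |\cdot|^{s+\frac{1}{2}} \rtimes \theta_{V, W, \psi}(\pi) \;\twoheadrightarrow\; \theta_{V_k, W, \psi}(\pi).
\]
Since $k \geq \dim W$, Theorem~\ref{Loke-Ma} applies to the pair $(\Or(V_k), \Sp(W))$ (or its metaplectic analogue), so the big theta lift $\Theta_{V_k, W, \psi}(\pi)$ is already irreducible and therefore coincides with $\theta_{V_k, W, \psi}(\pi)$. Substituting this identification yields the surjection asserted in part~(1).

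The principal technical obstacle is the very first step: establishing Kudla's one-step induction principle over $\BR$ at the level of smooth Fréchet representations of moderate growth, with the precise exponent. The analogous computation over non-Archimedean fields is classical (see, e.g., the arguments in \cite{gan2016gross,atobe2018local}), but in the Archimedean setting one must work consistently with Schwartz sections and continuity, using the toolkit assembled in Section~\ref{sec theta}; in particular, the filtration of the Jacquet module of $\omega_{V', W, \psi}$ must be read off from Proposition~\ref{P:BorelLemma} and the identification of graded pieces with tensor products of smaller Weil representations against characters. The iteration and the final reduction using the stable range irreducibility are then essentially formal.
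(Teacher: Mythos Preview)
Your strategy differs from the paper's, and the step you yourself flag as the ``principal technical obstacle'' is where the argument is incomplete. You assert the one-step surjection
\[
|\cdot|^{s+\frac12}\rtimes\theta_{V,W,\psi}(\pi)\;\twoheadrightarrow\;\theta_{V',W,\psi}(\pi)
\]
with \emph{small} theta on both sides, but the Borel--lemma analysis you sketch on the Weil representation would, if carried out, naturally yield either the big-$\Theta$ version of this surjection or (dually) a non-zero map $\Theta_{V',W,\psi}(\pi)\to\Ind\bigl(|\det|^{-s-\frac12}\boxtimes\theta_{V,W,\psi}(\pi)\bigr)$. Passing from big to small theta at each intermediate step $j<k$ is not automatic: you are \emph{not} in the stable range there, so $\Theta_{V_j,W,\psi}(\pi)$ need not be irreducible, and nothing guarantees that its radical dies in the next small theta lift. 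Your iteration therefore gives a surjection from $|\cdot|^{\cdots}\rtimes\Theta_{V,W,\psi}(\pi)$, not from $|\cdot|^{\cdots}\rtimes\theta_{V,W,\psi}(\pi)$, and there is no hypothesis forcing $\Theta_{V,W,\psi}(\pi)=\theta_{V,W,\psi}(\pi)$.

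The paper avoids this entirely. It invokes the induction principle of Paul \cite[\S5.2]{MR2175409} once, directly from $V$ to $V_k$, to obtain a non-zero map
\[
\Theta_{V_k,W,\psi}(\pi)\;\longrightarrow\;\ind_{Q_k}^{\Or(V_k)}\bigl(|\det|^{-s-\frac{k}{2}}\boxtimes\theta_{V,W,\psi}(\pi)\bigr).
\]
Since $k\geq\dim W$, the pair is in the stable range and $\Theta_{V_k,W,\psi}(\pi)$ is irreducible by Theorem~\ref{Loke-Ma}, so this map is injective. The key maneuver you are missing is then to apply the MVW-involution (Theorem~\ref{thm MVW}) together with the contragredient functor: this \emph{reverses} the injection into the desired surjection from $\Ind_{Q_k}^{\Or(V_k)}\bigl(|\det|^{s+\frac{k}{2}}\boxtimes\theta_{V,W,\psi}(\pi)\bigr)$ onto $\Theta_{V_k,W,\psi}(\pi)$, and the iterated $\GL_1$-induction surjects onto the former. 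Irreducibility is thus used only at the single endpoint $V_k$, never at intermediate levels; MVW is precisely the device that makes this possible, and it does not appear in your proposal.
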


\vskip 5pt

\begin{proof}
We only prove (1) here, the proof of (2) is similar. By the induction principle \cite[\S 5.2]{MR2175409}, we know that there is a non-zero map
\[
    \Theta_{V_k,W,\psi}(\pi) \longrightarrow \ind_{Q_k}^{\Or(V_k)}\left(|\det|^{-s-\frac{k}{2}}\boxtimes \theta_{V,W,\psi}(\pi)\right),
\]
where $Q_k$ is the standard parabolic subgroup of $\Or(V_k)$ with Levi component $\GL_{k}\times \Or(V)$. Since $k\geq \dim W$, the dual-pair $\Mp(W)\times \Or(V_k)$ (or $\Sp(W)\times \Or(V_k)$) is in the stable range. It follows from Theorem \ref{Loke-Ma} that $\Theta_{V_k,W,\psi}(\pi)$ is irreducible. Therefore the above map is injective. Applying both the MVW-involution functor (see \Cref{thm MVW}) and the contragredient functor to the above map, we deduce that $\Theta_{V_k,W,\psi}(\pi)$ is a quotient of $\Ind_{Q_k}^{\Or(V_k)}\left(|\det|^{s+\frac{k}{2}}\boxtimes \theta_{W,V,\psi}(\sigma)\right)$, which is also a quotient of 
\[
    |\cdot|^{k+s-\frac{1}{2}}\times \cdots \times |\cdot|^{s+\frac{1}{2}} \rtimes \theta_{V,W,\psi}(\pi) 
\]
as desired.

\end{proof}

\vskip 5pt

\subsection{A vanishing result for Hom spaces}

In this subsection we record a vanishing result for Hom spaces between certain parabolic induced representations and Langlands quotients of standard modules; this will be used in \Cref{section: Mackey} and \Cref{Descent along the Witt tower}.
 
\begin{pro}\label{vanishing lemma}
Let $V$ be either an orthogonal or symplectic space, $G=G(V)$ the isometry group (or the metaplectic/ Jacobi group when $V$ is symplectic), and 
\[
    \pi = LQ\left(\tau_1|\det|^{a_1}\times\cdots\times\tau_r|\det|^{a_r}\rtimes\pi_0\right)
\]
be an irreducible representation of $G$. Here 
\[
    a_1 \geq \cdots \geq a_r >0
\]
are real numbers, $\tau_i$'s are either unitary characters of $\GL_1(\BR)$ or discrete series of $\GL_2(\BR)$, and $\pi_0$ is a tempered representation of $G_0=G(V_0)$ for some non-degenerate subspace $V_0\subset V$ of the proper dimension. Then, for any:
\begin{itemize}
    \item $\tau$ unitary character of $\GL_1$ or discrete series of $\GL_2$;
    
    \vskip 5pt

    \item real number $a>a_1$;
    
    \vskip 5pt

    \item $\sigma$ a representation (not necessarily of finite length) of $G_1=G(V_1)$ for some non-degenerate subspace $V_1\subset V$ of the proper dimension,
\end{itemize}
\vskip 5pt
we have
\[
\Hom_{G}\left(\tau|\det|^a\rtimes\sigma,\pi\right) = 0.
\]
\end{pro}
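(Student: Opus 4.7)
The plan is to deduce this vanishing from Frobenius reciprocity combined with the standard bound on the cuspidal exponents appearing in the Jacquet module of a Langlands quotient.

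First, I would reduce to the case where $G$ is orthogonal, symplectic, or metaplectic. When $G = \J(W)$ is a Jacobi group, by \Cref{Sun thm} and \Cref{F_k and G_k vs parabolic}, the Fourier--Jacobi functor $\mathcal{G}_\psi$ is an equivalence of categories $\Smod_\psi(\J(W))\simeq \Smod_{\gen}(\Mp(W))$ which intertwines normalized parabolic induction on both sides. Applying $\mathcal{G}_\psi$ to each argument of the $\Hom$ therefore matches the Jacobi-group problem term by term with the corresponding problem for $\Mp(W)$, with the same Langlands data and the same exponents $(a,a_1,\ldots,a_r)$. So we may assume $G$ is a classical or metaplectic group.

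Let $P\subset G$ (or its preimage in the metaplectic case) be the standard parabolic with Levi quotient $M_P = \GL_k \times G_1$, where $k=\dim\tau\in\{1,2\}$, so that $\tau|\det|^a \rtimes \sigma = \ind_P^G(\tau|\det|^a\boxtimes \sigma)$. Frobenius reciprocity in the Casselman--Wallach setting (a version of \Cref{Frobenious} combined with Casselman's second adjointness theorem) yields an isomorphism
\[
\Hom_G\left(\tau|\det|^a \rtimes \sigma,\, \pi\right) \;\cong\; \Hom_{M_P}\left(\tau|\det|^a \boxtimes \sigma,\, r_{\bar P}(\pi)\right),
\]
where $r_{\bar P}(\pi)$ denotes the normalized Jacquet module of $\pi$ along the opposite parabolic $\bar P$. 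It thus suffices to show that no irreducible constituent of $r_{\bar P}(\pi)$ has $\GL_k$-component isomorphic to $\tau|\det|^a$.

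The core step is to control the $\GL_k$-exponents of $r_{\bar P}(\pi)$. Since $\pi$ is the Langlands quotient of $\tau_1|\det|^{a_1}\times\cdots\times\tau_r|\det|^{a_r}\rtimes\pi_0$, Casselman's subrepresentation theorem embeds $\pi$ into an opposite standard module, and the Archimedean analog of the geometric lemma then expresses every irreducible constituent of $r_{\bar P}(\pi)$ as a ``shuffle'' of the inducing data $\{\tau_1|\det|^{a_1},\ldots,\tau_r|\det|^{a_r},\pi_0\}$. In particular, any such constituent whose $\GL_k$-part has the form $\tau'|\det|^{a'}$ with $\tau'$ essentially tempered must satisfy $a'\leq a_1$. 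Since $a>a_1$ by hypothesis, no constituent can have $\GL_k$-part isomorphic to $\tau|\det|^a$, so the Hom vanishes. This conclusion is insensitive to whether $\sigma$ has finite length, since the obstruction lives entirely on the $r_{\bar P}(\pi)$ side.

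The main technical obstacle I anticipate is justifying the sharp exponent bound $a'\leq a_1$ in the Archimedean setting, especially when $\tau$ or some $\tau_i$ is a discrete series of $\GL_2(\BR)$, where ``exponent'' refers to the real part of the central character: one must track these central character twists carefully through the geometric lemma applied to the opposite standard module. The strict inequality $a>a_1$ in the hypothesis is precisely what is required to exclude the extremal case $a'=a_1$.
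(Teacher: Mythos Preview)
Your reduction of the Jacobi case to the metaplectic case via $\mathcal{G}_\psi$ matches the paper exactly. For the classical and metaplectic cases the paper does not give an argument at all: it simply cites \cite[Lemma~4.1.4, Theorem~A.0.1]{chen2023multiplicity}. So your sketch is already more explicit than the paper's own treatment here, and the overall strategy you outline---convert via an adjunction to a statement about a Jacquet module of $\pi$, then bound its $\GL_k$-exponents by $a_1$ using the Langlands data---is the correct one and is essentially what the cited reference does.

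Two adjustments are needed for the Archimedean setting, though. First, the identity you invoke as ``Casselman's second adjointness'' is Bernstein's theorem over non-Archimedean fields; over $\BR$ there is no off-the-shelf statement of the form $\Hom_G(\Ind_P^G\rho,\pi)\cong\Hom_{M_P}(\rho,r_{\bar P}(\pi))$, and the difficulty is compounded here because $\sigma$ need not be of finite length (so dualizing to reach the easy Frobenius is not automatic either). The workable substitute is the Schwartz-homology Frobenius reciprocity (\Cref{Frobenious} in this paper): tensoring $\unind_P^G\bigl((\tau|\det|^a\boxtimes\sigma)\delta_P^{1/2}\bigr)$ with $\pi^\vee$ and taking $G$-coinvariants reduces the question to the $P$-coinvariants of $(\tau|\det|^a\boxtimes\sigma)\otimes\pi^\vee\big|_P$, and hence to the $N$-coinvariants $(\pi^\vee)_N$, which is a finite-length $M$-module whose $\GL_k$-exponents one can analyse. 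Second, the ``Archimedean analog of the geometric lemma'' is not a ready-made tool; the exponent bound on the Jacquet module of a Langlands quotient is supplied instead by the theory of leading exponents for Harish-Chandra modules (Casselman, Hecht--Schmid, Mili\v{c}i\'c). This is precisely the technical gap you flagged, and it is what \cite[Theorem~A.0.1]{chen2023multiplicity} provides.
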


\vskip 5pt

\begin{proof}
In the case that $G$ is a classical group or metaplectic group, this has been proved in \cite[Lemma 4.1.4]{chen2023multiplicity} and \cite[Theorem A.0.1]{chen2023multiplicity}. In the case that $G$ is a Jacobi group, this follows from the case of metaplectic group, combined with \Cref{Sun thm} and \Cref{F_k and G_k vs parabolic}.

\end{proof}

\vskip 10pt

\section{Uniqueness and epsilon dichotomy of the Fourier-Jacobi model}\label{section: stable 1}

We shall prove the uniqueness and epsilon dichotomy for the Fourier-Jacobi model in a Vogan L-packet in this section, and leave the proof of existence in the next section.

\subsection{Desideratum I: uniqueness and epsilon dichotomy}\label{Desideratum 1}
We retain the notation in \Cref{section FJ statment}. Our desideratum in this section is the following.

\begin{thm}\label{P:LtoR}
Let $\wt \phi: \WW_\BR\rightarrow \Sp(M)$ be a tempered L-parameter of $\Mp(W)$, and $\phi: \WW_\BR \rightarrow \SO(N)$ a tempered L-parameter of $\Sp(W)$, where $M$ is the symplectic representation of $\WW_\BR$ associated to $\wt\phi$, and $N$ is the orthogonal representation associated to $\phi$. If there exists some $\wt\pi\in\Pi_{\wt\phi}(\Mp(W))$ and $\pi\in\Pi_{\phi}(\Sp(W))$ such that 
\begin{equation}\label{assumption existense}
      \Hom_{\Mp(W)}\left(\wt \pi\boxtimes \pi, \omega_\psi\right)\neq 0,
\end{equation}
then we must have 
\[
\calJ_\psi(\wt \pi)\times \calJ_{\Fw}(\pi)=\eta_{N_1}\times \eta_M
\]
as in \Cref{GGP FJ}.
%\wt \pi=\pi\left(\phi',\eta'\right)$ and $\pi=\pi\left(\phi,\eta\right)$, where $\eta'\in\Irr(A_{\phi'})$ and $\eta\in\Irr(A_\phi)$ are distinguished characters defined as in (\ref{distigulished character}).
\end{thm}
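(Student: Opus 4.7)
The plan is to transport the non-vanishing hypothesis \Cref{assumption existense} to a tempered Bessel Hom space via a seesaw with stable-range theta lifts, and then invoke \Cref{GGP Bessel} together with Prasad's conjecture \Cref{T:AlmostEqualrk} to deduce the claimed character identity.

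First I would fix an odd-dimensional quadratic space $V$ with $\dim V = \dim W + 1$ and $\disc V = 1$, so that $\SO(V)$ is the Shimura--Waldspurger target of $\Mp(W)$, and an even-dimensional quadratic space $V'$ with $\dim V' = \dim W + 2$, so that $(\Sp(W), \SO(V'))$ falls into the setting of Prasad's conjecture. Put $\sigma_0 = \theta_{V,W,\psi}(\wt\pi)$, which is non-zero by the bijectivity of \Cref{Shimura}, and, after possibly climbing the Witt tower, pick $V'$ so that $\sigma'_0 = \theta_{V',W,\psi}(\pi)$ is non-zero; by \Cref{T:AlmostEqualrk}, $\sigma'_0$ carries the L-parameter $\theta(N) = N \oplus \mathbbm{1} = N_1$, and both $\calJ_{\Fw_{-\disc V'}}(\sigma'_0)\big|_{A_N^+}$ and $\calJ_{\Fw'}(\sigma_0)$ are pinned down in terms of $\calJ_{\Fw}(\pi)$ and $\calJ_\psi(\wt\pi)$ via \Cref{LLC for metaplectic character}. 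To gain irreducibility I then enlarge into the stable range, $V_k = V \oplus \mathcal{H}^k$ and $V'_k = V' \oplus \mathcal{H}^k$ for $k \gg 0$: by \Cref{Loke-Ma} the big theta lifts $\Theta_{V_k,W,\psi}(\wt\pi)$ and $\Theta_{V'_k,W,\psi}(\pi)$ are irreducible, and by \Cref{C:towers} they are realized as Langlands quotients of explicit standard modules induced from $\sigma_0, \sigma'_0$. Using the Fourier--Jacobi functors \Cref{Functor F}--\Cref{Functor G}, their compatibility with parabolic induction (\Cref{F_k and G_k vs parabolic}), and the rewriting \Cref{fj MODEL rewrite}, a seesaw between the dual pairs $(\Mp(W) \times \Sp(W),\, \Or(V_k) \times \Or(V'_k))$ inside an ambient dual pair on $(\Mp(W \oplus W),\, \Or(V_k \oplus V'_k))$ transports \Cref{assumption existense} into the non-vanishing of a Bessel Hom space
\[
    \Hom_{\SO(V_k)}\bigl( \Theta_{V'_k,W,\psi}(\pi) \boxtimes \Theta_{V_k,W,\psi}(\wt\pi),\ \mathbbm{1}\bigr) \neq 0,
\]
which is a \emph{non-tempered} instance of the Bessel Gan--Gross--Prasad problem.

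The heart of the argument --- and what I expect to be the main obstacle --- is a non-tempered reduction showing that the above non-vanishing between non-tempered Langlands quotients forces the tempered Bessel non-vanishing $\Hom_{\SO(V)}(\sigma'_0 \boxtimes \sigma_0,\ \mathbbm{1}) \neq 0$. This is an archimedean analogue of \cite[\S 1.4]{moeglin2012conjecture}: one applies the Mackey-type filtration \Cref{P:BorelLemma} together with the Schwartz Frobenius reciprocity \Cref{Frobenious cor} to the restriction of the standard modules of \Cref{C:towers} down to the Bessel subgroup, and uses the vanishing criterion \Cref{vanishing lemma} to kill every stratum whose Langlands exponents fail to match the Langlands-quotient target, isolating the tempered contribution on $(\SO(V), \SO(V'))$. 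The archimedean subtleties --- Schwartz sections, convergence of the inverse-limit filtration, and continuous Frobenius reciprocity for Casselman--Wallach representations --- are what make this step technically substantial.

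Once the tempered Bessel non-vanishing is established, \Cref{GGP Bessel} yields $\calJ_{\Fw_c}(\sigma'_0) \times \calJ_{\Fw'}(\sigma_0) = \eta_M \times \eta_{N_1}$, where $c = \disc V$ and the even- and odd-dimensional orthogonal L-parameters of the Bessel pair are $N_1$ and $M$ respectively. Combining with $\calJ_\psi(\wt\pi) = \calJ_{\Fw'}(\sigma_0)$ from \Cref{LLC for metaplectic character} and the Prasad identification $\calJ_{\Fw_{-\disc V'}}(\sigma'_0)\big|_{A_N^+} = \calJ_{\Fw}(\pi)$ from \Cref{T:AlmostEqualrk} --- any Whittaker-datum discrepancy being absorbed through \Cref{change of Whittater data} --- gives
\[
    \calJ_\psi(\wt\pi) \times \calJ_{\Fw}(\pi) = \eta_{N_1} \times \eta_M
\]
as required by \Cref{GGP FJ}; the match between the Bessel root-number characters \Cref{distigulished character Bessel} and the Fourier--Jacobi ones \Cref{distigulished character} reduces to an explicit local root-number comparison in the spirit of \cite[Lemma 3.2]{gan2016gross}.
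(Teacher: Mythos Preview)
Your strategy matches the paper's—seesaw to a Bessel model, pass to stable range for irreducibility, peel off the non-tempered part via \Cref{P:BorelLemma} and \Cref{vanishing lemma}, then apply \Cref{GGP Bessel} and track characters through \Cref{T:AlmostEqualrk} and Shimura–Waldspurger—but the execution has two gaps. First, the seesaw you describe is not the right one: one does not double both $W$ and the orthogonal space. The correct diagram has a single $W$; on the orthogonal side one uses the codimension-one inclusion of the odd space in the even one (your $V\subset V'$, complemented by a line $L$ with $\disc L=-1$), and on the symplectic side the diagonal $\Sp(W)\hookrightarrow\Mp(W)\times_{\{\pm1\}}\Mp(W)$, the two metaplectic covers arising because $V$ and $L$ are both odd-dimensional. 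After rewriting the hypothesis as $\Hom_{\Sp(W)}(\wt\pi\boxtimes\overline\omega_\psi,\pi^\vee)\neq0$, this seesaw lifts $\pi^\vee$ (not $\pi$) to the even side—hence the contragredient twist via \eqref{dual pi} and \eqref{change of Whittater data} that the paper has to unwind in the final character computation.

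The more serious gap is that you cannot assume the almost-equal-rank even-orthogonal lift $\sigma'_0=\theta_{V',W,\psi}(\pi)$ is non-zero, and ``climbing the Witt tower'' destroys the argument: \Cref{T:AlmostEqualrk} applies only at $\dim V'=\dim W+2$, and higher up the lift is non-tempered with a different parameter, so neither the Prasad identification nor \Cref{C:towers} is available to describe the stable-range lift as a Langlands quotient built from $\sigma'_0$. Without that description the peeling step has no tempered endpoint to reach. The paper resolves this by running the seesaw \emph{twice}: first at the almost-equal-rank level (diagram $\natural.0$), which already yields $\Hom_{\Or(V')}(\Theta_{V,W,\psi}(\pi^\vee),\sigma')\neq0$ (paper's notation) and hence forces the non-vanishing of the small theta lift $\sigma:=\theta_{V,W,\psi}(\pi^\vee)$; only then does it pass to stable range (diagram $\natural.1$), invoke \Cref{C:towers} to identify both lifts as explicit Langlands quotients, and carry out the peeling of \Cref{L:ind1} down to the tempered Bessel model $\Hom_{\Or(V')}(\sigma,\sigma')\neq0$.
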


\vskip 5pt

So now suppose that 
\[
\Hom_{\Mp(W)}\left(\wt \pi\boxtimes \pi, \omega_\psi\right)\cong 
\Hom_{\Mp(W)}\left(\left(\wt \pi\boxtimes \pi\right)\otimes \overline{\omega}_{\psi}, \BC\right) \neq 0.
\]
This assumption is equivalent to
\begin{equation*}
\Hom_{\Sp(W)}\left(\wt \pi \boxtimes \overline{\omega}_{\psi}, \pi^\vee\right) \neq 0.   
\end{equation*}
Here $\Sp(W)\hookrightarrow \Mp(W)\times_{\{\pm1\}} \Mp(W)$ is the diagonal embedding. By \eqref{Shimura}, there exists a unique $2n+1$-dimensional orthogonal space $V'$ with $\disc(V')=1$, such that $\wt \pi\cong \theta_{W,V',\psi}(\sigma')$ for an irreducible tempered representation $\sigma'$ of $\Or(V')$. Let $L$ be an $1$-dimensional quadratic space with $\disc(L)=-1$, $e\in L$ a generator of $L$, and 
\begin{equation}\label{VV'}
  V = V'\oplus L.  
\end{equation} 
We have $\disc(V)=1$ by \eqref{disc V odd} and \eqref{disc V even}. 
Consider the seesaw diagram:
\begin{equation*}\label{SeeSaw-0}\tag{$\natural.0$}
\begindc{\commdiag}[8]
\obj(-60,40)[a]{$\Mp(W)\times_{\{\pm1\}} \Mp(W)$}
\obj(-54,36)[aa]{$~$}
\obj(-60,-40)[b]{$\Sp(W)$}
\obj(-54,-36)[bb]{$~$}
\obj(60,40)[c]{$\Or(V)$}
\obj(54,36)[cc]{$~$}
\obj(60,-40)[d]{$\Or(V')\times \Or(L)$}
\obj(54,-36)[dd]{$~$}
\mor{a}{b}{}[\atleft,\solidline]
\mor{aa}{dd}{}[\atleft,\solidline]
\mor{c}{d}{}[\atleft,\solidline]
\mor{cc}{bb}{}[\atleft,\solidline]
\enddc
\qquad
\begindc{\commdiag}[8]
\obj(-60,40)[a]{$\Theta_{W,V'}(\sigma')\boxtimes {\omega}_{W,\psi_{-1}}$}
\obj(-54,36)[aa]{$~$}
\obj(-60,-40)[b]{$\pi^\vee$}
\obj(-54,-36)[bb]{$~$}
\obj(60,40)[c]{$\Theta_{V,W}(\pi^\vee)$}
\obj(54,36)[cc]{$~$}
\obj(60,-40)[d]{$\sigma'$}\obj(57,-38)[dd]{$~$}
\mor{a}{b}{}[\atleft,\solidline]
\mor{aa}{dd}{}[\atleft,\solidline]
\mor{c}{d}{}[\atleft,\solidline]
\mor{cc}{bb}{}[\atleft,\solidline]
\enddc
\end{equation*}
Then the associated seesaw identity reads:
\begin{equation*}\label{E:SS0}\tag{$\maltese.0$}
    \Hom_{\Sp(W)}\left(\Theta_{W,V',\psi}(\sigma')\boxtimes {\omega}_{\bar\psi}, \pi^\vee\right) = \Hom_{\Or(V')}\left(\Theta_{V,W,\psi}(\pi^\vee),\sigma'\right).
\end{equation*}
Note that $\omega_{\bar\psi}\simeq \overline{\omega}_{\psi}$, and $\wt\pi=\theta_{W,V',\psi}(\sigma')$ is a quotient of $\Theta_{W,V',\psi}(\sigma')$. We deduce from our assumption that 
\begin{equation}\label{E:SS01}
     \Hom_{\Or(V')}\left(\Theta_{V,W,\psi}(\pi^\vee),\sigma'\right) \neq 0,  
\end{equation}
and in particular $\sigma:=\theta_{V,W,\psi}(\pi^\vee)$ is non-zero. The key point of the proof of Theorem \ref{P:LtoR} is to show the following.

\begin{pro}\label{P:tempBessel}
In the context of above, we have
\[
\Hom_{\Or(V')}\left(\sigma,\sigma'\right)\neq 0.
\]
\end{pro}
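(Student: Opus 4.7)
The plan is to upgrade the non-vanishing of $\Hom_{\Or(V')}(\Theta_{V,W,\psi}(\pi^\vee), \sigma')$ supplied by \eqref{E:SS01} to the non-vanishing of $\Hom_{\Or(V')}(\sigma, \sigma')$, where $\sigma := \theta_{V,W,\psi}(\pi^\vee)$ is the small theta lift. The key idea is to control the kernel of the quotient map $\Theta_{V,W,\psi}(\pi^\vee) \twoheadrightarrow \sigma$ and show that it cannot contribute to the Hom space into the tempered target $\sigma'$.

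First, I would verify that $\sigma$ is non-zero and tempered. The non-vanishing \eqref{E:SS01} forces $\Theta_{V,W,\psi}(\pi^\vee) \neq 0$, and hence $\sigma \neq 0$ by Howe duality. Since $\dim V = \dim W + 2$ places us in the almost-equal-rank regime and $\pi$ is tempered, Prasad's conjecture (Theorem \ref{T:AlmostEqualrk}) yields that $\sigma|_{\SO(V)}$ is irreducible and tempered, with L-parameter corresponding to $N \oplus \mathbbm{1}$. Setting $K := \ker\bigl(\Theta_{V,W,\psi}(\pi^\vee) \twoheadrightarrow \sigma\bigr)$ and applying $\Hom_{\Or(V')}(-, \sigma')$ to the short exact sequence $0 \to K \to \Theta_{V,W,\psi}(\pi^\vee) \to \sigma \to 0$ produces
\[
0 \to \Hom_{\Or(V')}(\sigma, \sigma') \to \Hom_{\Or(V')}(\Theta_{V,W,\psi}(\pi^\vee), \sigma') \to \Hom_{\Or(V')}(K, \sigma').
\]
The middle term is non-zero by \eqref{E:SS01}, so it suffices to prove $\Hom_{\Or(V')}(K, \sigma') = 0$.

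To establish this vanishing, my plan is to argue that every irreducible subquotient $\tau$ of $K$ is non-tempered and realizable as a Langlands quotient of a standard $\Or(V)$-module of the form
\[
\tau_1|\det|^{a_1} \times \cdots \times \tau_r|\det|^{a_r} \rtimes \pi_0, \qquad a_1 \geq \cdots \geq a_r > 0,
\]
with $\pi_0$ tempered on a smaller orthogonal group. This structural description would be obtained by comparing $\Theta_{V,W,\psi}(\pi^\vee)$ with the stable-range lifts $\Theta_{V_k,W,\psi}(\pi^\vee)$ along the Witt tower, which are irreducible by Loke--Ma (Theorem \ref{Loke-Ma}) and realized as quotients of explicit standard modules built from $\sigma$ via Kudla's induction principle (Corollary \ref{C:towers}). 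For each such non-tempered constituent $\tau$, I would restrict the corresponding standard $\Or(V)$-module to $\Or(V')$ and analyse the $\Or(V')$-orbit decomposition of the relevant flag variety of $\Or(V)$; together with Borel's lemma (Proposition \ref{P:BorelLemma}) this produces a filtration whose graded pieces are Schwartz parabolic inductions of $\Or(V')$ in which the positive exponents $a_i$ are preserved. The vanishing Lemma \ref{vanishing lemma}, applied with the tempered target $\sigma'$, then kills the Hom on each graded piece, hence on each $\tau$, and hence on $K$.

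The main technical obstacle lies in this last step: one has to verify both that \emph{every} irreducible subquotient of $K$ (not merely its unique irreducible quotient $\sigma$) is non-tempered of the prescribed Langlands form, and that the restriction of the associated standard modules to $\Or(V')$ genuinely admits a Borel-lemma filtration preserving the positive exponents so that the vanishing lemma applies. Once this is secured, combining with the left exact sequence gives $\Hom_{\Or(V')}(\sigma, \sigma') \cong \Hom_{\Or(V')}(\Theta_{V,W,\psi}(\pi^\vee), \sigma') \neq 0$. Finally, the Bessel GGP Theorem \ref{GGP Bessel} applied to $(\sigma, \sigma')$ converts this non-vanishing into a character identity on component groups, which, translated through Prasad's conjecture (Theorem \ref{T:AlmostEqualrk}) and the Shimura--Waldspurger correspondence \eqref{LLC for metaplectic character}, is exactly the identity $\calJ_\psi(\wt\pi) \times \calJ_{\Fw}(\pi) = \eta_{N_1} \times \eta_M$ required for Theorem \ref{P:LtoR}.
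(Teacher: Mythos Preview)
Your approach has a genuine gap at the step where you claim to control the kernel $K = \ker\bigl(\Theta_{V,W,\psi}(\pi^\vee) \twoheadrightarrow \sigma\bigr)$. You assert that ``every irreducible subquotient $\tau$ of $K$ is non-tempered and realizable as a Langlands quotient'' of a specific standard module, and that this ``would be obtained by comparing $\Theta_{V,W,\psi}(\pi^\vee)$ with the stable-range lifts $\Theta_{V_k,W,\psi}(\pi^\vee)$''. But no such comparison mechanism exists. The stable-range lifts live on the larger groups $\Or(V_k)$ with $k \geq \dim W$, and their irreducibility (Theorem~\ref{Loke-Ma}) together with the induction principle (Corollary~\ref{C:towers}) tells you only that $\Theta_{V_k,W,\psi}(\pi^\vee)$ is a quotient of an induced representation built from the \emph{small} lift $\sigma = \theta_{V,W,\psi}(\pi^\vee)$. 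This says nothing about the internal structure of the \emph{big} lift $\Theta_{V,W,\psi}(\pi^\vee)$ at the almost-equal-rank level, and in particular nothing about the constituents of $K$. As the paper itself remarks just after stating the proposition, over $\BR$ one lacks Kudla's filtration, so there is no known way to access $\Theta_{V,W,\psi}(\pi^\vee)$ or $K$ directly; this is exactly the obstruction your plan does not overcome.

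The paper's proof avoids $K$ altogether. It first passes to the stable range via a second seesaw (\ref{SeeSaw-1}), obtaining the non-tempered Bessel model $\Hom_{\Or(V'_t)}(\sigma_t,\sigma'_t)\neq 0$ for large $t$, where $\sigma_t$ and $\sigma'_t$ are the explicit Langlands quotients \eqref{sigmak}--\eqref{sigma'k}. Then Lemma~\ref{L:ind1} ``peels off'' one hyperbolic plane at a time: from $\Hom_{\Or(V'_{k+1})}(\sigma_{k+1},\sigma'_{k+1})\neq 0$ one deduces $\Hom_{\Or(V'_k)}(\sigma_k,\sigma'_k)\neq 0$, by orbit analysis on a flag variety and the vanishing Proposition~\ref{vanishing lemma}. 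Iterating down to $k=0$ gives the proposition. The orbit analysis and vanishing lemma you describe do appear, but at this descent step---not in an analysis of $K$. Finally, your last paragraph about deducing the character identity belongs to the proof of Theorem~\ref{P:LtoR}, not to Proposition~\ref{P:tempBessel} itself.
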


\vskip 5pt

Note that if one can show that $\Theta_{V,W,\psi}(\pi^\vee)$ is irreducible, then this proposition would simply follow from \eqref{E:SS01}. When the base field is non-Archimedean, one can show the irreducibility of $\Theta_{V,W,\psi}(\pi^\vee)$ using Kudla's filtration (see \cite[Proposition C.4]{MR3166215}). However in the Archimedean case, the lack of Kudla's filtration makes it difficult to show the irreducibility of $\Theta_{V,W,\psi}(\pi^\vee)$ directly. 

\vskip 5pt

Assuming this proposition, we now end up this subsection with the proof of Theorem \ref{P:LtoR}. The proof of Proposition 
\ref{P:tempBessel} will be left to the rest subsections.

\begin{proof}[Proof of Theorem~\ref{P:LtoR}]
Let $\theta(\wt\phi)$ and $\theta(\phi)$ be the tempered L-parameters of $\SO(V')$ and $\SO(V)$ with associated orthogonal representations $M$ and $\theta(N) = N \oplus \mathbbm{1}$ of $\WW_{\BR}$, respectively.
By using the construction of the LLC for metaplectic groups (see \eqref{LLC for metaplectic} and \eqref{LLC for metaplectic character}), we know that the restriction of $\sigma'$ to $\SO(V')$ is irreducible and lies in the L-packet $\Pi_{\theta(\wt\phi)}(\SO(V'))$, and satisfies
\begin{equation}\label{distin charace SW}
    \calJ_{\mathfrak{w}'}\left(\sigma'\right)= \calJ_{\psi}\left(\wt\pi\right).
\end{equation}
Similarly, applying \Cref{T:AlmostEqualrk}, we know that the restriction of $\sigma$ to $\SO(V)$ is irreducible and lies in the L-packet $\Pi_{\theta(\phi)}(\SO(V))$, and satisfies
\begin{equation}\label{distin charace}
    \calJ_{\mathfrak w_{-1}}\left(\sigma\right)\big|_{A^+_N} = \calJ_{\mathfrak w}\left(\pi^\vee\right).
\end{equation}
Indeed, one can express $\calJ_{\mathfrak w}\left(\pi\right)$ in terms of $\calJ_{\mathfrak w_{1}}\left(\sigma\right)$:
\begin{equation}\label{distin charace 2}
\begin{aligned}
  \calJ_{\mathfrak w}\left(\pi\right)
    &= \calJ_{\mathfrak w}(\pi^\vee)\otimes \delta_{N,-1}
      &&\text{by \eqref{dual pi}} \\
    &= \calJ_{\mathfrak w_{-1}}\left(\sigma\right)\big|_{A^+_N}
       \otimes \delta_{N,-1}
      &&\text{by \eqref{distin charace}} \\
    &= \left(\calJ_{\mathfrak w_{1}}\left(\sigma\right)
       \otimes \delta_{N_1,-1}\right)\big|_{A^+_N}
       \otimes \delta_{N,-1}
      &&\text{by \eqref{change of Whittater data}} \\
    &= \calJ_{\mathfrak w_{1}}\left(\sigma\right)\big|_{A^+_N},
\end{aligned}
\end{equation}
where in the last step we have make use of the obvious equality $\delta_{N_1,-1}\big|_{A_N^+} = \delta_{N,-1}$.
On the other hand, by \Cref{P:tempBessel} we have
\[
  \Hom_{\SO(V')}\left(\sigma,\sigma'\right)\neq 0.
\]
Hence, by \Cref{GGP Bessel}, 
\begin{equation}\label{GGP Bessel epsilon}
\begin{cases}
  \calJ_{\mathfrak w_{1}}\left(\sigma\right)(a)
    = \epsilon\left(N_1^a\otimes M,\psi\right)
       \cdot \det\left(N_1^a\right)(-1)^{\frac{\dim M}{2}}
       \cdot \det\left(M\right)(-1)^{\frac{\dim N_1^a}{2}},
       & \mbox{for }a\in A_{N_1}^+;\\[5pt]
  \calJ_{\mathfrak w}\left(\sigma'\right)(b)
    = \epsilon\left(N_1\otimes M^b,\psi\right)
       \cdot \det\left(M^b\right)(-1)^{\frac{\dim N_1}{2}}
       \cdot \det\left(N_1\right)(-1)^{\frac{\dim M^b}{2}},
       & \mbox{for }b\in A_{M}.
\end{cases}
\end{equation}
Combining \eqref{distin charace SW}, \eqref{distin charace 2} and \eqref{GGP Bessel epsilon}, we obtain the desired equality in Theorem \ref{P:LtoR}. This completes the proof.

\end{proof}

\vskip 5pt

\subsection{Stable range seesaw I: some non-tempered Bessel models}
To prove Proposition \ref{P:tempBessel}, we shall make use of some stable range seesaw diagram. For any $k\in\mathbb{N}$, set
\[
    V_k^{\dagger}= V^{\dagger} + \mathcal{H}^k,
\]
where $\mathcal{H}$ is the (orthogonal) hyperbolic plane, and $\dagger \in\{\varnothing,\prime\}$. Note that both $\sigma$ and $\sigma'$ are tempered, we consider the Langlands quotient of the standard modules
\begin{equation}\label{sigmak}
       \sigma_k = LQ\left(|\cdot|^k\times\cdots\times|\cdot|^1\rtimes\sigma\right),
\end{equation}
and also
\begin{equation}\label{sigma'k}
        \sigma'_k = LQ\left(|\cdot|^{k-\frac{1}{2}}\times\cdots\times|\cdot|^{\frac{1}{2}}\rtimes\sigma'\right).
\end{equation}
%Here ``$LQ$'' means the Langlands quotient. 
According to Corollary \ref{C:towers}, we know that if $k\geq\dim W$, then $\sigma_k \cong \Theta_{V_k,W,\psi}\left(\pi^\vee\right)$ and $\sigma'_k\cong \Theta_{V'_k,W,\psi}\left(\wt \pi\right)$. Thus we fix a positive integer $t\geq\dim W$. Consider the following seesaw diagram:
\begin{equation*}\label{SeeSaw-1}\tag{$\natural.1$}
\begindc{\commdiag}[8]
\obj(-60,40)[a]{$\Mp(W)\times_{\{\pm1\}} \Mp(W)$}
\obj(-54,36)[aa]{$~$}
\obj(-60,-40)[b]{$\Sp(W)$}
\obj(-54,-36)[bb]{$~$}
\obj(60,40)[c]{$\Or(V_t)$}
\obj(54,36)[cc]{$~$}
\obj(60,-40)[d]{$\Or(V'_t)\times \Or(L)$}
\obj(54,-36)[dd]{$~$}
\mor{a}{b}{}[\atleft,\solidline]
\mor{aa}{dd}{}[\atleft,\solidline]
\mor{c}{d}{}[\atleft,\solidline]
\mor{cc}{bb}{}[\atleft,\solidline]
\enddc
\qquad 
\begindc{\commdiag}[8]
\obj(-60,40)[a]{$\Theta_{W,V'_t,\psi}(\sigma'_t)\boxtimes {\omega}_{\bar\psi}$}
\obj(-54,36)[aa]{$~$}
\obj(-60,-40)[b]{$\pi^\vee$}
\obj(-54,-36)[bb]{$~$}
\obj(60,40)[c]{$\Theta_{V_t,W,\psi}(\pi^\vee)$}
\obj(54,36)[cc]{$~$}
\obj(60,-40)[d]{$\sigma'_t$}
\obj(57,-38)[dd]{$~$}
\mor{a}{b}{}[\atleft,\solidline]
\mor{aa}{dd}{}[\atleft,\solidline]
\mor{c}{d}{}[\atleft,\solidline]
\mor{cc}{bb}{}[\atleft,\solidline]
\enddc
\end{equation*}
Then the associated seesaw identity reads:
\begin{equation}\label{E:SS9}\tag{$\maltese.1$}
    \Hom_{\Sp(W)}\left(\Theta_{W,V'_t,\psi}\left(\sigma'_t\right)\boxtimes {\omega}_{\bar\psi}, \pi^\vee\right) = \Hom_{\Or(V'_t)}\left(\Theta_{V_t,W,\psi}\left(\pi^\vee\right),\sigma'_t\right).
\end{equation}
Since $\wt\pi\cong\theta_{W,V'_t,\psi}\left(\sigma'_t\right)$ is a quotient of $\Theta_{W,V'_t,\psi}\left(\sigma'_t\right)$, and $\sigma_t\cong\Theta_{V_t,W,\psi}\left(\pi^\vee\right)$, the \Cref{E:SS9} and our assumption \eqref{assumption existense} imply that
\begin{equation}\label{nontempered Bessel}
     \Hom_{\Or(V'_t)}\left(\sigma_t,\sigma'_t\right) \neq 0.
\end{equation}
This Hom space is a non-tempered Bessel model. In the next section, we shall ``peel off'' the non-tempered part of \eqref{nontempered Bessel} to deduce that $\Hom_{\Or(V')}\left(\sigma,\sigma'\right) \neq 0$. 
%In \Cref{Descent along the Witt tower}, we shall use an analog of Lemma \ref{L:ind1} to ``peel off'' the non-tempered part of the above Hom-space.

\vskip 5pt

\subsection{Peeling off the non-tempered part I: Bessel model}\label{section: Mackey}

This section is devoted to proving the following lemma.

\begin{lem}\label{L:ind1}
In the context of the previous subsection, suppose that
\begin{equation}\label{assumption Bessel}
  \Hom_{\Or(V'_{k+1})}\left(\sigma_{k+1},\sigma'_{k+1}\right) \neq 0
\end{equation}
for some non-negative integer $k$. Then
\[
  \Hom_{\Or(V'_k)}\left(\sigma_k,\sigma'_k\right) \neq 0.
\]
\end{lem}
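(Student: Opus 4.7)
The plan is to carry out a Mackey-type orbit analysis for the restriction of certain parabolically induced representations and to kill all but one orbit contribution by means of the vanishing lemma (Proposition~\ref{vanishing lemma}); this is the Archimedean analogue of Mœglin--Waldspurger's non-tempered reduction in \cite[\S 1.4]{moeglin2012conjecture}.

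By transitivity of parabolic induction, $\sigma_{k+1}$ is the Langlands quotient of $|\cdot|^{k+1}\rtimes\sigma_k$, so the hypothesis yields an injection
\[
\Hom_{\Or(V'_{k+1})}(\sigma_{k+1},\sigma'_{k+1})\hookrightarrow\Hom_{\Or(V'_{k+1})}\bigl(|\cdot|^{k+1}\rtimes\sigma_k,\sigma'_{k+1}\bigr),
\]
and the right-hand side is nonzero. I would then apply Borel's lemma (Proposition~\ref{P:BorelLemma}) to the restriction of $|\cdot|^{k+1}\rtimes\sigma_k$ from $\Or(V_{k+1})$ to $\Or(V'_{k+1})$, using the decomposition $V_{k+1}=V'_{k+1}\oplus L$. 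The $\Or(V'_{k+1})$-orbits on isotropic lines in $V_{k+1}$ fall into two types---closed orbits (lines inside $V'_{k+1}$) and an open orbit (lines transverse to $V'_{k+1}$)---producing a filtration whose graded pieces are indexed by these orbits together with their conormal-bundle jets. After accounting for the modulus characters, the closed-orbit graded pieces (and all of their higher conormal jets) become parabolic inductions in $\Or(V'_{k+1})$ with leading exponent at least $|\cdot|^{k+3/2}$, strictly larger than the top exponent $k+1/2$ appearing in the Langlands data of $\sigma'_{k+1}$; Proposition~\ref{vanishing lemma} therefore kills each such contribution.

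The nonzero Hom must therefore come from the open orbit. A direct computation shows that the stabilizer in $\Or(V'_{k+1})$ of a generic representative $\ell=\langle v+e\rangle$ (with $e\in L$ a generator of norm $q(e)=-1$ and $v\in V'_{k+1}$ of norm $1$) is precisely $\Or(V_k)$, realized as the pointwise stabilizer of $v$ via the isomorphism of quadratic spaces $v^\perp\cong V_k$ inside $V'_{k+1}$. Frobenius reciprocity (Corollary~\ref{Frobenious cor}) then yields
\[
\Hom_{\Or(V_k)}\bigl(\sigma_k,\sigma'_{k+1}\big|_{\Or(V_k)}\bigr)\neq 0.
\]

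To peel off the remaining level I would iterate. Using that $\sigma'_{k+1}$ is the unique irreducible subrepresentation of $|\cdot|^{-(k+1/2)}\rtimes\sigma'_k$ (the dual form of Langlands classification, valid because every irreducible representation of $\Or(V'_k)$ is self-dual by Theorem~\ref{thm MVW}), I obtain a nonzero Hom from $\sigma_k$ into $\bigl(|\cdot|^{-(k+1/2)}\rtimes\sigma'_k\bigr)\big|_{\Or(V_k)}$ and then apply Borel's lemma again, this time to the $\Or(V_k)$-orbits on isotropic lines in $V'_{k+1}$ under the decompositions $V'_{k+1}=V_k\oplus\langle v\rangle$ and $V_k=V'_k\oplus L$. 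The open orbit is represented by $\ell=\langle e+v\rangle$, whose stabilizer in $\Or(V_k)$ is $\Or(V'_k)$---the pointwise stabilizer of $e$, via $e^\perp\cap V_k=V'_k$---so Frobenius reciprocity identifies its contribution with $\Hom_{\Or(V'_k)}(\sigma_k,\sigma'_k)$, giving the desired non-vanishing. For the closed orbits, one passes to the MVW dual (Theorem~\ref{thm MVW}) so that the leading exponent becomes positive; the higher conormal jets then produce strictly larger exponents that are killed by Proposition~\ref{vanishing lemma}, while the extremal $j=0$ term must be ruled out by a more refined analysis using the precise Langlands data of $\sigma_k^\vee$. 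The main obstacle will be precisely this delicate extremal case in the second Mackey step, where the naive exponent shift lands exactly at the boundary of Proposition~\ref{vanishing lemma}.
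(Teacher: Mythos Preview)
Your first Mackey step is correct and matches the paper exactly: the surjection $|\cdot|^{k+1}\rtimes\sigma_k\twoheadrightarrow\sigma_{k+1}$, the orbit decomposition on isotropic lines, the exponent shift $k+1\mapsto k+\tfrac32$ on the closed orbit killing all jets via Proposition~\ref{vanishing lemma}, and the open-orbit stabilizer $\Or(V_k)$.

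The difficulty lies in how you set up the second step. Corollary~\ref{Frobenious cor} does \emph{not} give $\Hom_{\Or(V_k)}(\sigma_k,\sigma'_{k+1})$; it gives
\[
\Hom_{\Or(V'_{k+1})}\bigl(\unind_{\Or(V_k)}^{\Or(V'_{k+1})}\sigma_k,\sigma'_{k+1}\bigr)\cong\Hom_{\Or(V_k)}\bigl((\sigma'_{k+1})^\vee,\sigma_k^\vee\bigr)\cong\Hom_{\Or(V_k)}\bigl(\sigma'_{k+1},\sigma_k\bigr),
\]
with $\sigma'_{k+1}$ as the \emph{source}. This is the same space as yours by self-duality, but writing it this way is the whole point: one now has a nonzero Hom \emph{from} $\sigma'_{k+1}$, and can simply repeat step~1 verbatim using the surjection $|\cdot|^{k+1/2}\rtimes\sigma'_k\twoheadrightarrow\sigma'_{k+1}$. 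The closed-orbit jets then carry exponents $k+1+j$, all strictly above the top exponent $k$ of $\sigma_k$, so Proposition~\ref{vanishing lemma} applies for every $j\geq 0$ with no boundary case. The open orbit produces $\Hom_{\Or(V'_k)}(\sigma_k,\sigma'_k)$. This is precisely what the paper does.

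Your alternative route---embedding $\sigma'_{k+1}\hookrightarrow|\cdot|^{-(k+1/2)}\rtimes\sigma'_k$ and analyzing a Hom \emph{into} the induced representation---is where things go wrong. After the codimension-one shift the $j$-th closed-orbit jet carries exponent $-k+j$. Your claim that ``passing to the MVW dual makes the leading exponent positive'' is incorrect: for classical groups one has $(\chi\rtimes\tau)^\vee\cong\chi\rtimes\tau^\vee$, so the exponent does not flip under contragredient. Consequently Proposition~\ref{vanishing lemma} (which requires the source exponent to exceed the top Langlands exponent $k$ of $\sigma_k$) fails to apply not only for $j=0$ but for every $0\leq j\leq 2k$. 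The obstacle you flagged is therefore considerably larger than you indicated, and is entirely avoided by keeping $\sigma'_{k+1}$ in the source slot as the paper does.
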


\begin{proof}
By \eqref{sigmak}, we have a natural surjection
\[
  |\cdot|^{k+1} \rtimes \sigma_k \twoheadrightarrow \sigma_{k+1}.
\]
Then assumption \eqref{assumption Bessel} implies that
\begin{equation}\label{total contribution bessel}
  \Hom_{\Or(V'_{k+1})}\left(|\cdot|^{k+1} \rtimes \sigma_k, \sigma'_{k+1}\right) \neq 0.
\end{equation}
We fix two vectors $e_{k+1},f_{k+1}\in V'_{k+1}$, such that they form a (orthogonal) hyperbolic plane, i.e., 
\[
    \left\langle e_{k+1},e_{k+1} \right\rangle = \left\langle f_{k+1},f_{k+1} \right\rangle = 0 \quad \mbox{and}\quad \left\langle e_{k+1},f_{k+1} \right\rangle = 1,
\]
and we have decompositions
\[
  V'_{k+1} = \left\langle e_{k+1} \right\rangle \oplus V'_k \oplus \left\langle f_{k+1} \right\rangle
  \quad \mbox{and} \quad
  V_{k+1} = \left\langle e_{k+1} \right\rangle \oplus V_k \oplus \left\langle f_{k+1} \right\rangle.
\]
Let $Q_{X}$ be the parabolic subgroup of $\Or(V_{k+1})$ stabilizing the isotropic line $X = \left\langle e_{k+1} \right\rangle$, with Levi component $M_{X} \cong \GL_1(\BR) \times \Or(V_k)$. Let $\mathcal X = \Or(V_{k+1})/Q_{X}$ be the flag variety of $\Or(V_{k+1})$
consisting of all isotropic lines in $V_{k+1}$, and $\mathcal E$ the $\Or(V_{k+1})$-equivariant tempered vector bundle on $\mathcal X$ whose fiber representation at $X\in\mathcal{X}$ is
\[
  \left(|\cdot|^{k+1} \boxtimes \sigma_k\right) \otimes \delta_{Q_{X}}^{\frac{1}{2}}.
\]
Here, the modulus character $\delta_{Q_{X}}$ factors through the projection to $\GL_1(\BR)$, and
\begin{equation}\label{modular orthogoal}
  \delta_{Q_{X}}(a) = |a|^{\dim V_k} \quad \mbox{for } a \in \GL_1(\mathbb R).
\end{equation}
Then we have an isomorphism of $\Or(V_{k+1})$-representations
\begin{equation}\label{bessel iso}
   \Gamma^{\mathcal S}(\mathcal X, \mathcal E) \cong |\cdot|^{k+1} \rtimes \sigma_k.   
\end{equation}

\vskip 5pt

Now we can appeal to the Mackey theory (or more precisely, Borel's lemma \Cref{P:BorelLemma}). There are two $\Or(V'_{k+1})$-orbits on $\mathcal X$:
\begin{itemize}
  \item the closed orbit $\mathcal Z$, consisting of isotropic lines in $V_{k+1}$
    that are contained in $V'_{k+1}$;
  \vskip 5pt
  \item the open orbit $\mathcal U$, consisting of isotropic lines in $V_{k+1}$ that
    are not contained in $V'_{k+1}$.
\end{itemize}
We have a short exact sequence of $\Or(V'_{k+1})$-representations
\begin{equation*}%\label{exact V'k1}
  0 \longrightarrow
  \Gamma^{\mathcal S}(\mathcal U, \mathcal E)
  \longrightarrow
  \Gamma^{\mathcal S}(\mathcal X,\mathcal E)
  \longrightarrow
  \Gamma^{\mathcal S}_{\mathcal Z}(\mathcal X,\mathcal E)
  \longrightarrow 0,
\end{equation*}
where $\Gamma^{\mathcal S}_{\mathcal Z}(\mathcal X,\mathcal E)\coloneqq \Gamma^{\mathcal S}(\mathcal X,\mathcal E)/\Gamma^{\mathcal S}(\mathcal U, \mathcal E)$. This induces an exact sequence of Hom spaces
\begin{equation*}%\label{long exact sequence Bessel}
%\begin{split}
  %0 \longrightarrow
  \Hom_{\Or(V'_{k+1})}\left(\Gamma^{\mathcal S}_{\mathcal Z}\left(\mathcal X,\mathcal E\right), \sigma'_{k+1}\right)
  \longrightarrow
  \Hom_{\Or(V'_{k+1})}\left(\Gamma^{\mathcal S}\left(\mathcal X,\mathcal E\right),\sigma'_{k+1}\right)
  \longrightarrow
  \Hom_{\Or(V'_{k+1})}\left(\Gamma^{\mathcal S}\left(\mathcal U, \mathcal E\right), \sigma'_{k+1}\right).
  %\longrightarrow \cdots.
%\end{split}
\end{equation*}
We claim that the open orbit has non-zero contribution, namely
\[
    \Hom_{\Or(V'_{k+1})}\left(\Gamma^{\mathcal S}\left(\mathcal U, \mathcal E\right), \sigma'_{k+1}\right) \neq 0.
\]
Note that by \eqref{total contribution bessel}, \eqref{bessel iso} and the exact sequence above, to show our claim, it suffices to show that 
\[
    \Hom_{\Or(V'_{k+1})}\left(\Gamma^{\mathcal S}_{\mathcal Z}\left(\mathcal X,\mathcal E\right), \sigma'_{k+1}\right)=0.
\]
We analyze $\Gamma^{\mathcal S}_{\mathcal Z}\left(\mathcal X,\mathcal E\right)$ using \Cref{P:BorelLemma}. Choose the representative $X = \left\langle e_{k+1}\right\rangle \in \mathcal Z$. Then its stabilizer $\Stab_{\Or(V'_{k+1})}(X) = Q'_{X}$ is a parabolic subgroup of $\Or(V'_{k+1})$, with Levi component $M'_{X} \cong \GL_1(\BR) \times \Or(V'_k)$. Hence $\mathcal{E}\big|_{\mathcal{Z}}$ is isomorphic to a $\Or(V'_{k+1})$-equivariant bundle over $\mathcal{Z} \cong \Or(V'_{k+1})/ Q'_{X}$ whose fiber representation at $X\in\mathcal{Z}$ is
\[
  \left(\left(|\cdot|^{k+1} \boxtimes \sigma_k\right) \otimes \delta_{Q_{X}}^{\frac{1}{2}}\right)\Big|_{Q'_{X}}
  \cong |\cdot|^{k+1+\frac{\dim V_k}{2}}\boxtimes \left(\sigma_k\big|_{\Or(V'_k)}\right)
  \qquad \text{(by \eqref{modular orthogoal})}.
\]
Note that the modulus character $\delta_{Q'_{X}}$ factors through the projection to $\GL_1(\BR)$, and we have $\delta_{Q'_{X}}(a) = |a|^{\dim V'_k}$ for $a \in \GL_1(\mathbb R)$.
It follows that
\begin{align*}
  \Gamma^{\mathcal S}\left(\mathcal{Z},\mathcal{E}\big|_{\mathcal{Z}}\right)
  &\cong 
  \unind_{Q'_{X}}^{\Or(V'_{k+1})}\left(|\cdot|^{k+1+\frac{\dim V_k}{2}}\boxtimes \left(\sigma_k\big|_{\Or(V'_k)}\right)\right) \\
  &\cong \ind_{Q'_{X}}^{\Or(V'_{k+1})}\left(|\cdot|^{k+\frac{3}{2}}\boxtimes \left(\sigma_k\big|_{\Or(V'_k)}\right)\right) \\ %& & \text{(by \eqref{modular orthogonal 2})}
  &= |\cdot|^{k+\frac{3}{2}} \rtimes \left(\sigma_k\big|_{\Or(V'_k)}\right)
\end{align*}
as $\Or(V'_{k+1})$-representations. On the other hand, the complexified conormal bundle $\mathcal N_{\mathcal Z|\mathcal X}^*$ is a $\Or(V'_{k+1})$-equivariant bundle over $\mathcal Z$ whose fiber representation at $X\in\mathcal{Z}$ is $\sgn|\cdot| \boxtimes\mathbbm{1}_{\Or(V'_k)}$. By \Cref{P:BorelLemma}, there is an $\Or(V'_{k+1})$-equivariant filtration on $\Gamma^{\mathcal S}_{\mathcal Z}(\mathcal X,\mathcal E)$, with graded pieces
\[
  \Gamma^{\mathcal S}\left(\mathcal Z, \Sym^j\mathcal N_{\mathcal Z|\mathcal X}^* \otimes \mathcal E\big|_\mathcal Z\right) \cong
  \left(\sgn^j |\cdot|^{k+\frac{3}{2}+j}\right)\rtimes\left(\sigma_k\big|_{\Or(V'_k)}\right)
  \quad \mbox{for } j \in \mathbb N.
\]
Recall that in the construction of $\sigma'_{k+1}$ (see \eqref{sigma'k}), all ``exponents'' in the standard module of $\sigma'_{k+1}$ are bounded above by $k+\frac{1}{2}$, hence we are in a situation that \Cref{vanishing lemma} is applicable. It follows that
\begin{equation*}%\label{Vanishing bessel boundary}
  \Hom_{\Or(V'_{k+1})}\left(\left(\sgn^j |\cdot|^{k+\frac{3}{2}+j}\right)\rtimes\left(\sigma_k\big|_{\Or(V'_k)}\right), \sigma'_{k+1}\right)
  = 0
\end{equation*}
for all $j \in \mathbb N$. We conclude that $\Hom_{\Or(V'_{k+1})}\left(\Gamma^{\mathcal S}_{\mathcal Z}\left(\mathcal X,\mathcal E\right), \sigma'_{k+1}\right)=0$ and our claim holds.

\vskip 5pt

Next we compute the contribution of the open orbit. Recall that $V = V' \oplus \left\langle e\right\rangle$ with $\left\langle e,e\right\rangle_V = -1$ (see \eqref{VV'}). Choose a representative
\[
  Y = \left\langle e + \frac{e_{k+1}+f_{k+1}}{\sqrt{2}} \right\rangle \in \mathcal U.
\]
Then $\Stab_{\Or(V'_{k+1})}(Y) \cong \Or(V_k)$, and $\mathcal{E}\big|_{\mathcal U}$ is isomorphic to a $\Or(V'_{k+1})$-equivariant bundle over $\mathcal U$ whose fiber representation at $Y$ is $\sigma_k$. Therefore,
\[
  \Gamma^{\mathcal S}(\mathcal U, \mathcal E) \cong \unind_{\Or(V_k)}^{\Or(V'_{k+1})}\left(\sigma_k\right).
\]
By the Frobenius reciprocity in \Cref{Frobenious cor}, we obtain that
\begin{equation*}%\label{contribution open bessel}
\begin{split}
  \Hom_{\Or(V'_{k+1})}\left(\Gamma^{\mathcal S}(\mathcal U, \mathcal E), \sigma'_{k+1}\right)
  &\cong
  \Hom_{\Or(V'_{k+1})}\left(\unind_{\Or(V_k)}^{\Or(V'_{k+1})}\left(\sigma_k\right), \sigma'_{k+1}\right)
  \\
  & \cong  \Hom_{\Or(V_k)}\left((\sigma'_{k+1})^\vee, \sigma^\vee_k\right)\\
  &\cong
  \Hom_{\Or(V_k)}\left(\sigma'_{k+1}, \sigma_k\right).
\end{split}
\end{equation*}
Here in the last isomorphism, we also make use of the fact that irreducible representations of orthogonal groups are self-dual. Our claim in the previous paragraph then implies that %Combining our claim%\eqref{contribution open orbit Bessel} and \eqref{contribution open bessel} we deduce that
\[
  \Hom_{\Or(V_k)}\left(\sigma'_{k+1}, \sigma_k\right) \neq 0.
\]
Note that by our construction \eqref{sigma'k}, there is also a natural surjection
\[
    |\cdot|^{k+\frac{1}{2}} \rtimes \sigma'_k \twoheadrightarrow \sigma'_{k+1}.
\]
Applying the same argument above using this surjection, we obtain that
\[
  \Hom_{\Or(V'_k)}\left(\sigma_{k}, \sigma'_k\right) \neq 0
\]
as desired.

\end{proof}

\vskip 5pt

Using \Cref{L:ind1} iteratively, we conclude from \eqref{nontempered Bessel} that $\Hom_{\Or(V')}\left(\sigma,\sigma'\right) \neq 0$. This completes the proof of \Cref{P:tempBessel}.

\section{Existence of the Fourier-Jacobi model}\label{section: stable 2}
We shall prove the existence of the Fourier-Jacobi model in a Vogan L-packet in this section. 

\subsection{Desideratum II: Existence}
We retain the notations in \Cref{Desideratum 1}. Our desideratum in this section is the following.
\begin{thm}\label{P:RtoL}
Let $\wt \phi: \WW_\BR\rightarrow \Sp(M)$ be a tempered L-parameter of $\Mp(W)$, and $\phi: \WW_\BR \rightarrow \SO(N)$ a tempered L-parameter of $\Sp(W)$, where $M$ is the symplectic representation of $\WW_\BR$ associated to $\wt\phi$, and $N$ is the orthogonal representation associated to $\phi$. Then there exists an $\wt\pi\in\Pi_{\wt\phi}(\Mp(W))$ and $\pi\in\Pi_{\phi}(\Sp(W))$, such that the Hom space
\begin{equation*}
      \Hom_{\Mp(W)}\left(\wt \pi\boxtimes \pi, \omega_\psi\right)\neq 0.
\end{equation*}
\end{thm}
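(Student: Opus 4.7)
The plan is to invert the chain of reductions of \Cref{section: stable 1}: we start from the tempered Bessel GGP (\Cref{GGP Bessel}), climb up to a non-tempered Bessel statement, and then descend via the stable-range seesaw \eqref{E:SS9} to the tempered Fourier--Jacobi.

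\emph{Building the candidate pair.} We keep $V = V'\oplus L$ with $L=\langle -1\rangle$ as in \Cref{Desideratum 1}, so that $\disc V=\disc V'=1$ and the shifted parameters $\theta(\phi)$ on $\SO(V)$ with $\theta(N)=N\oplus \mathbbm{1}$ and $\theta(\wt\phi)$ on $\SO(V')$ are both tempered. \Cref{GGP Bessel} will furnish a unique pair $(\sigma,\sigma')$ on some relevant pure inner form $V_\alpha\supset V'_\alpha$ (necessarily with $V_\alpha/V'_\alpha\cong L$) satisfying $\Hom_{\Or(V'_\alpha)}(\sigma,\sigma')\neq 0$ and whose characters match the Bessel distinguished ones \eqref{distigulished character Bessel}. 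The Shimura--Waldspurger correspondence \eqref{Shimura} then produces $\wt\pi_0:=\theta_{W,V'_\alpha,\psi}(\sigma')\in\Pi_{\wt\phi}(\Mp(W))$, and \Cref{T:AlmostEqualrk} produces $\pi_0\in\Pi_\phi(\Sp(W))$ with $\sigma\cong\theta_{V_\alpha,W,\psi}(\pi_0^\vee)$. The same character calculation as at the end of the proof of \Cref{P:LtoR} will show that the Fourier--Jacobi distinguished characters of $(\wt\pi_0,\pi_0)$ coincide with $\eta_{N_1}\times\eta_M$, so this is the expected candidate pair.

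\emph{Climbing to non-tempered Bessel and applying the seesaw.} Form the Langlands quotients $\sigma_t,\sigma'_t$ on $\Or(V_\alpha+t\mathcal H),\Or(V'_\alpha+t\mathcal H)$ by the same recipe as in \eqref{sigmak}--\eqref{sigma'k}. The first task is to prove by induction on $t\geq 0$ the converse of \Cref{L:ind1}:
\[
\Hom_{\Or(V'_\alpha+t\mathcal H)}(\sigma_t,\sigma'_t)\neq 0.
\]
The inductive step runs the same Mackey orbit decomposition as in the proof of \Cref{L:ind1}, applied to the surjection $|\cdot|^{t+1}\rtimes\sigma_t\twoheadrightarrow \sigma_{t+1}$ and its companion on the $\sigma'$-side: \Cref{vanishing lemma} kills the closed-orbit contributions, while the open-orbit contribution identifies with the Hom at level $t$, propagating the nonvanishing one level up. This is the Archimedean analog of the non-tempered reduction of \cite[\S 1.4]{moeglin2012conjecture}. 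Once $t\geq \dim W$, we are in stable range for $\Sp(W)\to\Or(V_\alpha+t\mathcal H)$, so \Cref{Loke-Ma} and \Cref{C:towers} identify $\sigma_t\cong\Theta_{V_\alpha+t\mathcal H,W,\psi}(\pi_0^\vee)$, and the seesaw identity \eqref{E:SS9} applied to $V_\alpha,V'_\alpha$ in place of $V,V'$ reads
\[
\Hom_{\Sp(W)}\!\left(\Theta_{W,V'_\alpha+t\mathcal H,\psi}(\sigma'_t)\otimes \omega_{\bar\psi},\,\pi_0^\vee\right) = \Hom_{\Or(V'_\alpha+t\mathcal H)}(\sigma_t,\sigma'_t)\neq 0.
\]

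\emph{Extracting the tempered Fourier--Jacobi.} It remains to show that this nonvanishing forces $\Hom_{\Mp(W)}(\wt\pi_0\boxtimes\pi_0,\omega_\psi)\neq 0$. Via Kudla's induction principle, $\Theta_{W,V'_\alpha+t\mathcal H,\psi}(\sigma'_t)$ is built from $\wt\pi_0=\theta_{W,V'_\alpha,\psi}(\sigma')$ by successive parabolic inductions with characters $|\cdot|^{k+1/2}$; transporting everything to the Jacobi group through the equivalence of Fourier--Jacobi functors (\Cref{Sun thm} and \Cref{F_k and G_k vs parabolic}), any constituent other than $\wt\pi_0$ has its Fourier--Jacobi Hom with $\pi_0^\vee$ killed by \Cref{vanishing lemma} combined with the Sun--Zhu and Liu multiplicity-one theorem, leaving $\wt\pi_0$ as the sole contributor. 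The hard part will be precisely this extraction: on the $\Or(V'_\alpha+t\mathcal H)\to \Mp(W)$ side we are never in stable range (the Witt index $r_W$ of $W$ is fixed while $\dim(V'_\alpha+t\mathcal H)$ grows), so we cannot invoke \Cref{Loke-Ma} to make the big theta irreducible, and must ``peel off'' the non-tempered constituents on the Jacobi side using the induction principle and \Cref{vanishing lemma}; this step is the Archimedean substitute for the $p$-adic irreducibility of equal-rank big theta exploited in \cite{gan2016gross,atobe2018local}.
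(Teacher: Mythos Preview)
Your overall architecture is right, and your candidate pair $(\wt\pi_0,\pi_0)$ is exactly the one the paper uses. But the middle step, the ``converse of \Cref{L:ind1}'', is a genuine gap. In the proof of \Cref{L:ind1} one has the exact sequence
\[
\Hom\bigl(\Gamma^{\cS}_{\mathcal Z},\,\sigma'_{k+1}\bigr)\longrightarrow\Hom\bigl(\Gamma^{\cS}(\mathcal X),\,\sigma'_{k+1}\bigr)\longrightarrow\Hom\bigl(\Gamma^{\cS}(\mathcal U),\,\sigma'_{k+1}\bigr),
\]
and the argument runs left-to-right: nonvanishing of the middle term plus vanishing of the leftmost forces the rightmost (the level-$k$ Hom) to be nonzero. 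To go upward you would need the last map to be surjective, i.e.\ the vanishing of an $\operatorname{Ext}^1$ of the closed-orbit piece, which \Cref{vanishing lemma} does not supply. So ``the open-orbit contribution identifies with the Hom at level $t$, propagating the nonvanishing one level up'' does not follow from the same Mackey analysis; the filtration only pushes nonvanishing downward.

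There is a second problem in your extraction step. After the seesaw you land on $\Theta_{W,V'_\alpha+t\mathcal H,\psi}(\sigma'_t)$, a big theta lift \emph{from} a large orthogonal group \emph{to} the fixed small group $\Mp(W)$. Kudla's induction principle (and hence \Cref{C:towers}) describes how big theta behaves as one climbs the Witt tower on the \emph{target} side; it says nothing about the structure of $\Theta_{W,V'_k,\psi}(\sigma'_k)$ as $k$ grows, so your claim that this is ``built from $\wt\pi_0$ by successive parabolic inductions'' is unsupported.

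The paper sidesteps both issues by climbing the \emph{symplectic} Witt tower $W_t=W\oplus\mathcal H'^t$ rather than the orthogonal one. For $t>\dim V$ the pair $\Or(V'_\alpha)\times\Mp(W_t)$ is now in the stable range, so \Cref{Loke-Ma} makes $\Theta_{W_t,V'_\alpha,\psi}(\sigma')\cong\wt\pi_t$ irreducible on the left of the seesaw \eqref{SeeSaw-2}; the tempered Bessel nonvanishing $\Hom_{\Or(V'_\alpha)}(\sigma,\sigma')\neq 0$ then feeds directly into the right side (since $\sigma$ is a quotient of $\Theta_{V_\alpha,W_t,\psi}(\pi_t^\vee)$), with no upward induction required. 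One then peels off \emph{downward} along the symplectic tower via \Cref{L:ind2}, which is the workable direction of the Mackey argument, now carried out on the Jacobi group side.
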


\vskip 5pt

Similar to the proof of \Cref{P:LtoR}, we first write 
\[
\Hom_{\Mp(W)}\left(\wt \pi\boxtimes \pi, \omega_\psi\right)\cong 
\Hom_{\Sp(W)}\left(\wt \pi \boxtimes \overline{\omega}_{\psi}, \pi^\vee\right).
\]
We will use Bessel models and a seesaw identity argument to produce a non-zero element in the right hand side Hom space of the above equation. 

\vskip 5pt

%Note that \Cref{P:RtoL} and \Cref{P:LtoR} imply \Cref{GGP FJ}. 
%Again we consider the see-saw diagram let $V$ and $V'$ as in the diagram (\ref{SeeSaw-0}). 
We fix a $2n+1$-dimensional quadratic space $V'$ of discriminant $+1$, and set $V\coloneqq V'\oplus L$, where $L$ is an $1$-dimensional quadratic space with $\disc(L)=-1$. Let $\theta(\wt\phi)$ and $\theta(\phi)$ be the tempered L-parameters of $\SO(V')$ and $\SO(V)$ with associated orthogonal representations $M$ and $\theta(N) = N \oplus \mathbbm{1}$ of $\WW_{\BR}$, respectively. Then by \Cref{GGP Bessel}, there exists a relevant pure inner form of $\SO(V')\times \SO(V)$, corresponding to a pair of quadratic spaces $V'_\alpha\subset V_\alpha$, together with irreducible representations $\sigma'_\alpha\in \Pi_{\theta(\wt \phi)}(\SO(V'_\alpha))$ and $\sigma_\alpha \in \Pi_{\theta(\phi)}(\SO(V_\alpha))$, such that  
\[
\Hom_{\SO(V'_\alpha)}\left(\sigma_\alpha , \sigma'_\alpha\right)\neq 0.
\]
%Note that the L-parameter of $\theta(\phi)$ contains a character, it follows from {\color{red}\textit{the property of the LLC (ref needed...)}} that there exists two extensions $\sigma^{\pm}$ of $\sigma$ to $\Or(V)$, i.e., 
%\[
%\sigma^+~\big|_{\SO(V)}\cong \sigma^-~\big|_{\SO(V)}\cong \sigma.
%\]
%Moreover, we know that $\sigma^+\cong \sigma^-\otimes \det$. 
%By \Cref{T:AlmostEqualrk} and \Cref{metaplectic group}, we know that there exists a $\pi\in \Pi_\phi(\Sp(W))$ and $\wt \pi\in \Pi_{\wt \phi}(\Mp(W))$ such that \zjl{$\pi$ and $\pi^\vee$}
%\[
%\sigma =\theta_{V,W,\psi}(\pi^\vee)|_{\SO(V)},\quad \sigma'=\theta_{V',W,\psi}(\wt \pi)|_{\SO(V')}.
%\]
%We denote by $\wt \sigma=\theta_{V,W,\psi}(\wt \pi)$. 
By \Cref{T:AlmostEqualrk}, we know that there exists an irreducible representation $\pi$ of $\Sp(W)$, such that $\pi^\vee\in \Pi_\phi(\Sp(W))$, and 
\[
\sigma_\alpha =\theta_{V,W,\psi}\left(\pi^\vee\right)\big|_{\SO(V_\alpha)}.
\]
It follows from \cite[Theorem 4.9]{MR3194648}) that one also has $\pi\in \Pi_{\phi}(\Sp(W))$. Let $\sigma=\theta_{V,W,\psi}(\pi^\vee)$ be an irreducible representation of $\Or(V_\alpha)$. Then we have  
\[
\Hom_{\SO(V'_\alpha)}\left(\sigma_\alpha,\sigma'_\alpha\right)\cong  \Hom_{\Or(V'_\alpha)}\left(\sigma,\Ind_{\SO(V'_\alpha)}^{\Or(V'_\alpha)}\left(\sigma'_\alpha\right)\right).
\]
%where $\sigma'^{+}$ and  $\sigma'^{-}$ are two extensions of $\sigma'$ to $\Or(V')$. 
This implies that there exists an irreducible summand $\sigma'$ of $\Ind_{\SO(V'_\alpha)}^{\Or(V'_\alpha)}\left(\sigma'_\alpha\right)$ such that 
\begin{equation}\label{exten to Or}
 %\Hom_{\Or(V')}\left(\theta_{V,W}(\pi^\vee),\sigma'\right)\cong  
 \Hom_{\Or(V'_\alpha)}\left(\sigma,\sigma'\right)\neq 0.   
\end{equation}
Now consider the seesaw diagram as in \eqref{SeeSaw-0}. The associated seesaw identity reads
\begin{equation*}%\label{E:SS1}
    \Hom_{\Sp(W)}\left(\Theta_{W,V'_\alpha,\psi}(\sigma')\boxtimes {\omega}_{\bar\psi}, \pi^\vee\right) = \Hom_{\Or(V'_\alpha)}\left(\Theta_{V_\alpha,W,\psi}\left(\pi^\vee\right),\sigma'\right).
\end{equation*}
Note that ${\omega}_{\bar\psi}\cong \overline{\omega}_{\psi}$, and $\sigma$ is a quotient of $\Theta_{V_\alpha,W,\psi}\left(\pi^\vee\right)$. 
%There is an injection 
%\begin{equation}\label{Small to big}
% \Hom_{\Or(V')}\left(\sigma,\sigma'\right)=\Hom_{\Or(V')}\left(\theta_{V,W}(\pi^\vee),\sigma'\right)\hookrightarrow 
%\Hom_{\Or(V')}\left(\Theta_{V,W} (\pi^\vee),\sigma'\right).   
%\end{equation}
We conclude from \eqref{exten to Or} and above seesaw identity that 
\begin{equation*}%\label{E:SS4}
     \Hom_{\Sp(W)}\left(\Theta_{W,V'_\alpha,\psi}(\sigma')\boxtimes {\omega}_{\bar\psi}, \pi^\vee\right)\neq 0.
\end{equation*}
In particular, we know that $\Theta_{W,V'_\alpha,\psi}(\sigma')\neq 0$. Let $\wt \pi=\theta_{W,V'_\alpha,\psi}(\sigma')$. It follows from \eqref{LLC for metaplectic} that $\wt \pi \in \Pi_{\wt \phi}(\Mp(W))$. Again, due to the issue of big theta lift, we are not able to conclude 
\[
  \Hom_{\Sp(W)}\left(\wt \pi\boxtimes\overline{\omega}_{\psi}, \pi^\vee\right) \neq 0
\]
from the non-vanishing of the Hom space on the left hand side of above seesaw identity.

\vskip 5pt

\subsection{Stable range seesaw II: some non-tempered Fourier-Jacobi models}

As in \Cref{section: stable 1}, we circumvent the issue of the big theta lift by working with some stable range theta lifts.
 %   \sigma = \pi\left(\theta\left(\phi\right),\theta\left(\eta_\pi\cdot\kappa\right)\right) \quad \text{and} \quad \sigma'=\pi\left(\theta\left(\phi'\right),\theta\left(\eta_{\pi'}\right)\right),
%\], where (blabla) are defined in \Cref{E:FJU-1} and \Cref{E:FJU-2}. Then by \Cref{T:Equalrk} and \Cref{T:AlmostEqualrk}, we have 
%\[
%   \sigma =\theta_{V,W}(\pi^\vee) \quad \textit{and} \quad \pi'=\theta_{W,V}(\sigma').
%\]
For any non-negative integer $k$, we put 
\[
    W_k = W \oplus  \mathcal H'^k,
\]
where $\mathcal H'$ is the (symplectic) hyperbolic plane. Note that both $\wt \pi$ and $\pi$ are tempered, we consider the Langlands quotient of the standard modules
\begin{equation}\label{Theta tower O to Mp}
     \wt \pi_k = LQ\left(|\cdot|^{k-\frac{1}{2}}\times\cdots\times |\cdot|^{\frac{1}{2}}\rtimes \wt \pi\right), 
\end{equation}
and also
\begin{equation}\label{Theta tower O to Sp}
     \pi^\vee_k = LQ\left(|\cdot|^{k-1}\times\cdots\times |\cdot|^{1}\rtimes\pi^\vee_1 \right),
\end{equation}
where $\pi_1^\vee\coloneqq\theta_{W_1,V_\alpha,\psi}(\sigma)$ is tempered. Since $\sigma$ has non-zero theta lift to $\Sp(W)$, by \cite[Proposition C.1]{MR3166215} one knows that $\pi_1^\vee$ is a direct summand of $|\cdot|^0\rtimes\pi^\vee$. According to Corollary \ref{C:towers}, we know that if $k\geq\dim V$, then $\pi^\vee_k \cong \Theta_{W_k,V_\alpha,\psi}(\sigma)$ and $\wt \pi_k \cong \Theta_{W_k,V'_\alpha,\psi}(\sigma')$. We fix a positive integer $t>\dim V$. Consider the following seesaw diagram:

\begin{equation*}\label{SeeSaw-2}\tag{$\natural.2$}
\begindc{\commdiag}[8]
\obj(-60,40)[a]{$\Mp(W_t)\times_{\{\pm1\}} \Mp(W_t)$}
\obj(-54,36)[aa]{$~$}
\obj(-60,-40)[b]{$\Sp(W_t)$}
\obj(-54,-36)[bb]{$~$}
\obj(60,40)[c]{$\Or(V_\alpha)$}
\obj(54,36)[cc]{$~$}
\obj(60,-40)[d]{$\Or(V'_\alpha)\times \Or(L)$}
\obj(54,-36)[dd]{$~$}
\mor{a}{b}{}[\atleft,\solidline]
\mor{aa}{dd}{}[\atleft,\solidline]
\mor{c}{d}{}[\atleft,\solidline]
\mor{cc}{bb}{}[\atleft,\solidline]
\enddc
\qquad 
\begindc{\commdiag}[8]
\obj(-60,40)[a]{$\Theta_{W_t,V'_\alpha,\psi}(\sigma')\boxtimes {\omega}_{\bar\psi}$}
\obj(-54,36)[aa]{$~$}
\obj(-60,-40)[b]{$\pi^\vee_t$}
\obj(-54,-36)[bb]{$~$}
\obj(60,40)[c]{$\Theta_{V_\alpha,W_t,\psi}(\pi^\vee_t)$}
\obj(54,36)[cc]{$~$}
\obj(60,-40)[d]{$\sigma'$}\obj(57,-38)[dd]{$~$}
\mor{a}{b}{}[\atleft,\solidline]
\mor{aa}{dd}{}[\atleft,\solidline]
\mor{c}{d}{}[\atleft,\solidline]
\mor{cc}{bb}{}[\atleft,\solidline]
\enddc
\end{equation*}
Then the associated seesaw identity reads:
\begin{equation*}\label{E:SS2}\tag{$\maltese.2$}
    \Hom_{\Sp(W_t)}\left(\Theta_{W_t,V'_\alpha,\psi}(\sigma')\boxtimes \overline{\omega}_{\psi}, \pi^\vee_t \right) = \Hom_{\Or (V'_\alpha)}\left(\Theta_{V_\alpha,W_t,\psi}(\pi^\vee_t),\sigma'\right).
\end{equation*}
Note that $\Theta_{W_t,V'_\alpha,\psi}(\sigma')\cong \wt\pi_t$, and $\sigma$ is a quotient of $\Theta_{V_\alpha,W_t,\psi}(\pi^\vee_t)$. Therefore, we deduce from \eqref{exten to Or} and \eqref{E:SS2} that 
\[
    \Hom_{\Sp(W_t)}\left(\wt\pi_t\boxtimes \overline{\omega}_{\psi}, \pi^\vee_t \right) \neq 0.
\]
In the next subsection, we shall use an analog of Lemma \ref{L:ind1} to ``peel off'' the non-tempered part of the Hom space above.

\vskip 5pt

\subsection{Peeling off the non-tempered part II: Fourier--Jacobi model}\label{Descent along the Witt tower}

This section is devoted to proving the following lemma. 
\begin{lem}\label{L:ind2}
Suppose that
\begin{equation}\label{assuption FJ}
  \Hom_{\Sp(W_{k+1})}\left(\wt \pi_{k+1}\boxtimes \overline{\omega}_{\psi}, \pi_{k+1}^{\vee}\right) \neq 0
\end{equation}
for some non-negative integer $k$. Then
\[
  \Hom_{\Sp(W_{k})}\left(\wt \pi_{k}\boxtimes \overline{\omega}_{\psi}, \pi_{k}^{\vee}\right) \neq 0.
\]
\end{lem}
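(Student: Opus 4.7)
The plan is to mirror the proof of \Cref{L:ind1} in the Fourier--Jacobi setting, with the Jacobi group $\J(W_{k+1})$ playing the role of $\Or(V_{k+1})$ and $\Sp(W_{k+1}) \subset \J(W_{k+1})$ playing the role of $\Or(V'_{k+1}) \subset \Or(V_{k+1})$. Via the equivalence of categories (\Cref{Sun thm}), I set $\pi_k^{\J} := \mathcal F_{\bar\psi}(\wt\pi_k)$, an irreducible representation of $\J(W_k)$ whose Heisenberg centre acts by $\bar\psi$; the hypothesis \eqref{assuption FJ} and the desired conclusion then become $\Hom_{\Sp(W_{k+1})}(\pi_{k+1}^{\J},\pi_{k+1}^{\vee})\neq 0$ and $\Hom_{\Sp(W_k)}(\pi_k^{\J},\pi_k^{\vee})\neq 0$, respectively. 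Two surjections drive the argument: applying $\mathcal F_{\bar\psi}$ to the Langlands-quotient surjection $|\cdot|^{k+1/2}\rtimes\wt\pi_k\twoheadrightarrow\wt\pi_{k+1}$ from \eqref{Theta tower O to Mp} and invoking \Cref{F_k and G_k vs parabolic} yields the Jacobi-side surjection $|\cdot|^{k+1/2}\rtimes\pi_k^{\J}\twoheadrightarrow\pi_{k+1}^{\J}$, while the Langlands classification of $\pi_{k+1}^{\vee}$ provides the symplectic-side surjection $|\cdot|^{k}\rtimes\pi_k^{\vee}\twoheadrightarrow\pi_{k+1}^{\vee}$.

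First round: precomposition with the Jacobi-side surjection gives $\Hom_{\Sp(W_{k+1})}(|\cdot|^{k+1/2}\rtimes\pi_k^{\J},\pi_{k+1}^{\vee})\neq 0$. Viewing $|\cdot|^{k+1/2}\rtimes\pi_k^{\J}=\ind_{P_X}^{\J(W_{k+1})}(|\cdot|^{k+1/2}\boxtimes\pi_k^{\J})$ as Schwartz sections over $\mathcal F^{\J}_{W_{k+1},1}=\J(W_{k+1})/P_X$, I would apply \Cref{P:BorelLemma} to its restriction to $\Sp(W_{k+1})$. There are precisely two $\Sp(W_{k+1})$-orbits: the closed orbit (pairs $(X,0)$, stabiliser $Q_X$) and the open orbit (pairs with $v^{*}\neq 0$, stabiliser $\J(W_k)\cong\Sp(W_k)\ltimes U_X$, where $U_X$ is the unipotent radical of $Q_X$ and is isomorphic to $\Hei(W_k)$). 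After absorbing the normalisation $\delta_{P_X}^{1/2}/\delta_{Q_X}^{1/2}$ on the $\GL_1$-factor, the closed-orbit graded pieces take the form $\sgn^\varepsilon|\cdot|^{k+1+j}\rtimes\pi_k^{\J}|_{\Sp(W_k)}$ with $j\geq 0$, $\varepsilon\in\{0,1\}$; since the largest Langlands exponent of $\pi_{k+1}^{\vee}$ is $k<k+1+j$, these are annihilated by \Cref{vanishing lemma}. A conjugation computation via the Heisenberg element sending $(X,0)\mapsto(X,v^{*}_1)$ identifies the open-orbit fibre with $\pi_k^{\J}$ as a $\J(W_k)$-representation (in particular, $Z(U_X)\cong Z(\Hei(W_k))$ acts via $\bar\psi$). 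Frobenius reciprocity (\Cref{Frobenious cor}) together with the MVW involution (\Cref{thm MVW}) then produces $\Hom_{\J(W_k)}(\pi_{k+1}^{\vee}|_{\J(W_k)},\pi_k^{\J})\neq 0$.

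Second round: precomposing with the symplectic-side surjection gives $\Hom_{\J(W_k)}((|\cdot|^{k}\rtimes\pi_k^{\vee})|_{\J(W_k)},\pi_k^{\J})\neq 0$, to which I would apply a second Borel filtration. Unlike the Bessel situation, $\J(W_k)$ has three orbits on the isotropic-line Grassmannian of $W_{k+1}$: the closed orbit $\{X\}$, the orbit of isotropic lines in $W_k$ (into which lines of the shape $\langle w+ce_{k+1}\rangle$ with $w\in W_k$ isotropic nonzero and $c\neq 0$ merge via the $U_X$-action), and the open orbit of lines non-orthogonal to $X$. On the two non-open orbits, the stabiliser in $\J(W_k)$ contains $Z(U_X)$, whose image in the Levi factor $\Sp(Y^{\perp}/Y)$ is either trivial (on $\{X\}$) or a one-parameter unipotent subgroup acting nilpotently on $\pi_k^{\vee}$ (on lines in $W_k$); in both cases the resulting one-parameter action cannot coincide with the multiplicative character $\bar\psi$ carried by $\pi_k^{\J}$, so these Hom spaces vanish. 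Only the open orbit (represented by $Y=\langle f_{k+1}\rangle$, stabiliser $\Sp(W_k)$, fibre $\pi_k^{\vee}$ after normalisation) contributes, and Frobenius reciprocity together with MVW delivers the desired $\Hom_{\Sp(W_k)}(\pi_k^{\J},\pi_k^{\vee})\neq 0$.

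The hard part will be the second round: the vanishing on the two non-open orbits is not of Langlands-exponent type but stems from a central-character obstruction on $Z(\Hei(W_k))$---any equivariant map from a nilpotent one-parameter unipotent action into a target carrying a non-trivial multiplicative character must be zero. Careful bookkeeping of modulus characters in both rounds, together with the clean identification of the open-orbit fibre with $\pi_k^{\J}$ (respectively $\pi_k^{\vee}$) as a representation of the stabiliser, are the main technical ingredients.
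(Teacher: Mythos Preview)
Your overall strategy matches the paper: two rounds of Mackey/Borel analysis, the first restricting the Jacobi ``parabolic induction'' $|\cdot|^{k+\frac12}\rtimes\pi_k^{\J}$ to $\Sp(W_{k+1})$, the second restricting the symplectic parabolic induction to $\J(W_k)$. The first round and the closed-orbit step of the second round are fine (though note \Cref{Frobenious cor} requires $H$ reductive, so for $H=\J(W_k)$ you must use \Cref{Frobenious} directly as the paper does, tracking the modulus characters by hand).

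The genuine gap is your vanishing argument on the \emph{middle} orbit of the second round. You argue that the centre $Z=Z(\Hei(W_k))$ acts on the fibre through a one-parameter unipotent subgroup of $\Sp(W_k)$ on $\pi_k^\vee$, and that this ``cannot coincide with the multiplicative character $\bar\psi$''. But this does not force the Hom space to vanish: in a Casselman--Wallach representation of $\Sp(W_k)$ a unipotent one-parameter subgroup neither acts by a character nor acts locally nilpotently, and such an action admits plenty of nontrivial $\psi$-equivariant functionals (think of Whittaker functionals on principal series of $\SL_2$). A central-character obstruction works only when the source carries a \emph{character} of $Z$ different from that of the target --- which is exactly what happens on the closed single-point orbit, but not here.

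The paper handles the middle orbit by a Langlands-exponent argument rather than a central-character one. It identifies the stabiliser of a representative line $Y'=\langle e_k\rangle$ with the Jacobi ``parabolic'' $P_{Y'}\subset\J(W_k)$, and after the normalisation bookkeeping the graded pieces become the Jacobi parabolic inductions
\[
\bigl(\sgn^j|\cdot|^{k+j+\frac12}\bigr)\rtimes\bigl(\pi_k\big|_{\J(W_{k-1})}\bigr),\qquad j\in\mathbb N.
\]
Since the target $\mathcal F_\psi(\wt\pi_k^\vee)$ has top Langlands exponent $k-\frac12<k+\frac12$, \Cref{vanishing lemma} (applied to the Jacobi group) kills each of these. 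This is the step you should replace your central-character heuristic with.
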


\vskip 5pt

\begin{proof}
The proof of this lemma is parallel to that of \Cref{L:ind1}; we therefore only sketch it. By \Cref{Sun thm} we know that when restricted to $\Sp(W_{k+1})$, one has
\[
  \wt\pi_{k+1}\boxtimes \overline{\omega}_{\psi} \cong \mathcal{F}_{\bar\psi}\left(\wt\pi_{k+1}\right).
\]
As explained in \eqref{fj MODEL rewrite}, we can rephrase \eqref{assuption FJ} as a branching problem from the Jacobi group $\J(W_{k+1})$ to the symplectic group $\Sp(W_{k+1})$ through the natural embedding $\Sp(W_{k+1})\hookrightarrow \J(W_{k+1})$. By \Cref{F_k and G_k vs parabolic} and the construction of $\wt\pi_{k+1}$ (see \eqref{Theta tower O to Mp}), there is a
surjection
\[
  |\cdot|^{k+\frac{1}{2}}\rtimes \mathcal{F}_{\bar\psi}\left(\wt\pi_{k}\right) \twoheadrightarrow \mathcal{F}_{\bar\psi}\left(\wt\pi_{k+1}\right),
\]
where we recall that $|\cdot|^{k+\frac{1}{2}}\rtimes \mathcal{F}_{\bar\psi}\left(\wt\pi_{k}\right)$ on the left hand side is the normalized ``parabolic induction'' of $\J(W_{k+1})$ (see \eqref{normalized Schwartz parabolic induction Jacobi} and \eqref{modular character Q_J}). Our assumption \eqref{assuption FJ} then implies that
\begin{equation}\label{k+1 piece}
  \Hom_{\Sp(W_{k+1})}\left(|\cdot|^{k+\frac{1}{2}}\rtimes \mathcal{F}_{\bar\psi}\left(\wt\pi_{k}\right), \pi_{k+1}^{\vee} \right) \neq 0.
\end{equation}
We fix two vectors $e_{k+1},f_{k+1}\in W_{k+1}$, such that they form a (symplectic) hyperbolic plane, and we have a decomposition
\[
  W_{k+1} = \left\langle e_{k+1}\right\rangle \oplus W_k \oplus \left\langle f_{k+1}\right\rangle.
\]
Let $\mathcal X = \mathcal F^{\J}_{W_{k+1},1}$ be the ``flag variety'' of $\J(W_{k+1})$ defined in \eqref{flag J}; it consists of pairs $\left(X,v^*\right)$ where $X$ is an isotropic line in $W_{k+1}$ and $v^*\in X^*$.
We fix a base point $\left(Y, 0\right)\in \mathcal F^{\J}_{W_{k+1},1}$, where $Y = \left\langle e_{k+1}\right\rangle$, and let $P_{Y}=\Stab_{\J(W_{k+1})}\left(Y,0\right)$ be a ``parabolic subgroup''  of $\J(W_{k+1})$, with ``Levi quotient'' $M_{Y} \cong \GL_1(\BR) \times \J(W_k)$. Let $\mathcal E$ be the $\J(W_{k+1})$-equivariant tempered vector bundle on $\mathcal X$ whose fiber representation at $\left(Y,0\right)\in\mathcal{X}$ is 
\[
  \left(|\cdot|^{k+\frac{1}{2}}\boxtimes \mathcal{F}_{\bar\psi}\left(\wt\pi_{k}\right) \right)
  \otimes \delta_{P_{Y}}^{\frac{1}{2}}.
\]
Then we have
\[
  \Gamma^\mathcal{S}(\mathcal X, \mathcal E) \cong |\cdot|^{k+\frac{1}{2}}\rtimes \mathcal{F}_{\bar\psi}\left(\wt\pi_{k}\right)
\]
as $\J(W_{k+1})$-representations. There are two $\Sp(W_{k+1})$-orbits on $\mathcal X$:
\begin{itemize}
  \item the closed orbit $\mathcal Z$, consisting of pairs $\left(X,0\right)$;
  
  \vskip 5pt
  
  \item the open orbit $\mathcal U$, consisting of pairs $\left(X,v^*\right)$ with $v^*\neq 0$.
\end{itemize}
\vskip 5pt
This yields an $\Sp(W_{k+1})$-equivariant short exact sequence
\begin{equation*}%\label{exact V'k1}
  0 \longrightarrow
  \Gamma^{\mathcal S}(\mathcal U, \mathcal E)
  \longrightarrow
  \Gamma^{\mathcal S}(\mathcal X,\mathcal E)
  \longrightarrow
  \Gamma^{\mathcal S}_{\mathcal Z}(\mathcal X,\mathcal E)
  \longrightarrow 0,
\end{equation*}
where $\Gamma^{\mathcal S}_{\mathcal Z}(\mathcal X,\mathcal E)\coloneqq \Gamma^{\mathcal S}(\mathcal X,\mathcal E)/\Gamma^{\mathcal S}(\mathcal U, \mathcal E)$. It induces an exact sequence of Hom spaces
\begin{equation*}
  \Hom_{\Sp(W_{k+1})}\left(\Gamma^{\mathcal S}_{\mathcal Z}\left(\mathcal X,\mathcal E\right), \pi_{k+1}^{\vee}\right)
  \longrightarrow
  \Hom_{\Sp(W_{k+1})}\left(\Gamma^{\mathcal S}\left(\mathcal X,\mathcal E\right),\pi_{k+1}^{\vee}\right)
  \longrightarrow
  \Hom_{\Sp(W_{k+1})}\left(\Gamma^{\mathcal S}\left(\mathcal U, \mathcal E\right), \pi_{k+1}^{\vee}\right).
\end{equation*}
We claim that the open orbit has non-zero contribution, namely
\[
    \Hom_{\Sp(W_{k+1})}\left(\Gamma^{\mathcal S}\left(\mathcal U, \mathcal E\right), \pi_{k+1}^{\vee}\right) \neq 0.
\]
Note that by \eqref{k+1 piece} and the exact sequence above, it suffices to show that 
\[
    \Hom_{\Sp(W_{k+1})}\left(\Gamma^{\mathcal S}_{\mathcal Z}\left(\mathcal X,\mathcal E\right), \pi_{k+1}^{\vee}\right)=0.
\]

\vskip 5pt

We analyze $\Gamma^{\mathcal S}_{\mathcal Z}\left(\mathcal X,\mathcal E\right)$ using \Cref{P:BorelLemma}. Choose the representative $\left(Y,0\right)\in \mathcal Z$. Then
\[
    Q_{Y}\coloneqq\Stab_{\Sp(W_{k+1})}\left(Y,0\right)
\]
is the parabolic subgroup of $\Sp(W_{k+1})$ stabilizing $Y$, with Levi component $\GL_1(\BR)\times \Sp(W_k)$. The modulus character of $Q_{Y}$ factor through $\GL_1(\BR)$, and $\delta_{Q_{Y}}(a) = |a|^{\dim W_k+2}$ for $a\in \GL_1(\mathbb R)$. It follows that
\begin{align*}
  \Gamma^{\mathcal S}\left(\mathcal{Z},\mathcal{E}\big|_{\mathcal{Z}}\right)
  &\cong 
  \unind_{Q_Y}^{\Sp(W_{k+1})}\left(\left(|\cdot|^{k+\frac{1}{2}}\boxtimes\mathcal{F}_{\bar\psi}\left(\wt\pi_k\right)\right) \otimes \delta_{P_{Y}}^{\frac{1}{2}}\big|_{Q_{Y}}\right) \\
  &\cong \unind_{Q_Y}^{\Sp(W_{k+1})}\left(|\cdot|^{k+2+\frac{\dim W_k}{2}}\boxtimes\left(\mathcal{F}_{\bar\psi}\left(\wt\pi_k\right)\big|_{\Sp(W_k)}\right)\right) \\
  &\cong |\cdot|^{k+1}\rtimes \left(\mathcal{F}_{\bar\psi}\left(\wt\pi_k\right)\big|_{\Sp(W_k)}\right)
\end{align*}
as $\Sp(W_{k+1})$-representations. On the other hand, the complexified conormal bundle $\mathcal N_{\mathcal{Z}|\mathcal{X}}^*$ is a $\Sp(W_{k+1})$-equivariant bundle over $\mathcal Z$ whose fiber representation at $\left(Y,0\right)\in\mathcal{Z}$ is $\sgn|\cdot|\boxtimes \mathbbm{1}_{\Sp(W_k)}$. By \Cref{P:BorelLemma}, there is an $\Sp(W_{k+1})$-equivariant filtration on $\Gamma^{\mathcal S}_{\mathcal Z}(\mathcal X,\mathcal E)$, with graded pieces
\[
  \Gamma^{\mathcal S}\left(\mathcal Z, \Sym^j\mathcal N_{\mathcal Z|\mathcal X}^* \otimes \mathcal E\big|_\mathcal Z\right) \cong
  \left(\sgn^j|\cdot|^{k+j+1}\right)\rtimes \left(\mathcal{F}_{\bar\psi}\left(\wt\pi_k\right)\big|_{\Sp(W_k)}\right).
\]
Recall that in the construction of $\pi^\vee_{k+1}$ (see \eqref{Theta tower O to Sp}), all ``exponents'' in the standard module of $\pi^\vee_{k+1}$ are bounded above by $k$, hence we are in a situation that \Cref{vanishing lemma} is applicable. It follows that
\begin{equation*}%\label{Vanishing bessel boundary}
  \Hom_{\Sp(W_{k+1})}\left(\left(\sgn^j|\cdot|^{k+j+1}\right)\rtimes \left(\mathcal{F}_{\bar\psi}\left(\wt\pi_k\right)\big|_{\Sp(W_k)}\right), \pi_{k+1}^{\vee}\right) = 0
\end{equation*}
for all $j \in \mathbb N$. We conclude that $\Hom_{\Sp(W_{k+1})}\left(\Gamma^{\mathcal S}_{\mathcal Z}\left(\mathcal X,\mathcal E\right), \pi_{k+1}^{\vee}\right)=0$ and our claim holds.

\vskip 5pt

Now, to analyze the contribution of the open orbit, we choose the representative $\left(Y, f_{k+1}\right)\in \mathcal{U}$, where $f_{k+1}$ is regarded as a linear functional on $Y$ through the pairing on $W_{k+1}$. By \eqref{embedding J to Sp} we have
\[
  \Stab_{\Sp(W_{k+1})}\left(Y, f_{k+1}\right)\cong \J\left(W_k\right).
\]
Hence $\mathcal E\big|_{\mathcal U}$ is isomorphic to a $\Sp(W_{k+1})$-equivariant bundle over $\mathcal U$ whose fiber representation at $\left(Y, f_{k+1}\right)$ is
\[
  \left(|\cdot|^{k+\frac{1}{2}}\boxtimes\mathcal{F}_{\bar\psi}\left(\wt \pi_{k}\right)\right)\otimes \delta_{P_{Y}}^{\frac{1}{2}}\big|_{\J(W_k)}
  \cong \mathcal{F}_{\bar\psi}\left(\wt \pi_{k}\right).
\]
Therefore
\[
  \Gamma^\mathcal{S}(\mathcal U, \mathcal E) \cong \unind_{\J(W_{k})}^{\Sp(W_{k+1})}\mathcal{F}_{\bar\psi}\left(\wt \pi_{k}\right).
\]
By \Cref{Frobenious}, one has
\begin{equation*}%\label{contribution open FJ}
\begin{split}
  \Hom_{\Sp(W_{k+1})}\left(\Gamma^\mathcal{S}(\mathcal U, \mathcal E), \pi_{k+1}^{\vee}\right)
  &\cong \Hom_{\Sp(W_{k+1})}\left(\unind_{\J(W_{k})}^{\Sp(W_{k+1})}\mathcal{F}_{\bar\psi}\left(\wt \pi_{k}\right),\pi_{k+1}^{\vee}\right)\\
  &\cong \Hom_{\Sp(W_{k+1})}\left(\pi_{k+1}\otimes\unind_{\J(W_{k})}^{\Sp(W_{k+1})}\mathcal{F}_{\bar\psi}\left(\wt \pi_{k}\right), \mathbbm{1}_{\Sp(W_{k+1})}\right)\\ 
  &\cong \Hom_{\J(W_{k})}\left(\pi_{k+1}\big|_{\J(W_k)}\otimes\mathcal{F}_{\bar\psi}\left(\wt \pi_{k}\right), \mathbbm{1}_{\J(W_{k})}\right).
\end{split}
\end{equation*}
Note that the last Hom space can be rewritten as
\[
\Hom_{\J(W_{k})}\left(\pi_{k+1}\big|_{\J(W_k)}\otimes\mathcal{F}_{\bar\psi}\left(\wt \pi_{k}\right), \mathbbm{1}_{\J(W_{k})}\right)
  \cong \Hom_{\J(W_{k})}\left(\left(\pi_{k+1}\big|_{\J(W_k)}\right)_{Z,\psi}\otimes\left(\overline{\omega}_\psi\otimes\wt\pi_{k}\right), \mathbbm{1}_{\J(W_{k})}\right)
\]
\begin{equation*}%\label{contribution open FJ}
\begin{split}
  &\cong \Hom_{\Mp(W_{k})}\left(\left(\left(\pi_{k+1}\big|_{\J(W_k)}\right)_{Z,\psi}\otimes\overline{\omega}_\psi\right)_{\Hei(W_k)}\otimes\wt\pi_{k}, \mathbbm{1}_{\Mp(W_{k})}\right) \\
  &\cong \Hom_{\Mp(W_{k})}\left(\mathcal{G}_{\psi}\left(\left(\pi_{k+1}\big|_{\J(W_k)}\right)_{Z,\psi}\right)\otimes\left(\omega_\psi\otimes\overline{\omega}_\psi\right)_{\Hei(W_k)}\otimes\wt\pi_{k}, \mathbbm{1}_{\Mp(W_{k})}\right).
\end{split}
\end{equation*}
Here, the subscript ${}_{Z,\psi}$ means taking the maximal quotient such that the center $Z$ of the Heisenberg group $\Hei(W_k)$ acts by $\psi$, and in the last isomorphism, we make use of \cite[Proposition 4.2]{MR2891317}. An easy computation shows that $\left(\omega_\psi\otimes\overline{\omega}_\psi\right)_{\Hei(W_k)}\cong\BC$. Therefore the last Hom space above is isomorphic to
\[
\begin{split}
\Hom_{\Mp(W_{k})}\left(\mathcal{G}_{\psi}\left(\left(\pi_{k+1}\big|_{\J(W_k)}\right)_{Z,\psi}\right)\otimes\wt\pi_{k}, \mathbbm{1}_{\Mp(W_{k})}\right) 
  & \cong \Hom_{\Mp(W_{k})}\left(\mathcal{G}_{\psi}\left(\left(\pi_{k+1}\big|_{\J(W_k)}\right)_{Z,\psi}\right),\wt\pi_{k}^\vee\right)\\
  & \cong \Hom_{\J(W_{k})}\left(\pi_{k+1}\big|_{\J(W_k)}, \mathcal{F}_{\psi}\left(\wt \pi_{k}^\vee\right)\right).
\end{split}
\]
Our claim in the previous paragraph then implies that
\begin{equation}\label{co-rank 2-0}
  \Hom_{\J(W_{k})}\left(\pi_{k+1}, \mathcal{F}_{\psi}\left(\wt\pi_{k}^\vee\right)\right)\neq 0.
\end{equation}
As explicated in \eqref{fj MODEL rewrite II}, this Hom space can be rephrased as a corank two Fourier-Jacobi model.

\vskip 5pt

Next we reduce the corank two model above back to the basic case by appealing to the Mackey theory again. Applying the MVW-involution in \Cref{thm MVW} to $\pi_{k+1}^\vee$ constructed in \eqref{Theta tower O to Sp}, we deduce that
\begin{equation*}%\label{Theta tower O to Sp 2}
  \pi_{k+1} \cong LQ\left(|\cdot|^{k}\times\cdots\times |\cdot|^{1}\rtimes\pi_1\right).
\end{equation*}
In particular, there is a surjection 
\[
    |\cdot|^k\rtimes\pi_k \twoheadrightarrow \pi_{k+1}.
\]
%We now appeal to the Mackey theory again to analyze \eqref{co-rank 2-0}.
Substituting above surjection into \eqref{co-rank 2-0}, we deduce that
\begin{equation}\label{FJ corank 2}
    \Hom_{\J(W_{k})}\left(|\cdot|^k\rtimes\pi_k, \mathcal{F}_{\psi}\left(\wt\pi_{k}^\vee\right)\right)\neq 0.
\end{equation}
Let $\mathcal X' = \Sp(W_{k+1})/Q_{Y}$ be the flag variety of $\Sp(W_{k+1})$ consisting of all isotropic lines in $W_{k+1}$, and let $\mathcal E'$ be the $\Sp(W_{k+1})$-equivariant tempered vector bundle whose fiber representation at $Y\in\mathcal{X}'$ is
\[
  \left(|\cdot|^{k} \boxtimes \pi_{k}\right)\otimes \delta_{Q_{Y}}^{\frac{1}{2}}.
\]
Then one has
\[
  \Gamma^\mathcal{S}(\mathcal X', \mathcal E')\cong |\cdot|^{k} \rtimes \pi_{k}
\]
as $\Sp(W_{k+1})$-representations. There are three $\J(W_k)$-orbits on $\mathcal X'$:
\begin{itemize}
  \item the closed orbit $\mathcal Z'$, consisting of the line $Y$, which is a single point;

  \vskip 5pt

  \item the middle orbit $\mathcal M'$, consisting of isotropic lines in $Y^\perp\setminus Y$;
  
  \vskip 5pt
  
  \item the open orbit $\mathcal U'$, consisting of isotropic lines in $W_{k+1}\setminus Y^\perp$.
\end{itemize}
\vskip 5pt
Set $\mathcal V' = \mathcal U'\cup \mathcal M'$. Then there is a $\J(W_k)$-equivariant short exact sequence
\begin{equation*}%\label{exact V'k1}
  0\longrightarrow
  \Gamma^\mathcal{S}\left(\mathcal V', \mathcal E'\right)
  \longrightarrow
  \Gamma^\mathcal{S}\left(\mathcal X',\mathcal E'\right)
  \longrightarrow
  \Gamma^\mathcal{S}_{\mathcal Z'}\left(\mathcal X',\mathcal E'\right)
  \longrightarrow 0,
\end{equation*}
where $\Gamma^\mathcal{S}_{\mathcal Z'}(\mathcal X',\mathcal E') \cong \Gamma^\mathcal{S}(\mathcal X',\mathcal E')/\Gamma^\mathcal{S}(\mathcal V', \mathcal E')$. This induces an exact sequence of Hom spaces
\begin{equation*}%\label{long exact sequence FJ 2}
  \Hom_{\J(W_{k})}\left(\Gamma^\mathcal{S}_{\mathcal Z'}(\mathcal X',\mathcal E'),
    \mathcal{F}_\psi(\wt\pi_k^\vee)\right)
  \longrightarrow
  \Hom_{\J(W_{k})}\left(\Gamma^\mathcal{S}\left(\mathcal X',\mathcal E'\right),\mathcal{F}_\psi(\wt\pi_k^\vee)\right)
  \longrightarrow
  \Hom_{\J(W_{k})}\left(\Gamma^\mathcal{S}(\mathcal V', \mathcal E'), \mathcal{F}_\psi(\wt\pi_k^\vee)\right).
\end{equation*}
By \Cref{P:BorelLemma}, there is a filtration on $\Gamma^\mathcal{S}_{\mathcal Z'}(\mathcal X',\mathcal E')$.
One checks easily that the center $Z(\J(W_k))$ acts trivially on each graded piece
\begin{equation*}%\label{closed graded FJ}
  \Gamma^\mathcal{S}\left(
    \mathcal{Z'}, \Sym^j\mathcal{N}_{\mathcal{Z'}|\mathcal{X'}}^*\otimes\mathcal{E'}\big|_\mathcal{Z'}\right),
\end{equation*}
for all $j\in\mathbb{N}$. On the other hand, the center $Z(\J(W_k))$ acts on $\mathcal{F}_\psi\left(\wt\pi_k^\vee\right)$ by the non-trivial character $\psi$. By comparing central characters, we obtain that
\[
  \Hom_{\J(W_{k})}\left(\Gamma^\mathcal{S}_{\mathcal Z'}(\mathcal X',\mathcal E'),
    \mathcal{F}_\psi(\wt\pi_k^\vee)\right) = 0.
\]
Together with \eqref{FJ corank 2} and above exact sequence of Hom spaces, this implies
\begin{equation}\label{FJ second piece}
  \Hom_{\J(W_{k})}\left(\Gamma^\mathcal{S}(\mathcal V', \mathcal E'), \mathcal{F}_\psi(\wt\pi_k^\vee)\right) \neq 0.
\end{equation}
The decomposition $\mathcal V'=\mathcal M'\cup \mathcal U'$ also yields a $\J(W_k)$-equivariant short exact sequence
\begin{equation*}%\label{exact V'k1}
  0\longrightarrow
  \Gamma^\mathcal{S}\left(\mathcal U', \mathcal E'\right)
  \longrightarrow
  \Gamma^\mathcal{S}\left(\mathcal V',\mathcal E'\right)
  \longrightarrow
  \Gamma^\mathcal{S}_{\mathcal M'}\left(\mathcal V',\mathcal E'\right)
  \longrightarrow 0,
\end{equation*}
where $\Gamma^\mathcal{S}_{\mathcal M'}(\mathcal V',\mathcal E')\cong \Gamma^\mathcal{S}(\mathcal V',\mathcal E')/\Gamma^\mathcal{S}(\mathcal U', \mathcal E')$, and hence induce another exact sequence of Hom spaces
\begin{equation*}%\label{long exact sequence FJ 3}
  \Hom_{\J(W_{k})}\left(
    \Gamma^\mathcal{S}_{\mathcal M'}(\mathcal V',\mathcal E'),
    \mathcal{F}_\psi(\wt\pi_k^\vee)
  \right)
  \longrightarrow
  \Hom_{\J(W_{k})}\left(
    \Gamma^\mathcal{S}(\mathcal V',\mathcal E'),
    \mathcal{F}_\psi(\wt\pi_k^\vee)
 \right)
  \\
  \longrightarrow
  \Hom_{\J(W_{k})}\left(
    \Gamma^\mathcal{S}(\mathcal U', \mathcal E'),
    \mathcal{F}_\psi(\wt\pi_k^\vee)
  \right).
\end{equation*}
Once again we claim that
\begin{equation}\label{contribution open FJIII}
  \Hom_{\J(W_{k})}\left(
    \Gamma^\mathcal{S}\left(\mathcal U', \mathcal E'\right),
    \mathcal{F}_\psi\left(\wt\pi_k^\vee\right)
  \right)\neq 0.
\end{equation}
Note that by \eqref{FJ second piece} and above exact sequence, to show our claim, it suffices to show that
\[
    \Hom_{\J(W_{k})}\left(
    \Gamma^\mathcal{S}_{\mathcal M'}(\mathcal V',\mathcal E'),
    \mathcal{F}_\psi(\wt\pi_k^\vee)
  \right)=0.
\]
We now analyze $\Gamma^\mathcal{S}_{\mathcal M'}(\mathcal V',\mathcal E')$ using Borel's lemma \Cref{P:BorelLemma}. We fix two vectors $e_{k},f_{k}\in W_{k}$, such that they form a (symplectic) hyperbolic plane, and we have a decomposition
\[
  W_{k} = \left\langle e_{k}\right\rangle \oplus W_{k-1} \oplus \left\langle f_{k}\right\rangle.
\]
Choose the representative $Y'=\left\langle e_k\right\rangle\in \mathcal M'$. By \eqref{Parabolic compatible}, we know that $P_{Y'}\coloneqq \Stab_{\J(W_k)}\left(Y'\right)$ is a ``parabolic subgroup'' of $\J(W_k)$
with ``Levi quotient'' $M_{Y'}\cong \GL_1(\BR)\times \J(W_{k-1})$. By computations similar to those in the previous case, one obtains
\[
  \Gamma^\mathcal{S}\left(\mathcal{M'},\mathcal{E'}\big|_\mathcal{M'}\right)
  \cong
  |\cdot|^{k+\frac{1}{2}}\rtimes \left(\pi_{k}\big|_{\J(W_{k-1})}\right).
\]
On the other hand, the complexified conormal bundle $\mathcal N_{\mathcal{M'}|\mathcal{V'}}^*$ is a $\J(W_k)$-equivariant bundle over $\mathcal M'$ whose fiber representation at $Y'$ is $\sgn|\cdot|\boxtimes\mathbbm{1}_{\J(W_{k-1})}$. By \Cref{P:BorelLemma}, there is a $\J(W_k)$-equivariant filtration on $\Gamma^\mathcal{S}_{\mathcal M'}(\mathcal V',\mathcal E')$, with graded pieces
\begin{equation}\label{FJ graded}
  \Gamma^\mathcal{S}\left(
    \mathcal{M'}, \Sym^j\mathcal{N}_{\mathcal{M'}|\mathcal{V'}}^*\otimes\mathcal{E'}\big|_\mathcal{M'}
  \right)
  \cong
  \left(\sgn^j|\cdot|^{k+j+\frac{1}{2}}\right)\rtimes \left(\pi_{k}\big|_{\J(W_{k-1})}\right)
  \quad \mbox{for } j\in\mathbb{N}.
\end{equation}
Using \Cref{vanishing lemma} together with \eqref{Theta tower O to Mp}, we deduce that
\[
  \Hom_{\J(W_{k})}\left(
    \left(\sgn^j|\cdot|^{k+j+\frac{1}{2}}\right)\rtimes \left(\pi_{k}\big|_{\J(W_{k-1})}\right),
    \mathcal{F}_\psi(\wt\pi_k^\vee)
  \right)=0
  \quad\text{for all } j\in\mathbb{N},
\]
and therefore $\Hom_{\J(W_{k})}\left(\Gamma^\mathcal{S}_{\mathcal M'}(\mathcal V',\mathcal E'), \mathcal{F}_\psi(\wt\pi_k^\vee) \right)=0$ as desired.

\vskip 5pt

Finally, choose the representative $\left\langle f_{k+1}\right\rangle \in \mathcal U'$. Then $\Stab_{\J(W_k)}(\langle f_{k+1}\rangle)\cong \Sp(W_k)$ and
\[
  \Gamma^\mathcal{S}\left(\mathcal U', \mathcal E'\right)
  \cong
  \unind_{\Sp(W_{k})}^{\J(W_{k})}\left(\left(
    \big(|\cdot|^{k} \boxtimes \pi_{k}\big)\otimes \delta_{Q_{Y'}}^{\frac{1}{2}}\right)\Big|_{\Sp(W_k)}
  \right)
  \cong \unind_{\Sp(W_{k})}^{\J(W_{k})}  \pi_{k}.
\]
Note that there is a natural non-degenerate $\J(W_k)$-invariant pairing 
\[
    \mathcal{F}_\psi\left(\wt\pi_k^\vee\right) \times \mathcal{F}_{\bar\psi}\left(\wt\pi_k\right) \longrightarrow \BC.
\]
By using this pairing and \Cref{Frobenious}, we have
\begin{equation}\label{contribution open FJII}
\begin{split}
  \Hom_{\J(W_k)}\left(
    \Gamma^\mathcal{S}\left(\mathcal U',\mathcal E'\right),
    \mathcal{F}_\psi\left(\wt\pi_k^\vee\right) 
  \right)
  &\cong
  \Hom_{\J(W_k)}\left(
    \unind_{\Sp(W_{k})}^{\J(W_{k})}\pi_{k},
    \mathcal{F}_\psi\left(\wt\pi_k^\vee\right)\right)\\
  &\hookrightarrow
  \Hom_{\J(W_{k})}\left(
  \mathcal{F}_{\bar\psi}\left(\wt\pi_k\right)\otimes\unind_{\Sp(W_{k})}^{\J(W_{k})}\pi_{k}, 
  \mathbbm{1}_{\J(W_k)}\right)\\
  &\cong 
  \Hom_{\Sp(W_{k})}\left(
  \mathcal{F}_{\bar\psi}\left(\wt\pi_k\right)\big|_{\Sp(W_k)}\otimes\pi_{k}, 
  \mathbbm{1}_{\Sp(W_k)}\right)
\end{split}
\end{equation}
Together with \eqref{contribution open FJIII}, this shows that
\[
  \Hom_{\Sp(W_{k})}\left(
    \wt \pi_{k}\otimes \overline{\omega}_{\psi}, \pi_{k}^\vee
  \right)\neq 0,
\]
which completes the proof.
\end{proof}

\vskip 5pt

Applying \Cref{L:ind2} iteratively, we conclude that  $\Hom_{\Sp(W)}\left(\wt \pi\boxtimes\overline{\omega}_{\psi}, \pi^\vee\right) \neq 0$. This completes the proof of \Cref{P:RtoL}.

\vskip 10pt

\appendix
\bibliographystyle{alpha}
\bibliography{CCZ}
\end{document}